\newcommand{\kibitz}[2]{\ifnum\Comments=1\textcolor{#1}{#2}\fi}
   \def\MR#1{}
\newtheoremstyle{normal}
{2ex}               
{3ex}               
{}                  
{}                  
{\bfseries} 
{}                  
{2pt}   
{\thmname{#1}\thmnumber{ #2.} \thmnote{(#3)}}
\newtheoremstyle{italic}
{2ex}
{3ex}
{\itshape}
{}
{\bfseries} 
{}
{2pt}
{\thmname{#1}\thmnumber{ #2.} \thmnote{(#3)}}
\theoremstyle{normal}
\newtheorem{definition}{Definition}[section]
\newtheorem{remark}[definition]{Remark}
\newtheorem{condition}[definition]{Condition}
\theoremstyle{italic}
\newtheorem{theorem}[definition]{Theorem}
\newtheorem{lemma}[definition]{Lemma}
\newtheorem{prop}[definition]{Proposition}
\newtheorem{add}[definition]{Addendum}
\newcommand\G{\mathbb{G}}
\newcommand\N{\mathbb{N}}
\newcommand\R{\mathbb{R}}
\newcommand\Z{\mathbb{Z}}
\newcommand\eps{\varepsilon}
\newcommand\Prob{\mathbb{P}}    
\newcommand\Exp{\mathbb{E}}     
\newcommand\ind{\mathds{1}}     
\newcommand{\1}{\ind}
\newcommand\Ac{\mathcal{A}}
\newcommand\Fc{\mathcal{F}}
\newcommand\Hc{\mathcal{H}}
\newcommand\Nc{\mathcal{N}}
\newcommand\Xc{\mathcal{X}}
\newcommand\Eb{\mathbb{E}}
\newcommand\Gb{\mathbb{G}}
\newcommand\Hb{\mathbb{H}}
\newcommand{\Gn}{\mathbb{G}_n}
\newcommand{\Pn}{\mathbb{P}_n}
\DeclareMathOperator{\var}{var}
\DeclareMathOperator{\Var}{Var}
\DeclareMathOperator{\cov}{cov}
\newcommand{\Frechet}{\operatorname{\text{Fr\'echet}}}
\newcommand{\argmax}{\operatornamewithlimits{\arg\max}}
\newcommand{\diff}{\mathrm{d}}
\newcommand{\iu}{\mathrm{i}}
\newcommand{\mom}{\nu}
\newcommand\weak{\rightsquigarrow}
\newcommand{\norm}[1]{\lVert{#1}\rVert}
\newcommand{\abs}[1]{\left\lvert{#1}\right\rvert}
\newcommand{\floor}[1]{\lfloor{#1}\rfloor}
\newcommand{\ip}[1]{\lfloor #1 \rfloor}
\newcommand{\sumk}{\frac{1}{k}\sum_{i=1}^k}
\numberwithin{equation}{section}
\begin{document}

\begin{frontmatter}

\title{Maximum likelihood estimation for the Fr\'echet distribution based on block maxima extracted from a time series}
\runtitle{Maximum likelihood estimation for the Fr\'echet distribution}

\begin{aug}
  \author{\fnms{Axel} \snm{B\"ucher}\thanksref{a1}\ead[label=e1]{axel.buecher@ruhr-uni-bochum.de}}
  \and
  \author{\fnms{Johan} \snm{Segers}\thanksref{a2}\ead[label=e2]{johan.segers@uclouvain.be}}

  \runauthor{A.\ B\"ucher and J.\ Segers}

  \affiliation{Ruhr-Universit\"at Bochum and Universit\'{e} catholique de Louvain}

  \address[a1]{Fakult\"at f\"ur Mathematik, Ruhr-Universit\"at Bochum, Universit\"atsstr.~150, 44780 Bochum, Germany. \printead{e1}}

  \address[a2]{Universit\'{e} catholique de Louvain,
	Institut de Statistique, Biostatistique et Sciences Actuarielles,
	Voie du Roman Pays~20,
	B-1348 Louvain-la-Neuve, Belgium. \printead{e2}}

\end{aug}

\begin{abstract}
The block maxima method in extreme-value analysis proceeds by fitting an extreme-value distribution to a sample of block maxima extracted from an observed stretch of a time series. The method is usually validated under two simplifying assumptions: the block maxima should be distributed exactly according to an extreme-value distribution and the sample of block maxima should be independent. Both assumptions are only approximately true. The present paper validates that the simplifying assumptions can in fact be safely made.

For general triangular arrays of block maxima attracted to the Fr\'echet distribution, consistency and asymptotic normality is established for the maximum likelihood estimator of the parameters of the limiting Fr\'echet distribution. The results are specialized to the common setting of block maxima extracted from a strictly stationary time series. The case where the underlying random variables are independent and identically distributed is further worked out in detail. The results are illustrated by theoretical examples and Monte Carlo simulations.
\end{abstract}

\begin{keyword}
\kwd{block maxima method}
\kwd{maximum likelihood estimation}
\kwd{asymptotic normality} 
\kwd{heavy tails}
\kwd{triangular arrays} 
\kwd{stationary time series}
\end{keyword}

\end{frontmatter}

\section{Introduction}

For the analysis of extreme values, two fundamental approaches can be distinguished. First, the \textit{peaks-over-threshold method} consists of extracting those values from the observation period which exceed a high threshold. To model such threshold excesses, asymptotic theory suggests the use of the Generalized Pareto distribution \citep{Pic75}. Second, the \textit{block maxima method} consists of dividing the observation period into a sequence of non-overlapping intervals and restricting attention to the largest observation in each time interval. Thanks to the extremal types theorem, the probability distribution of such block maxima is approximately Generalized Extreme-Value (GEV), popularized by \cite{Gum58}. The block maxima method is particularly common in environmental applications, since appropriate choices of the block size yield a simple but effective way to deal with seasonal patterns.

For both methods, honest theoretical justifications must take into account two distinct features. First, the postulated models for either threshold excesses or block maxima arise from asymptotic theory and are not necessarily accurate at sub-asymptotic thresholds or at finite block lengths. Second, if the underlying data exhibit serial dependence, then the same will likely be true for the extreme values extracted from those data.

How to deal with both issues is well-understood for the peaks-over-threshold method. The model approximation can be justified under a second-order condition (see, e.g., \citealp{dHF06} for a vast variety of applications), while serial dependence is taken care of in \cite{Hsi91,Dre00} or \cite{Roo09}, among others. Excesses over large thresholds often occur in clusters, and such serial dependence usually has an impact on the asymptotic variances of estimators based on these threshold excesses.

Surprisingly, perhaps, is that for the block maxima method, no comparable analysis has yet been done. With the exception of some recent articles, which we will discuss in the next paragraph, the commonly used assumption is that the block maxima constitute an independent random sample from a GEV distribution. The heuristic justification for assuming independence over time, even for block maxima extracted from time series data, is that for large block sizes, the occurrence times of the consecutive block maxima are likely to be well separated.

A more accurate framework is that of a triangular array of block maxima extracted from a sequence of random variables, the block size growing with the sample size. While \cite{Dom15} shows consistency of the maximum likelihood estimator \citep{prescott+w:1980} for the parameters of the GEV distribution, \cite{dHF15} show both consistency and asymptotic normality of the probability weighted moment estimators \citep{hosking+w+w:1985}. In both papers, however, the random variables from which the block maxima are extracted are supposed to be independent and identically distributed. In many situations, this assumption is clearly violated. To the best of our knowledge, \cite{BucSeg14} is the only reference treating both the approximation error and the time series character, providing large-sample theory of nonparametric estimators of extreme-value copulas based on samples of componentwise block maxima extracted out of multivariate stationary time series.

The aim of the paper is to show the consistency and asymptotic normality of the maximum likelihood estimator for more general sampling schemes, including the common situation of extracting block maxima from an underlying stationary time series. For technical reasons explained below, we restrict attention to the heavy-tailed case. The block maxima paradigm then suggests to use the two-parametric Fr\'echet distribution as a model for a sample of block maxima extracted from that time series.

The first (quite general) main result, Theorem~\ref{theo:asydis}, is that for triangular arrays of random variables whose empirical measures, upon rescaling, converge in an appropriate sense to a Fr\'echet distribution, the maximum likelihood estimator for the Fr\'echet parameters based on those variables is consistent and asymptotically normal. The theorem can be applied to the common set-up discussed above of block maxima extracted from an underlying time series, and the second main result, Theorem~\ref{theo:asydis:stat}, shows that, in this case, the asymptotic variance matrix is the inverse of the Fisher information of the Fr\'echet family: asymptotically, it is as if the data were an independent random sample from the Fr\'echet attractor. In this sense, our theorem confirms the soundness of the common simplifying assumption that block maxima can be treated as if they were serially independent. 
Interestingly enough, the result  allows for time series of which the strong mixing coefficients are not summable, allowing for some long range dependence~scenarios.


Restricting attention to the heavy-tailed case is done because of the non-standard nature of the three-parameter GEV distribution. The issue is that the support of a GEV distribution depends on its parameters. Even for the maximum likelihood estimator based on an independent random sample from a GEV distribution, asymptotic normality has not yet been established. The article usually cited in this context is \cite{Smi85}, although no formal result is stated therein. Even the differentiability in quadratic mean of the three-parameter GEV is still to be proven; \cite{marohn:1994} only shows differentiability in quadratic mean for the one-parameter GEV family (shape parameter only) at the Gumbel distribution. We feel that solving all issues simultaneously (irregularity of the GEV model, finite block size approximation error and serial dependence) is a far too ample program for one paper. For that reason, we focus on the analytically simpler Fr\'echet family, while thoroughly treating the triangular nature of the array of block maxima and the issue of serial dependence within the underlying time series. In a companion paper \citep{BucSeg16}, we consider the maximum likelihood estimator in the general GEV-model based on independent and identically distributed random variables sampled directly from the GEV distribution. The main focus of that paper is devoted to resolving the considerable technical issues arising from the dependence of the GEV support on its parameters.

We will build up the theory in three stages. First, we consider general triangular arrays of observations that asymptotically follow  a Fr\'echet distribution in Section~\ref{sec:tri}. Second, we apply the theory to the set-up of block maxima extracted from a strictly stationary time series in Section~\ref{sec:stat}. Third, we further specialize the results to the special case of block maxima formed from independent and identically distributed random variables in Section~\ref{sec:iid}. This section can hence be regarded as a continuation of \cite{Dom15} by reinforcing consistency to asymptotic normality, albeit for the Fr\'echet domain of attraction only. We work out an example and present finite-sample results from a simulation study in Section~\ref{sec:ex}. The main proofs are deferred to Appendix~\ref{sec:proofs}, while some auxiliary results concerning the Fr\'echet distribution are mentioned in Appendix~\ref{sec:aux}. The proofs of the less central results are postponed to a supplement.

\section{Triangular arrays of block maxima}
\label{sec:tri}


In this section, we summarize results concerning the maximum likelihood estimator for the parameters of the Fr\'echet distribution:
given a sample of observations which are not all tied, the Fr\'echet likelihood admits a unique maximum (Subsection~\ref{subsec:tri:existsUnique}). If the observations are based on a triangular array which is approximately Fr\'echet distributed  in the sense that certain functionals admit a weak law of large numbers or a central limit theorem, the maximum likelihood estimator is consistent or asymptotically normal, respectively (Subsections~\ref{subsec:tri:consistency} and~\ref{subsec:tri:asydis}). 
Proofs  are given in Subsection~\ref{subsec:proofs:tri}.

\subsection{Existence and uniqueness}
\label{subsec:tri:existsUnique}

Let $P_{\theta}$ denote the two-parameter Fr\'echet distribution on $(0, \infty)$ with parameter $\theta = (\alpha, \sigma) \in (0, \infty)^2 = \Theta$, defined through its cumulative distribution function 
\[
  G_\theta(x) = \exp \{ - (x/\sigma)^{-\alpha} \}, \quad x>0.
\]
Its probability density function is equal to
\[
  p_\theta(x) = \frac{\alpha}{\sigma} \exp \{ - (x/\sigma)^{-\alpha} \} \, (x/\sigma)^{-\alpha-1}, \quad x>0,
\]
with log-likelihood function
\[
  \ell_\theta(x) = \log(\alpha/\sigma) - (x/\sigma)^{-\alpha} - (\alpha+1) \log(x/\sigma), \quad x>0,
\]
and score functions $\dot \ell_\theta=(\dot \ell_{\theta,1}, \dot \ell_{\theta,2})^T$, with
\begin{align}
\label{eq:score:alpha}
  \dot \ell_{\theta,1}(x) &= \partial_\alpha \ell_\theta(x)
  = \alpha^{-1} + \left( (x/\sigma)^{-\alpha} - 1 \right) \log(x/\sigma) , \\
\label{eq:score:sigma}
   \dot \ell_{\theta,2}(x) &=\partial_\sigma \ell_\theta(x)
  = \left( 1 - (x/\sigma)^{-\alpha} \right) \alpha/\sigma.
\end{align}

Let $\bm{x} = (x_1, \ldots, x_k) \in (0, \infty)^k$ be a sample vector to which the Fr\'echet distribution is to be fitted. Consider the log-likelihood function
\begin{equation}
\label{eq:loglik}
  L( \theta \mid \bm{x} ) = \sum_{i=1}^k \ell_\theta( x_i ).
\end{equation}
Further, define
\begin{align}
\label{eq:Psik}
  \Psi_k( \alpha \mid \bm{x} )
  &= \frac{1}{\alpha} + \frac{\sumk x_i^{-\alpha} \log( x_i )}{\sumk x_i^{-\alpha}} - \sumk \log(x_i), \\
\label{eq:sigmahat}
  {\sigma}( \alpha \mid \bm{x} ) 
  &= \left( \frac{1}{k} \sum_{i=1}^k x_i^{-\alpha} \right)^{-1/\alpha}.
\end{align}

\begin{lemma}[Existence and uniqueness]
\label{lem:existsUnique}
If the scalars $x_1, \ldots, x_k \in (0, \infty)$ are not all equal ($k \ge 2$), then there exists a unique maximizer
\[ 
  \hat{\theta}( \bm{x} ) 
  =
  \bigl( \hat{\alpha}( \bm{x} ),\hat \sigma(\bm x) \bigr)
  = \argmax_{\theta \in \Theta} L( \theta \mid \bm{x} ). 
\]
We have $\hat \sigma(\bm x) = \sigma( \hat{\alpha}(\bm{x}) \mid \bm{x} )$ while $\hat{\alpha}( \bm{x} )$ is the unique zero of the strictly decreasing function $\alpha \mapsto \Psi_k( \alpha \mid \bm{x})$:
\begin{equation}
\label{eq:estimEq}
  \Psi_k \bigl( \hat{\alpha}( \bm{x} ) \mid \bm{x} \bigr) = 0.
\end{equation}
\end{lemma}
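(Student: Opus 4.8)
The plan is to compute the profile log-likelihood with respect to $\sigma$ for fixed $\alpha$, reduce the two-dimensional maximization to a one-dimensional one in $\alpha$, and then analyze the resulting score equation $\Psi_k(\alpha \mid \bm x) = 0$ directly. First I would fix $\alpha \in (0,\infty)$ and maximize $\theta \mapsto L(\theta \mid \bm x)$ over $\sigma$ alone. Setting $\partial_\sigma L = \sum_{i=1}^k \dot\ell_{\theta,2}(x_i) = 0$ and using \eqref{eq:score:sigma}, the first-order condition becomes $\sum_{i=1}^k \bigl(1 - (x_i/\sigma)^{-\alpha}\bigr) = 0$, i.e.\ $\sigma^\alpha = \frac{1}{k}\sum_{i=1}^k x_i^{-\alpha}$ inverted, which gives exactly $\sigma = \sigma(\alpha \mid \bm x)$ as in \eqref{eq:sigmahat}. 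A quick check of the second derivative (or of the behaviour of $L$ as $\sigma \downarrow 0$ and $\sigma \to \infty$, where $L \to -\infty$ since the $(x/\sigma)^{-\alpha}$ term or the $\log(x/\sigma)$ term dominates) shows this critical point is the unique global maximizer in $\sigma$. Hence $\sup_{\sigma} L((\alpha,\sigma) \mid \bm x) = L((\alpha, \sigma(\alpha\mid\bm x)) \mid \bm x) =: \tilde L(\alpha)$, and any global maximizer $\hat\theta(\bm x)$ must satisfy $\hat\sigma(\bm x) = \sigma(\hat\alpha(\bm x) \mid \bm x)$.

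Next I would differentiate the profile log-likelihood $\tilde L(\alpha)$ in $\alpha$. By the envelope theorem (the $\sigma$-derivative vanishes at $\sigma(\alpha\mid\bm x)$), $\tilde L'(\alpha) = \sum_{i=1}^k \dot\ell_{\theta,1}(x_i)$ evaluated at $\theta = (\alpha, \sigma(\alpha\mid\bm x))$. Plugging \eqref{eq:score:alpha} in and substituting $\sigma(\alpha\mid\bm x)^{-\alpha} = \frac{1}{k}\sum_{i=1}^k x_i^{-\alpha}$, the terms involving $\log \sigma$ cancel against each other and one is left, after dividing by $k$, precisely with $\Psi_k(\alpha \mid \bm x)$ as defined in \eqref{eq:Psik}. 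Thus a maximizer of $\tilde L$ is a zero of $\Psi_k(\,\cdot \mid \bm x)$, and conversely.

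The main obstacle — and the heart of the argument — is to show that $\alpha \mapsto \Psi_k(\alpha \mid \bm x)$ is \emph{strictly decreasing} on $(0,\infty)$, with $\Psi_k(\alpha\mid\bm x) \to +\infty$ as $\alpha\downarrow 0$ and $\Psi_k(\alpha\mid\bm x) \to$ a negative limit (or $-\infty$) as $\alpha\to\infty$, which together yield existence and uniqueness of the zero. The term $1/\alpha$ contributes a strictly decreasing piece, so it suffices to show that $\alpha \mapsto R(\alpha) := \frac{\sum_i x_i^{-\alpha}\log x_i}{\sum_i x_i^{-\alpha}} - \frac{1}{k}\sum_i \log x_i$ is non-increasing. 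The key observation is that $R(\alpha) = \frac{d}{d\alpha}\bigl(-\log \tfrac1k\sum_i x_i^{-\alpha}\bigr) - \overline{\log x}$ can be recognized as (the negative of) a cumulant-type quantity: writing $y_i = \log x_i$, the function $\alpha \mapsto -\log\frac1k\sum_i e^{-\alpha y_i}$ is concave (it is $-$ a log-sum-exp, which is convex), so its derivative $R(\alpha) + \overline{\log x}$ is non-increasing; equivalently, $R'(\alpha) = -\operatorname{Var}_{\alpha}(\log x_i) \le 0$, where $\operatorname{Var}_\alpha$ denotes the variance under the probability weights $w_i(\alpha) = x_i^{-\alpha}/\sum_j x_j^{-\alpha}$. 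This variance is strictly positive precisely because the $x_i$ are not all equal, giving strict monotonicity of $R$, hence of $\Psi_k$. For the boundary behaviour: as $\alpha\downarrow 0$ the $1/\alpha$ term forces $\Psi_k\to+\infty$ (the remaining part stays bounded, tending to $0$); as $\alpha\to\infty$ the weights $w_i(\alpha)$ concentrate on the indices attaining $\min_i x_i$, so $R(\alpha) \to \log(\min_i x_i) - \overline{\log x} < 0$ strictly (again because not all $x_i$ are equal), while $1/\alpha \to 0$, so $\Psi_k(\alpha\mid\bm x)$ converges to a strictly negative constant. By continuity and strict monotonicity there is a unique zero $\hat\alpha(\bm x)$, and since $\tilde L$ is then strictly increasing before it and strictly decreasing after, $\hat\alpha(\bm x)$ is the unique maximizer of $\tilde L$; combined with the first stage this produces the unique global maximizer $\hat\theta(\bm x) = (\hat\alpha(\bm x), \sigma(\hat\alpha(\bm x)\mid\bm x))$ of $L(\,\cdot\mid\bm x)$, as claimed.
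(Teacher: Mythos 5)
Your proposal is correct and follows essentially the same route as the paper: profile out $\sigma$ at $\sigma(\alpha\mid\bm x)$, identify the profile score with $k\,\Psi_k(\alpha\mid\bm x)$, establish strict monotonicity of $\Psi_k$, and read off the limits as $\alpha\downarrow 0$ and $\alpha\to\infty$ to get a unique zero and hence a unique maximizer. The only cosmetic difference is that you obtain monotonicity from concavity of $\alpha\mapsto-\log\frac1k\sum_i x_i^{-\alpha}$ (a weighted-variance argument), whereas the paper applies the Cauchy--Schwarz inequality to the explicit second derivative --- these are the same inequality in disguise.
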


It is easily verified that the estimating equation for $\alpha$ is scale invariant: for any $c \in (0, \infty)$, we have $\Psi_k( \alpha \mid c\bm{x} ) = \Psi_k( \alpha \mid \bm{x} )$. As a consequence, the maximum likelihood estimator for the shape parameter is scale invariant:
\[
  \hat{\alpha}( c\bm{x} ) = \hat{\alpha}(\bm{x}).
\]
Moreover, the estimator for $\sigma$ is a scale parameter in the sense that
\[
  \hat \sigma(c\bm x) = {\sigma}( \hat{\alpha}(c\bm{x}) \mid c\bm{x} )
  =
  c \, {\sigma}( \hat{\alpha}(\bm{x}) \mid \bm{x} )
  = 
  c \, \hat\sigma(\bm x).
\]

Until now, the maximum likelihood estimator is defined only in case not all $x_i$ values are identical. For definiteness, if $x_1 = \ldots = x_k$, define $\hat{\alpha}( \bm{x} ) = \infty$ and $\hat{\sigma}(\bm x ) = \min(x_1, \ldots, x_k) = x_1$.

\subsection{Consistency}
\label{subsec:tri:consistency}

We derive a general condition under which the maximum likelihood estimator for the parameters of the Fr\'echet distribution is consistent. The central result, Theorem~\ref{theo:consistency} below, shows that, apart from a not-all-tied condition, the only thing that is required for consistency is a weak law of large numbers for the functions appearing in the estimating equation \eqref{eq:estimEq} for the shape parameter.

Suppose that for each positive integer $n$, we are given a random vector $\bm{X}_n = (X_{n,1}, \ldots, X_{n,k_n})$ taking values in $(0, \infty)^{k_n}$, where $k_n \ge 2$ is a positive integer sequence such that $k_n \to \infty$ as $n \to \infty$. One may think of $X_{n,i}$ as being (approximately) Fr\'echet distributed with shape parameter $\alpha_0>0$ and scale parameter $\sigma_n>0$. This statement is made precise in Condition~\ref{cond:LLN} below.
On the event that the $k_n$ variables $X_{n,i}$ are not all equal, Lemma~\ref{lem:existsUnique} allows us to define
\begin{equation}
\label{eq:MLE:shape}
  \hat{\alpha}_n 
  = 
  \hat{\alpha}( \bm{X}_n ),
\end{equation}
the unique zero of the function $0 < \alpha \mapsto \Psi_{k_n}( \alpha \mid \bm{X}_n )$. Further, as in \eqref{eq:sigmahat}, put
\begin{equation}
\label{eq:MLE:scale}
  \hat{\sigma}_n 
  = 
  {\sigma}( \hat{\alpha}_n \mid \bm{X}_n ) 
  = 
  \left( \frac{1}{k_n} \sum_{i=1}^{k_n} X_{n,i}^{-\hat{\alpha}_n} \right)^{-1/\hat{\alpha}_n}.
\end{equation}
For definiteness, put $\hat{\alpha}_n = \infty$ and $\hat{\sigma}_n = X_{n,1}$ on the event $\{ X_{n,1} = \ldots = X_{n,k_n} \}$. Subsequently, we will assume that this event is asymptotically negligible:
\begin{equation}
\label{eq:noties}
  \lim_{n \to \infty} \Pr(X_{n,1} = \ldots = X_{n,k_n}) = 0.
\end{equation}
We refer to $(\hat{\alpha}_n, \hat{\sigma}_n)$ as the maximum likelihood estimator.


The fundamental condition guaranteeing consistency of the maximum likelihood estimator concerns the asymptotic behavior of sample averages of $f(X_{n,i}/\sigma_n)$ for certain functions $f$. 
For $0 < \alpha_- < \alpha_+ < \infty$, consider the function class
\begin{equation}
\label{eq:Falpha-+}
  \mathcal{F}_1(\alpha_-, \alpha_+)
  =
  \{ x \mapsto \log x\}
  \cup
  \{ x \mapsto x^{-\alpha} : \alpha_- < \alpha < \alpha_+ \}
  \cup
  \{ x \mapsto x^{-\alpha} \log x : \alpha_- < \alpha < \alpha_+ \},
\end{equation}
all functions being from $(0, \infty)$ into $\R$.
Let the arrow `$\weak$' denote weak convergence. 

\begin{condition}
\label{cond:LLN}
There exist $0 < \alpha_- < \alpha_0 < \alpha_+ < \infty$ and a positive sequence $(\sigma_n)_{n \in \N}$ such that, for all $f \in \mathcal{F}_1( \alpha_-, \alpha_+ )$,
\begin{equation}
\label{eq:LLN}
  \frac{1}{k_n} \sum_{i=1}^{k_n} f( X_{n,i}/\sigma_n )
  \weak
  \int_0^\infty f(x) \, p_{\alpha_0, 1}(x) \, \diff x,
  \qquad
  n \to \infty.
\end{equation}
\end{condition}


\begin{theorem}[Consistency]
\label{theo:consistency}
Let $\bm{X}_n = (X_{n,1}, \ldots, X_{n,k_n})$ be a sequence of random vectors in $(0, \infty)^{k_n}$, where $k_n \to \infty$. Assume that Equation~\eqref{eq:noties} and Condition~\ref{cond:LLN} hold. 
On the complement of the event $\{ X_{n,1} = \ldots = X_{n,k_n} \}$, the random vector $(\hat{\alpha}_n, \hat{\sigma}_n)$ is the unique maximizer of the log-likelihood $(\alpha, \sigma) \mapsto L(\alpha, \sigma \mid X_{n,1}, \ldots, X_{n,k_n})$. Moreover, the maximum likelihood estimator is consistent in the sense that
\[
  ( \hat{\alpha}_n, \hat{\sigma}_n / \sigma_n ) 
  \weak (\alpha_0, 1), \qquad n \to \infty.
\]
\end{theorem}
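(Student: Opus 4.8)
The plan is to pass to the rescaled array $Y_{n,i} := X_{n,i}/\sigma_n$, identify the deterministic limit of the random estimating function $\alpha \mapsto \Psi_{k_n}(\alpha \mid \bm{Y}_n)$, deduce consistency of $\hat\alpha_n$ from the fact that the zero of a strictly decreasing function is a continuous functional of it, and finally handle $\hat\sigma_n/\sigma_n$ by plugging the consistent $\hat\alpha_n$ into a sample average. The reduction to $\bm{Y}_n$ is immediate from the scale invariance recorded after Lemma~\ref{lem:existsUnique}: $\Psi_{k_n}(\cdot \mid \bm{X}_n) = \Psi_{k_n}(\cdot \mid \bm{Y}_n)$, $\hat\alpha_n = \hat\alpha(\bm{X}_n) = \hat\alpha(\bm{Y}_n)$, and $\hat\sigma_n/\sigma_n = \sigma(\hat\alpha_n \mid \bm{Y}_n) = \bigl( \frac{1}{k_n}\sum_{i=1}^{k_n} Y_{n,i}^{-\hat\alpha_n} \bigr)^{-1/\hat\alpha_n}$; the tie event is unchanged and, by~\eqref{eq:noties}, asymptotically negligible, and on its complement Lemma~\ref{lem:existsUnique} delivers both the uniqueness-of-maximizer assertion and the characterization of $\hat\alpha_n$ as the unique zero of the strictly decreasing map $\alpha \mapsto \Psi_{k_n}(\alpha \mid \bm{Y}_n)$.

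Next I would identify the limit. For fixed $\alpha \in (\alpha_-,\alpha_+)$, the three functions $x\mapsto x^{-\alpha}$, $x \mapsto x^{-\alpha}\log x$, $x\mapsto \log x$ lie in $\mathcal{F}_1(\alpha_-,\alpha_+)$; since weak convergence to a constant is convergence in probability, Condition~\ref{cond:LLN} gives joint convergence in probability of the three averages making up~\eqref{eq:Psik}, and the continuous mapping theorem then yields $\Psi_{k_n}(\alpha \mid \bm{Y}_n) \pto \Psi_\infty(\alpha)$, where
\[
  \Psi_\infty(\alpha) := \frac{1}{\alpha} + \frac{g(\alpha)}{m(\alpha)} - c, \qquad
  m(\alpha) = \int_0^\infty x^{-\alpha} p_{\alpha_0,1}(x)\,\diff x,
\]
and $g(\alpha) = \int_0^\infty x^{-\alpha}\log x\, p_{\alpha_0,1}(x)\,\diff x$, $c = \int_0^\infty \log x\, p_{\alpha_0,1}(x)\,\diff x$. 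Differentiating under the integral, using $m'(\alpha) = -g(\alpha)$ and Cauchy--Schwarz, shows $\alpha\mapsto g(\alpha)/m(\alpha)$ is non-increasing, so $\Psi_\infty$ is continuous and strictly decreasing; and a standard Fr\'echet moment identity (cf.\ Appendix~\ref{sec:aux}) gives $\Psi_\infty(\alpha_0)=0$, i.e.\ $\alpha_0$ is its unique zero. Consistency of $\hat\alpha_n$ then follows by the usual ``zero of a monotone function'' argument: for any $\eps>0$ with $[\alpha_0-\eps,\alpha_0+\eps]\subset(\alpha_-,\alpha_+)$ one has $\Psi_\infty(\alpha_0-\eps)>0>\Psi_\infty(\alpha_0+\eps)$, so with probability tending to one the $Y_{n,i}$ are not all tied and $\Psi_{k_n}(\alpha_0-\eps\mid\bm{Y}_n)>0>\Psi_{k_n}(\alpha_0+\eps\mid\bm{Y}_n)$, which by strict monotonicity pins the unique zero $\hat\alpha_n$ into $(\alpha_0-\eps,\alpha_0+\eps)$; hence $\hat\alpha_n\pto\alpha_0$.

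For the scale parameter, write $A_n(\alpha) := \frac{1}{k_n}\sum_{i=1}^{k_n} Y_{n,i}^{-\alpha}$, so that $\hat\sigma_n/\sigma_n = A_n(\hat\alpha_n)^{-1/\hat\alpha_n}$. Here I would exploit that each map $\alpha\mapsto A_n(\alpha)$ is convex, being a sum of the convex functions $\alpha\mapsto e^{-\alpha\log Y_{n,i}}$, while $A_n(\alpha)\pto m(\alpha)$ pointwise with $m$ continuous. A sandwich estimate — for small $\eta>0$, on $\{|\hat\alpha_n-\alpha_0|\le\eta\}$ bound $A_n(\hat\alpha_n)$ above and below by affine combinations of $A_n(\alpha_0-\eta)$, $A_n(\alpha_0)$, $A_n(\alpha_0+\eta)$ (using convexity and $A_n\ge0$), then send $\eta\downarrow0$ using continuity of $m$ at $\alpha_0$ — gives $A_n(\hat\alpha_n)\pto m(\alpha_0)$, and a direct computation (the law of $X^{-\alpha_0}$ under $p_{\alpha_0,1}$ is standard exponential) shows $m(\alpha_0)=1$. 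Since $(a,\alpha)\mapsto a^{-1/\alpha}$ is continuous at $(1,\alpha_0)\in(0,\infty)^2$, it follows that $\hat\sigma_n/\sigma_n\pto1$; combined with $\hat\alpha_n\pto\alpha_0$ (both limits deterministic) this gives the claimed joint convergence $(\hat\alpha_n,\hat\sigma_n/\sigma_n)\weak(\alpha_0,1)$.

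The main obstacle will be this last step: because $A_n$ is evaluated at the data-dependent point $\hat\alpha_n$, the pointwise weak law of Condition~\ref{cond:LLN} is not by itself sufficient, and the remedy is the convexity of $\alpha\mapsto y^{-\alpha}$ — equivalently, one first upgrades Condition~\ref{cond:LLN} to uniform convergence in probability of $A_n$ over compact subsets of $(\alpha_-,\alpha_+)$ via the convexity lemma for random convex functions, and then argues by continuity of $m$. All other ingredients are standard.
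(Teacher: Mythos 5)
Your proposal is correct and follows essentially the same route as the paper's proof: scale-invariance reduction, pointwise convergence under Condition~\ref{cond:LLN} of the strictly decreasing estimating function $\alpha\mapsto\Psi_{k_n}(\alpha\mid\bm X_n)$ to a decreasing limit vanishing only at $\alpha_0$, pinning the unique zero $\hat\alpha_n$ between $\alpha_0\pm\eps$, and then a plug-in argument for the scale. The only differences are in the technical devices: the paper identifies the limit explicitly as $\alpha_0^{-1}\{\psi(1)-\psi(\alpha/\alpha_0)\}$ (Lemma~\ref{lem:Psi}) where you use Cauchy--Schwarz and the moment identities of Lemma~\ref{lem:moments}, and for $\hat\sigma_n/\sigma_n$ the paper exploits Lyapounov monotonicity of $\alpha\mapsto(\Pn x^{-\alpha})^{-1/\alpha}$ together with locally uniform convergence of monotone functions to a continuous monotone limit, whereas you invoke the convexity lemma for $\alpha\mapsto\Pn x^{-\alpha}$ --- both are valid ways of upgrading pointwise convergence to uniformity so as to handle evaluation at the random point $\hat\alpha_n$.
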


\subsection{Asymptotic distribution}
\label{subsec:tri:asydis}

We formulate a general condition under which the estimation error of the maximum likelihood estimator for the Fr\'echet parameter vector converges weakly. The central result is Theorem~\ref{theo:asydis} below.

For $0 < \alpha_- < \alpha_+ < \infty$, recall the function class $\mathcal{F}_1( \alpha_-, \alpha_+ )$ in \eqref{eq:Falpha-+} and define another one:
\begin{equation}
\label{eq:F2}
  \mathcal{F}_2(\alpha_-, \alpha_+) 
  =
  \mathcal{F}_1(\alpha_-, \alpha_+)
  \cup
  \{ x \mapsto x^{-\alpha} (\log x)^2 : \alpha_- < \alpha < \alpha_+ \}.
\end{equation}
Furthermore, fix $\alpha_0>0$ and consider the following triple of real-valued functions on $(0,\infty)$:
\begin{equation}
\label{eq:H}
  \mathcal{H} 
  = 
  \{ f_1, f_2, f_3 \} 
  = 
  \{ x \mapsto x^{-\alpha_0} \log(x), \, x \mapsto x^{-\alpha_0}, \, x \mapsto \log x \}.
\end{equation}
The following condition strengthens Condition~\ref{cond:LLN}.

\begin{condition}
\label{cond:CLT}
There exist $\alpha_0 \in (0, \infty)$ and a positive sequence $(\sigma_n)_{n \in \N}$ such that the following two statements hold:
\begin{enumerate}[(i)]
\item
There exist $0 < \alpha_- < \alpha_0 < \alpha_+ < \infty$ such that Equation~\eqref{eq:LLN} holds for all $f \in \mathcal{F}_2(\alpha_-, \alpha_+)$.

\item
There exists a sequence $0 < v_n \to \infty$ and a random vector $\bm Y=(Y_1, Y_2, Y_3)^T$ such that, denoting
\begin{equation}
\label{eq:Gn}
  \Gn f = 
  v_n 
  \left( 
    \frac{1}{k_n} \sum_{i=1}^{k_n} f( X_{n,i} / \sigma_n )
    -
    \int_0^\infty f(x) \, p_{\alpha_0,1}(x) \, \diff x
  \right),
\end{equation}
we have, for $f_j$ as in \eqref{eq:H},
\begin{equation}
\label{eq:Y}
  (\Gn f_1, \Gn f_2, \, \Gn f_3)^T
  \weak
  \bm{Y}
  , \qquad n \to \infty.
\end{equation}
\end{enumerate}
\end{condition}


Let $\Gamma$ be the Euler gamma function and let $\gamma = -\Gamma'(1) = 0.5772\dots$ be the Euler--Mascheroni constant. Recall $\Gamma''(2)=(1-\gamma)^2+\pi^2/6-1$. Define the matrix
\begin{equation}
\label{eq:M}
  M(\alpha_0)
  =
 \frac{6}{\pi^2} 
 \begin{pmatrix}
    \alpha_0^2 & \alpha_0(1-\gamma) & - \alpha_0^2 \\
    \gamma-1 & -(\Gamma''(2) +1) / \alpha_0 &  1-\gamma
  \end{pmatrix},
  \qquad \alpha_0 \in (0, \infty).
\end{equation}

\begin{theorem}[Asymptotic distribution]
\label{theo:asydis}
Let $\bm{X}_n = (X_{n,1}, \ldots, X_{n,k_n})$ be a sequence of random vectors in $(0, \infty)^{k_n}$, where $k_n \to \infty$. Assume that Equation~\eqref{eq:noties} and Condition~\ref{cond:CLT} hold. As $n \to \infty$, the maximum likelihood estimator $(\hat{\alpha}_n, \hat{\sigma}_n)$ satisfies
\begin{equation}
\label{eq:joint:delta}
  \begin{pmatrix} 
    v_n ( \hat{\alpha}_n - \alpha_0 ) \\ 
    v_n \, ( \hat{\sigma}_n / \sigma_n - 1 ) 
  \end{pmatrix} 
  = 
  M(\alpha_0)
  \begin{pmatrix}
    \Gn x^{-\alpha_0} \log(x) \\
    \Gn x^{-\alpha_0} \\
    \Gn \log(x)
  \end{pmatrix} + o_p(1)
  \weak
  M(\alpha_0) \bm{Y},
\end{equation}
where $\bm{Y} = (Y_1, Y_2, Y_3)^T$ and $M(\alpha_0)$ are given in Equations~\eqref{eq:Y} and~\eqref{eq:M}, respectively. 
\end{theorem}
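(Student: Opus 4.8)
The plan is to treat $(\hat\alpha_n,\hat\sigma_n)$ as a $Z$-estimator solving the scale-invariant estimating equation~\eqref{eq:estimEq} and to linearize it. By the scale invariance noted after Lemma~\ref{lem:existsUnique}, setting $Z_{n,i}=X_{n,i}/\sigma_n$ we have $\Psi_{k_n}(\alpha\mid\bm{X}_n)=\Psi_{k_n}(\alpha\mid\bm{Z}_n)$ and $\hat\sigma_n/\sigma_n=\sigma(\hat\alpha_n\mid\bm{Z}_n)$. With $A_n(\alpha)=k_n^{-1}\sum_i Z_{n,i}^{-\alpha}$, $B_n(\alpha)=k_n^{-1}\sum_i Z_{n,i}^{-\alpha}\log Z_{n,i}$, $D_n(\alpha)=k_n^{-1}\sum_i Z_{n,i}^{-\alpha}(\log Z_{n,i})^2$ and $C_n=k_n^{-1}\sum_i\log Z_{n,i}$, this gives $\Psi_{k_n}(\alpha\mid\bm{Z}_n)=\alpha^{-1}+B_n(\alpha)/A_n(\alpha)-C_n$, whose $\alpha$-derivative is $-\alpha^{-2}-D_n(\alpha)/A_n(\alpha)+(B_n(\alpha)/A_n(\alpha))^2$, and $\sigma(\alpha\mid\bm{Z}_n)=A_n(\alpha)^{-1/\alpha}$, so that $\hat\sigma_n/\sigma_n=A_n(\hat\alpha_n)^{-1/\hat\alpha_n}$. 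By~\eqref{eq:noties} we may work on the event that the $X_{n,i}$ are not all tied, on which $\hat\alpha_n\in(0,\infty)$ is the unique zero of the smooth map $\alpha\mapsto\Psi_{k_n}(\alpha\mid\bm{Z}_n)$ (Lemma~\ref{lem:existsUnique}).

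The analytic core is a \emph{uniform} weak law of large numbers on a compact neighbourhood $K=[\alpha_0-\delta,\alpha_0+\delta]\subset(\alpha_-,\alpha_+)$ of $\alpha_0$: $\sup_{\alpha\in K}|A_n(\alpha)-a(\alpha)|\pto0$ and the analogues for $B_n$ and $D_n$, where, using that $Z^{-\alpha_0}$ is standard exponential under $p_{\alpha_0,1}$, $a(\alpha)=\Gamma(1+\alpha/\alpha_0)$, $b(\alpha)=-\Gamma'(1+\alpha/\alpha_0)/\alpha_0$ and $d(\alpha)=\Gamma''(1+\alpha/\alpha_0)/\alpha_0^2$. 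For each fixed $\alpha\in(\alpha_-,\alpha_+)$ the pointwise convergence in probability is exactly Condition~\ref{cond:CLT}(i), since $x^{-\alpha}$, $x^{-\alpha}\log x$ and $x^{-\alpha}(\log x)^2$ all lie in $\mathcal{F}_2(\alpha_-,\alpha_+)$. To pass to local uniform convergence I would exploit the shape of these maps in $\alpha$: for every fixed $x>0$ the map $\alpha\mapsto x^{-\alpha}$ is convex, $\alpha\mapsto x^{-\alpha}(\log x)^2$ is convex (second $\alpha$-derivative $x^{-\alpha}(\log x)^4\ge0$), and $\alpha\mapsto x^{-\alpha}\log x$ is nonincreasing ($\alpha$-derivative $-x^{-\alpha}(\log x)^2\le0$); hence $A_n,D_n$ are random convex functions and $B_n$ a random nonincreasing function, and the stochastic convexity/P\'olya lemma---pointwise convergence of convex or monotone functions to a continuous limit implies local uniform convergence---applies. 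As $a$ is continuous and strictly positive on $K$, it follows that $\Psi_{k_n}(\,\cdot\mid\bm{Z}_n)$ and its $\alpha$-derivative converge uniformly on $K$, in probability, to $\psi(\alpha):=\alpha^{-1}+b(\alpha)/a(\alpha)-\gamma/\alpha_0$ and to $\psi'$. One checks $\psi(\alpha_0)=0$ (the Fr\'echet score has mean zero) and, using $\Gamma(2)=1$, $\Gamma'(2)=1-\gamma$ and $\Gamma''(2)=(1-\gamma)^2+\pi^2/6-1$, that $c_0:=\psi'(\alpha_0)=-\pi^2/(6\alpha_0^2)\neq0$.

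Granting this, the conclusion follows by standard linearization. Write $f_1(x)=x^{-\alpha_0}\log x$, $f_2(x)=x^{-\alpha_0}$, $f_3(x)=\log x$, so that Condition~\ref{cond:CLT}(ii) makes $\Gn f_1,\Gn f_2,\Gn f_3$ jointly tight. A first-order expansion of the ratio $B_n(\alpha_0)/A_n(\alpha_0)=(b(\alpha_0)+v_n^{-1}\Gn f_1)/(1+v_n^{-1}\Gn f_2)$, together with $v_n\psi(\alpha_0)=0$ which cancels the otherwise diverging term, gives $v_n\Psi_{k_n}(\alpha_0\mid\bm{Z}_n)=\Gn f_1+\alpha_0^{-1}(1-\gamma)\Gn f_2-\Gn f_3+o_p(1)$. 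A mean value expansion of $0=\Psi_{k_n}(\hat\alpha_n\mid\bm{Z}_n)$ about $\alpha_0$---using $\hat\alpha_n\pto\alpha_0$ from Theorem~\ref{theo:consistency}, so that the intermediate point lies in $K$ eventually and $\psi'(\alpha_0)=c_0$ is the uniform limit of the derivative there---yields $v_n(\hat\alpha_n-\alpha_0)=-c_0^{-1}v_n\Psi_{k_n}(\alpha_0\mid\bm{Z}_n)+o_p(1)$, which equals the first row of~\eqref{eq:joint:delta} since $-c_0^{-1}=6\alpha_0^2/\pi^2$. For $\hat\sigma_n$, expand $\log(\hat\sigma_n/\sigma_n)=-\hat\alpha_n^{-1}\log A_n(\hat\alpha_n)$ with $\log A_n(\hat\alpha_n)=\log A_n(\alpha_0)+(A_n'/A_n)(\bar\alpha_n)(\hat\alpha_n-\alpha_0)$, using $v_n\log A_n(\alpha_0)=\Gn f_2+o_p(1)$, the uniform convergence $(A_n'/A_n)(\bar\alpha_n)=-(B_n/A_n)(\bar\alpha_n)\pto-b(\alpha_0)/a(\alpha_0)=(1-\gamma)/\alpha_0$, $\hat\alpha_n\pto\alpha_0$, the expansion of $v_n(\hat\alpha_n-\alpha_0)$ just obtained, and $e^t-1=t+o(t)$; collecting the coefficients of $\Gn f_1,\Gn f_2,\Gn f_3$ reproduces the second row of $M(\alpha_0)$, the identity $\Gamma''(2)+1=(1-\gamma)^2+\pi^2/6$ being exactly what makes the $\Gn f_2$-coefficient match. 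Since every remainder is $o_p(1)$, joint weak convergence to $M(\alpha_0)\bm{Y}$ follows from Condition~\ref{cond:CLT}(ii) and the continuous mapping theorem. I expect the uniform law of large numbers of the second paragraph---the passage from the pointwise-in-$f$ hypothesis to uniformity in $\alpha$ and the control of the data-dependent intermediate points---to be the main obstacle; everything else is bookkeeping with Gamma (equivalently, exponential) moments.
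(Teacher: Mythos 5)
Your proposal is correct and follows essentially the same route as the paper: linearize the estimating equation $\Psi_{k_n}(\cdot\mid\bm{X}_n)$ around $\alpha_0$, obtain the first row from $v_n(\hat\alpha_n-\alpha_0)=-\dot\Psi_n(\tilde\alpha_n)^{-1}v_n\Psi_n(\alpha_0)$ with a delta-method expansion of the ratio giving $v_n\Psi_n(\alpha_0)=\Gn f_1+\alpha_0^{-1}(1-\gamma)\Gn f_2-\Gn f_3+o_p(1)$ (the paper's Lemma~\ref{lem:Psinweak2} and Proposition~\ref{prop:shape}), and then feed this into an expansion of the scale equation. The only genuine differences are technical. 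First, for the locally uniform law of large numbers needed to control the slope at the random intermediate point, the paper (Lemma~\ref{lem:dPsin}) monotonizes $\Pn x^{-\alpha}$ and $\Pn x^{-\alpha}(\log x)^2$ via Lyapounov's inequality and then uses the monotone-functions-with-continuous-limit argument, whereas you exploit convexity of $\alpha\mapsto x^{-\alpha}$ and $\alpha\mapsto x^{-\alpha}(\log x)^2$ and invoke the stochastic convexity lemma (using the same monotonicity argument as the paper for $\Pn x^{-\alpha}\log x$); both devices are valid and deliver $\dot\Psi_n(\tilde\alpha_n)\pto-\pi^2/(6\alpha_0^2)$, so the step you flag as the main obstacle is indeed where the paper also spends its effort, just with a different elementary trick. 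Second, for the scale parameter the paper expands $(\hat\sigma_n/\sigma_n)^{-\hat\alpha_n}=\Pn x^{-\hat\alpha_n}$ and converts back with a further mean-value step, while you expand $\log(\hat\sigma_n/\sigma_n)=-\hat\alpha_n^{-1}\log A_n(\hat\alpha_n)$; the bookkeeping is equivalent and both recover the second row of $M(\alpha_0)$ through the identity $\Gamma''(2)+1=(1-\gamma)^2+\pi^2/6$. No gaps: consistency from Theorem~\ref{theo:consistency} (available since Condition~\ref{cond:CLT}(i) contains Condition~\ref{cond:LLN}), joint tightness of $(\Gn f_1,\Gn f_2,\Gn f_3)$ from Condition~\ref{cond:CLT}(ii), and the strict negativity $\dot\Psi_n\le-\alpha^{-2}$ ensuring the division is legitimate are all in place in your argument as in the paper's.
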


For block maxima extracted from a strongly mixing stationary time series, Condition~\ref{cond:CLT} with $v_n = \sqrt{k_n}$, where $k_n$ denotes the number of blocks, will be derived from the Lindeberg central limit theorem. In that case, the distribution of $\bm{Y}$ is trivariate Gaussian with some mean vector~$\mu_{\bm Y}$ (possibly different from $0$, see Theorem~\ref{theo:asydis:stat} below for details) and covariance matrix
\begin{align} \label{eq:Sigma}
  \Sigma_{\bm Y}
  &=
  \frac{1}{\alpha_0^2}
  \begin{pmatrix}
    1-4\gamma+\gamma^2+\pi^2/3 & \alpha_0(\gamma - 2) & \pi^2/6-\gamma \\
    \alpha_0(\gamma - 2) & \alpha_0^2 & -\alpha_0 \\
    \pi^2/6-\gamma &-\alpha_0 &  \pi^2/6
  \end{pmatrix}.
\end{align}
According to Lemma~\ref{lem:cov} below, the right-hand side in \eqref{eq:Sigma} coincides with the covariance matrix of the random vector $\bigl( X^{-\alpha_0} \log(X), \, X^{-\alpha_0}, \, \log(X) \bigr)^T$, where $X$ is Fr\'echet distributed with parameter $(\alpha_0,1)$. From Lemma~\ref{lem:fisher}, recall the inverse of the Fisher information matrix of the Fr\'echet family at $(\alpha, \sigma) = (\alpha_0, 1)$:
\begin{equation}
\label{eq:fisher}
  I_{(\alpha_0,1)}^{-1} 
  = 
  \frac{6}{\pi^2} 
  \begin{pmatrix}
    \alpha_0^2 & (\gamma-1) \\ 
    (\gamma-1) &  \alpha_0^{-2} \{ (1-\gamma)^2 +\pi^2/6 \} 
  \end{pmatrix}.
\end{equation}

\begin{add}
\label{add:asydis}
If $\bm Y$ is normally distributed with covariance matrix $\Sigma_{\bm Y}$ as in \eqref{eq:Sigma}, then the limit $M(\alpha_0) \bm{Y}$ in Theorem~\ref{theo:asydis} is also normally distributed and its covariance matrix is equal to the inverse of the Fisher information matrix of the Fr\'echet family, $M(\alpha_0) \, \Sigma_{\bm Y} \, M(\alpha_0)^T = I_{(\alpha_0, 1)}^{-1}$. 
\end{add}

\section{Block maxima extracted from a stationary time series}
\label{sec:stat}

Let $(\xi_t)_{t \in \Z}$ be a strictly stationary time series, that is, for any $k\in\N$ and $\tau, t_1, \dots, t_k \in \Z$, the distribution of $(\xi_{t_1+\tau}, \dots, \xi_{t_k+\tau})$ is the same as the distribution of $(\xi_{t_1}, \dots, \xi_{t_k})$. For positive integer $i$ and $r$, consider the block maximum
\[
  M_{r,i} = \max( \xi_{(i-1)r+1}, \ldots, \xi_{ir} ).
\]
Abbreviate $M_{r,1} = M_r$. The classical block maxima method consists of choosing a sufficiently large block size $r$ and fitting an extreme-value distribution to the sample of block maxima $M_{r,1}, \ldots, M_{r,k}$. The likelihood is constructed under the simplifying assumption that the block maxima are independent. The present section shows consistency and asymptotic normality of this method in an appropriate asymptotic framework.

For the block maxima distribution to approach its extreme-value limit, the block sizes must increase to infinity. Moreover, consistency can only be achieved when the number of blocks grows to infinity too. Hence, we consider a positive integer sequence $r_n$, to be thought of as a sequence of block sizes. The number of disjoint blocks of size $r_n$ that fit into a sample of size $n$ is equal to $k_n = \ip{n/r_n}$, where $\ip{x}$ denotes the integer part of a real number $x$. Assume that both $r_n \to \infty$ and $k_n \to \infty$ as $n \to \infty$.

The theory will be based on an application of Theorem~\ref{theo:asydis} to the sample of left-truncated block maxima $X_{n,i} = M_{r_n,i} \vee c$ ($i = 1, \ldots, k_n$), for some  positive constant $c$ specified below. The estimators $\hat{\alpha}_n$ and $\hat{\sigma}_n$ are thus the ones in \eqref{eq:MLE:shape} and \eqref{eq:MLE:scale}, respectively. The reason for the left-truncation is that otherwise, some of the block maxima could be zero or negative. Asymptotically, such left-truncation does not matter, since all maxima will simultaneously diverge to infinity in probability (Condition~\ref{cond:small} below).

In Section~\ref{sec:iid} below, we will specialize things further to the case where the random variables $\xi_t$ are independent. In particular, we will simplify the list of conditions given in this section.

The basic assumption is that the distribution of rescaled block maxima is asymptotically Fr\'echet. The sequence of scaling constants should possess a minimal degree of regularity. The assumption is satisfied in case the stationary distribution of the series is in the Fr\'echet domain of attraction and the series possesses a positive extremal index; see Remark~\ref{rem:extrIndex} below. 

\begin{condition}[Domain of attraction]
\label{cond:DA}
The time series $(\xi_t)_{t \in \Z}$ is strictly stationary and there exists a sequence $(\sigma_n)_{n\in\N}$ of positive numbers with $\sigma_n \to \infty$ and a positive number $\alpha_0$ such that
\begin{equation}
\label{eq:DA:stat}
  M_n / \sigma_n \weak \Frechet( \alpha_0, 1 ), \qquad n \to \infty.
\end{equation}
Moreover, $\sigma_{m_n} / \sigma_n \to 1$ for any integer sequence $(m_n)_{n\in\N}$ such that $m_n / n \to 1$ as $n \to \infty$.
\end{condition}

The domain-of-attraction condition implies that, for every scalar $c$, we have $\Pr[ M_n \le c ] = \Pr[ M_n / \sigma_n \le c / \sigma_n ] \to 0$ as $n \to \infty$. In words, the block maxima become unboundedly large as the sample size grows to infinity. Still, out of a sample of $k_n$ block maxima, the smallest of the maxima might still be small, especially when the number of blocks is large, or, equivalently, the block sizes are not large enough. The following condition prevents this from happening.

\begin{condition}[All block maxima diverge]
\label{cond:small}
For every $c \in (0, \infty)$, we have
\[
  \lim_{n \to \infty} \Pr[ \min( M_{r_n,1}, \ldots, M_{r_n, k_n} ) \le c ] = 0.
\]
\end{condition}

To control the serial dependence within the time series, we require that the Rosenblatt mixing coefficients decay sufficiently fast: for positive integer $\ell$, put
\[
  \alpha(\ell) 
  = 
  \sup \big\{
  \abs{ \Pr(A \cap B) - \Pr(A) \Pr(B) } :     A \in \sigma( \xi_t : t \le 0 ),
    B \in \sigma( \xi_t : t \ge \ell ) \big\},
\]
where $\sigma(\,\cdot\,)$ denotes the $\sigma$-field generated by its argument.

\begin{condition}[$\alpha$-Mixing with rate]
\label{cond:alpha}
We have $\lim_{\ell \to \infty} \alpha(\ell) = 0$. Moreover, there exists $\omega > 0$ 
such that
\begin{equation}
\label{eq:mixing:kdelta}
  k_n^{1+\omega} \, \alpha(r_n) \to 0, \qquad n \to \infty.
\end{equation}
\end{condition}

Condition~\ref{cond:alpha} can be interpreted as requiring the block sizes $r_n$ to be sufficiently large. For instance, if $\alpha(\ell) = O( \ell^{-a} )$ for some $a > 0$, then \eqref{eq:mixing:kdelta} is satisfied as soon as $r_n$ is of larger order than 
$n^{(1+\eps)/(1+a)}$ for some $0<\eps<a$; in that case, one may choose $\omega=\eps$.
Note that the exponent $a$ is allowed to be smaller than one, in which case the sequence of mixing coefficients is not summable.

%
%

In order to be able to integrate \eqref{eq:DA:stat} to the limit, we require an asymptotic bound on certain moments of the block maxima; more precisely, on negative power moments in the left tail and on logarithmic moments in the right tail.

\begin{condition}[Moments]
\label{cond:moment}
There exists some $\mom>2/\omega$ with $\omega$ from Condition~\ref{cond:alpha} such that
\begin{align}
  \limsup_{n \to \infty}
  \Exp \bigl[ g_{\mom, \alpha_0}\bigl( (M_n \vee 1)/\sigma_n \bigr)\bigr]
  < \infty, \label{eq:mom1}
\end{align}
where $g_{\mom, \alpha_0}(x) = \{ x^{-\alpha_0} \ind(x\le e) + \log (x) \ind(x>e) \}^{2+\mom}$.
\end{condition}
An elementary argument shows that if Condition~\ref{cond:moment} holds, then $M_n \vee 1$ in the $\limsup$ may be replaced by $M_n\vee c$, for arbitrary $c > 0$.
Moreover, note that the limiting Fr\'echet distribution satisfies $\int_0^\infty x^\beta \, p_{\alpha_0,1}(x) \, \diff x < \infty$ if and only if $\beta$ is less than $\alpha_0$. 
In some scenarios, e.g., for the iid case considered in Section~\ref{sec:iid} or  for the moving maximum process considered in Section~\ref{subsec:movmax}, 
it can be shown that  the following sufficient condition for  Condition~\ref{cond:moment} is true:
\begin{align} \label{eq:momall}
  \limsup_{n \to \infty}
  \Exp \bigl[ \bigl\{ (M_n \vee c)/\sigma_n \bigr\}^\beta \bigr]
  < \infty
\end{align}
for all $c>0$ and all $\beta \in (-\infty, \alpha_0)$. In that case, Condition~\ref{cond:moment} is easily satisfied for any $\mom>0$.

By Condition~\ref{cond:small} and Lemma~\ref{lem:ties}, the probability 
that all block maxima $M_{r_n,1}, \ldots, M_{r_n,k_n}$ are larger than some positive constant $c$ and that they are not all equal tends to unity. On this event, we can study the maximum likelihood estimators $(\hat{\alpha}_n, \hat{\sigma}_n)$ for the parameters of the Fr\'echet distribution based on the sample of block maxima. 

Fix $c \in (0, \infty)$ and put
\[ 
  X_{n,i} = M_{r_n,i} \vee c. 
\]
Let $\Gn$ be the empirical process associated to $X_{n,1}/\sigma_{r_n}, \ldots, X_{n,k_n}/\sigma_{r_n}$ as in \eqref{eq:Gn} with $v_n = \sqrt{k_n}$. The empirical process is not necessarily centered, which is why we need a handle on its expectation.

\begin{condition}[Bias]
\label{cond:bias}
There exists $c \in (0, \infty)$ such that for every function $f$ in $\mathcal{H}$ defined in \eqref{eq:H}, the following limit exists:
\begin{equation}
\label{eq:bias}
  \lim_{n \to \infty} 
  \sqrt{k_n} 
  \left( 
    \Exp\bigl[ f \bigl( (M_{r_n} \vee c) / \sigma_{r_n} \bigr)\bigr] 
    - 
    \int_0^\infty f(x) \, p_{\alpha_0, 1}(x) \, \diff x
  \right)
  = 
  B(f).
\end{equation}
\end{condition}

\begin{theorem}
\label{theo:asydis:stat} Suppose that Conditions~\ref{cond:DA} up to~\ref{cond:bias} are satisfied and fix $c$ as in Condition~\ref{cond:bias}. 
Then, with probability tending to one, there exists a unique maximizer $(\hat{\alpha}_n, \hat{\sigma}_n)$ of the Fr\'echet log-likelihood \eqref{eq:loglik} based on the block maxima $M_{r_n,1}, \ldots, M_{r_n,k_n}$, and we have, as $n \to \infty$,
\begin{align*}
  \begin{pmatrix} 
    \sqrt{k_n} \, ( \hat{\alpha}_n - \alpha_0 ) \\ 
    \sqrt{k_n} \, ( \hat{\sigma}_n / \sigma_{r_n} - 1 ) 
  \end{pmatrix} 
  &= 
  M(\alpha_0)
  \begin{pmatrix}
    \Gn x^{-\alpha_0} \log(x) \\
    \Gn x^{-\alpha_0} \\
    \Gn \log(x)
  \end{pmatrix} + o_p(1) 
  \weak
  \mathcal{N}_2 \bigl( 
    M(\alpha_0) \, B, \; I_{\alpha_0,1}^{-1}
  \bigr).
\end{align*}
Here,  $M(\alpha_0)$ and $I_{\alpha_0,1}^{-1}$ are defined in Equations~\eqref{eq:M} and \eqref{eq:fisher}, respectively, while  $B = (B(f_1), B(f_2), B(f_3))^T$, where $B(f)$ is the limit in \eqref{eq:bias} and where $f_1, f_2, f_3$ are defined in \eqref{eq:H}.
\end{theorem}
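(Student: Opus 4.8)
The plan is to deduce Theorem~\ref{theo:asydis:stat} from Theorem~\ref{theo:asydis} by verifying Condition~\ref{cond:CLT} for the truncated block maxima $X_{n,i} = M_{r_n,i} \vee c$ with $v_n = \sqrt{k_n}$ and $\sigma_n$ replaced by $\sigma_{r_n}$, and then identifying the limiting covariance structure. First I would fix $c$ as in Condition~\ref{cond:bias} and observe that Condition~\ref{cond:small} together with the no-ties lemma (Lemma~\ref{lem:ties}) gives \eqref{eq:noties} and, with probability tending to one, the block maxima are not all equal, so Lemma~\ref{lem:existsUnique} applies and $(\hat\alpha_n,\hat\sigma_n)$ is well-defined on that event. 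I would also record the elementary fact that replacing $M_{r_n,i}$ by $M_{r_n,i}\vee c$ has no effect on the weak limits, since by Condition~\ref{cond:small} all $k_n$ truncations are simultaneously inactive with probability tending to one; hence it suffices to prove the central limit theorem for averages of $f((M_{r_n,i}\vee c)/\sigma_{r_n})$.

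The heart of the argument is part (ii) of Condition~\ref{cond:CLT}: the joint asymptotic normality of $(\Gn f_1,\Gn f_2,\Gn f_3)$ for $f_1,f_2,f_3$ in $\mathcal H$. Here I would decompose $\Gn f = \sqrt{k_n}(\tfrac1{k_n}\sum_i \{f(X_{n,i}/\sigma_{r_n}) - \Exp f(X_{n,i}/\sigma_{r_n})\}) + \sqrt{k_n}(\Exp f(M_{r_n}\vee c)/\sigma_{r_n}) - \int f\,p_{\alpha_0,1})$. The second (deterministic) term converges to $B(f)$ by Condition~\ref{cond:bias}; this produces the mean vector $\mu_{\bm Y} = B$ and hence the drift $M(\alpha_0)B$ in the final statement. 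For the centered stochastic term, the summands $f(X_{n,i}/\sigma_{r_n}) - \Exp[\cdot]$ form a triangular array of row-wise stationary, $\alpha$-mixing random variables; I would invoke a central limit theorem for triangular arrays of strongly mixing sequences (of Lindeberg type, e.g. a blocking argument combined with Bernstein-type bounds, or a standard reference such as the CLT for mixing arrays), using Condition~\ref{cond:alpha} for the mixing rate $k_n^{1+\omega}\alpha(r_n)\to0$ and Condition~\ref{cond:moment} for the $(2+\mom)$-th moment bound that controls the Lindeberg condition and the covariance summability — the constraint $\mom > 2/\omega$ being exactly what couples the moment and mixing rates. The truncation of $f$ near $0$ (via $\vee c$) and near $\infty$ (via the $\log$ growth controlled by $g_{\mom,\alpha_0}$) is what makes the moment condition usable on the whole range. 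The limiting covariance matrix is, by Fatou/dominated-convergence arguments justified by Condition~\ref{cond:moment} and the uniform integrability it provides, the sum of the "diagonal" covariance of $(f_1,f_2,f_3)(X/\sigma_{r_n})$ — which converges to the covariance of $(X^{-\alpha_0}\log X, X^{-\alpha_0},\log X)$ under $\Frechet(\alpha_0,1)$, computed in Lemma~\ref{lem:cov} to be $\Sigma_{\bm Y}$ in \eqref{eq:Sigma} — plus the cross-covariances at lags $\ell\ge1$, which vanish in the limit because of the mixing rate (again the moment bound gives a summable covariance bound and the rate forces the tail to die). Part (i) of Condition~\ref{cond:CLT}, the weak law of large numbers over the larger class $\mathcal F_2(\alpha_-,\alpha_+)$, follows from the same ingredients, in fact more easily: for any fixed $f\in\mathcal F_2$, the mixing condition gives convergence in probability of $\tfrac1{k_n}\sum_i f(X_{n,i}/\sigma_{r_n})$ to its limiting mean provided $\Var$ of the average vanishes, which the $(2+\mom)$-th moment bound and mixing rate ensure; the limiting mean is $\int f\,p_{\alpha_0,1}$ by the domain-of-attraction condition \eqref{eq:DA:stat} combined with uniform integrability from Condition~\ref{cond:moment}, and here one uses $\sigma_{r_n}$ in place of $\sigma_n$, which is legitimate because $k_n = \ip{n/r_n}$ and the regularity clause in Condition~\ref{cond:DA} ($\sigma_{m_n}/\sigma_n\to1$ when $m_n/n\to1$) makes $\sigma_{r_n}$ and the natural scaling for $M_{r_n}$ asymptotically interchangeable.

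With Condition~\ref{cond:CLT} verified, Theorem~\ref{theo:asydis} gives the stated expansion and the weak limit $M(\alpha_0)\bm Y$, where $\bm Y\sim\mathcal N_3(B,\Sigma_{\bm Y})$. It remains to simplify $M(\alpha_0)\bm Y$: by Addendum~\ref{add:asydis}, $M(\alpha_0)\Sigma_{\bm Y}M(\alpha_0)^T = I_{(\alpha_0,1)}^{-1}$, and clearly the mean is $M(\alpha_0)B$, so $M(\alpha_0)\bm Y \sim \mathcal N_2(M(\alpha_0)B, I_{(\alpha_0,1)}^{-1})$, which is precisely the claimed limit. The existence and uniqueness of the maximizer with probability tending to one is the "moreover" part of Theorem~\ref{theo:consistency} (whose hypotheses, namely \eqref{eq:noties} and Condition~\ref{cond:LLN}, are contained in what we have already checked), and consistency $(\hat\alpha_n,\hat\sigma_n/\sigma_{r_n})\weak(\alpha_0,1)$ follows either from that theorem or a fortiori from the asymptotic normality just established.

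The main obstacle I anticipate is the verification of part (ii) of Condition~\ref{cond:CLT}: establishing the multivariate CLT for the triangular array of mixing, row-stationary summands while simultaneously (a) handling the unbounded functions $x\mapsto x^{-\alpha_0}\log x$ etc.\ through the tailored moment bound in Condition~\ref{cond:moment}, (b) showing the lag-$\ell$ covariances sum to something that vanishes under only $k_n^{1+\omega}\alpha(r_n)\to0$ rather than summable mixing coefficients — this is the delicate long-range-dependence-tolerant part and is where the coupling $\mom>2/\omega$ is essential via a covariance inequality of the form $\abs{\cov(f(X_{n,1}),f(X_{n,1+\ell}))}\lesssim \alpha(\ell)^{\mom/(2+\mom)}\|f(X)\|_{2+\mom}^2$ — and (c) replacing the natural norming sequence for $M_{r_n}$ by $\sigma_{r_n}$, which needs the regularity of $(\sigma_n)$ from Condition~\ref{cond:DA}. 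Everything else is either routine (the deterministic bias term, the reduction via truncation, the LLN in part (i)) or already packaged in the earlier results (Lemmas~\ref{lem:existsUnique},~\ref{lem:ties},~\ref{lem:cov},~\ref{lem:fisher}, Theorem~\ref{theo:asydis}, Addendum~\ref{add:asydis}).
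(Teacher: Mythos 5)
Your overall architecture coincides with the paper's: reduce to Theorem~\ref{theo:asydis} and Addendum~\ref{add:asydis} applied to $X_{n,i}=M_{r_n,i}\vee c$ with $v_n=\sqrt{k_n}$, dispose of the truncation via Condition~\ref{cond:small} and of ties via Lemma~\ref{lem:ties}, split $\Gn f$ into a centered stochastic part plus the deterministic bias $B_n f\to B(f)$ from Condition~\ref{cond:bias}, and identify the limit covariance through Lemma~\ref{lem:cov} and Addendum~\ref{add:asydis}. The gap is in the one step you yourself flag as the obstacle, Condition~\ref{cond:CLT}(ii): you propose to ``invoke a CLT for triangular arrays of strongly mixing sequences'' and to kill the lag-$\ell\ge 1$ cross-covariances ``because of the mixing rate.'' This does not go through as stated, because consecutive block maxima are built from \emph{contiguous} stretches of the series: the strong mixing coefficient between $\sigma(M_{r_n,i})$ and $\sigma(M_{r_n,i+1})$ is only bounded by $\alpha(1)$, which does not tend to zero, so neither an off-the-shelf mixing-array CLT nor the covariance inequality you quote (which requires a growing time gap) controls the lag-one terms. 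Note that the paper's own Lemma~\ref{lem:ties} is deliberately stated for blocks $1$ and $3$, precisely to have a separation of $r_n$; for adjacent blocks no such separation exists.

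The missing idea is the within-block clipping device: replace $M_{r_n,i}$ by $M_{r_n,i}^{[\ell_n]}$ as in \eqref{eq:max:clipped}, with $\ell_n=\max\{s_n,\ip{r_n\sqrt{\alpha(s_n)}}\}$, $s_n=\ip{\sqrt{r_n}}$, so that the clipped maxima over distinct blocks are separated by at least $\ell_n$ observations. For the clipped, centered empirical process one can then factorize characteristic functions (Lemma~3.9 of Dehling--Philipp), incurring an error $O(k_n\alpha(\ell_n))$, and apply the Lyapunov CLT to an independent copy of the array, with variance convergence supplied by Lemma~\ref{lem:moment}; this is where the Cram\'er--Wold device and the moment bound of Condition~\ref{cond:moment} enter. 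The difference $\Delta_n=\tilde{\G}_n-\tilde{\G}_n^{[\ell_n]}$ must then be shown to be $o_p(1)$, which needs Lemmas~\ref{lem:clipping} and~\ref{lem:clipping:2} (clipping rarely changes a block maximum) together with the covariance inequality \eqref{eq:alphaCov} applied to the clipped \emph{differences} $\Delta_{r_n,i}^{[\ell_n]}f$ -- this is where the interplay $\mom>2/\omega$ is actually used, not on lag covariances of the raw block maxima as in your sketch. Without this (or an equivalent asymptotic-independence argument for adjacent blocks), the claim that the row of block maxima satisfies a mixing CLT with limiting covariance $\Sigma_{\bm Y}$ is unsubstantiated; the rest of your argument, including part (i) of Condition~\ref{cond:CLT}, the bias treatment and the final identification $M(\alpha_0)\Sigma_{\bm Y}M(\alpha_0)^T=I_{(\alpha_0,1)}^{-1}$, matches the paper.
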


The proof of Theorem~\ref{theo:asydis:stat} is given in Subsection~\ref{subsec:proofs:stat}.
The conditions imposed in Theorem~\ref{theo:asydis:stat} are rather high-level. In the setting of a sequence of independent and identically distributed random variables, they can be brought down to analytical conditions on the tail of the stationary distribution function (Theorem~\ref{theo:ml}). Moreover, all conditions will be worked out in a moving maximum model in Section~\ref{subsec:movmax}. Still, we admit that for more common time series models, such as linear time series with heavy-tailed innovations or solutions to stochastic recurrence equations, checking the conditions in Theorem~\ref{theo:asydis:stat} may not be an easy matter. Especially the bias Condition~\ref{cond:bias}, which requires quite detailed knowledge on the distribution of the sample maximum, may be hard to verify. Even in the i.i.d.\ case, where the distribution of the sample maximum is known explicitly, checking Condition~\ref{cond:bias} occupies more than three pages in the proof of Theorem~\ref{theo:ml} below.

Interestingly, the asymptotic covariance matrix in Theorem~\ref{theo:asydis:stat} is unaffected by serial dependence and the asymptotic standard deviation of $\sqrt{k_n} ( \hat{\alpha}_n - \alpha_0 )$ is always equal to $(\sqrt{6}/\pi) \times \alpha_0 \approx 0.7797 \times \alpha_0$. The reason for this invariance is that even for time series, maxima over large disjoint blocks are asymptotically independent because of the strong mixing condition.

\begin{remark}[Domain-of-attraction condition for positive extremal index]
\label{rem:extrIndex}
Let $F$ be the cumulative distribution function of $\xi_1$. Assume that there exist $0 < a_n \to \infty$ and $\alpha_0 \in (0, \infty)$ such that
\[ 
  \lim_{n \to \infty} F^n( a_n x ) = \exp( - x^{-\alpha_0} ), \qquad x \in (0, \infty). 
\]
Moreover, assume that the sequence $(\xi_t)_{t \in \Z}$ has extremal index $\vartheta \in (0, 1]$ \citep{leadbetter:1983}: If $u_n \to \infty$ is such that $F^n( u_n )$ converges, then 
\[ 
  \Pr( M_n \le u_n ) = F^{n \vartheta}( u_n ) + o(1), \qquad n \to \infty.
\]
Note that we assume that $\vartheta > 0$. Putting 
$
  \sigma_n = \vartheta^{1/\alpha_0} a_n
$ 
we obtain that Condition~\ref{cond:DA} is satisfied: for every $x \in (0, \infty)$, we have
\begin{align*}
  \Pr( M_n / \sigma_n \le x )
  &= F^{n\vartheta}( \sigma_n x ) + o(1) 
  \to \exp \bigl( - \vartheta (\vartheta^{1/\alpha_0} x)^{-\alpha_0} \bigr) = \exp ( - x^{-\alpha_0} ),
  \qquad n \to \infty.
\end{align*}
\end{remark}

\section{Block maxima extracted from an iid sample}
\label{sec:iid}

We specialize Theorem~\ref{theo:asydis:stat} to the case where the random variables $\xi_1, \xi_2, \ldots$ are independent and identically distributed with common distribution function $F$. In this setting, fitting extreme-value distributions to block maxima is also considered in \cite{Dom15} (consistency of the maximum likelihood estimator in the GEV-family with $\gamma>-1$) and \cite{dHF15} (asymptotic normality of the probability weighted moment estimator in the GEV-family with $\gamma<1/2$).
Assume that $F$ is in the maximum domain of attraction of the Fr\'echet distribution with shape parameter~$\alpha_0 \in (0, \infty)$: there exists a positive scalar sequence $(a_n)_{n \in \N}$ such that, for every $x \in (0, \infty)$,
\begin{equation}
\label{eq:DA}
  F^n(a_n x) 
  \to 
  e^{-x^{-\alpha_0}}, 
  \qquad n \to \infty.
\end{equation}

Because of serial independence, the conditions in Theorem~\ref{theo:asydis:stat} can be simplified considerably.
In addition, the mean vector of the asymptotic bivariate normal distribution of the maximum likelihood estimator can be made explicit. Required is a second-order reinforcement of \eqref{eq:DA} in conjunction with a growth restriction on the number of blocks.





Equation \eqref{eq:DA} is equivalent to regular variation of $-\log F$ at infinity with index $-\alpha_0$ \citep{gnedenko:1943}: we have $F(x) < 1$ for all $x \in \R$ and
\begin{equation}
\label{eq:RV}
  \lim_{u \to \infty} \frac{- \log F(ux)}{ -\log F(u)} = x^{-\alpha_0}, \qquad x \in (0, \infty).
\end{equation}
The scaling constants in \eqref{eq:DA} may be chosen as any sequence $(a_n)_{n \in \mathbb{N}}$ that satisfies
\begin{equation}
\label{eq:scaling}
  \lim_{n \to \infty} n \, \{ -\log F(a_n) \} = 1.
\end{equation}
Being constructed from the asymptotic inverse of a regularly varying function with non-zero index, the sequence $(a_n)_{n \in \mathbb{N}}$ is itself regularly varying at infinity with index $1/\alpha_0$.

The following condition reinforces \eqref{eq:RV} and thus \eqref{eq:DA} from regular variation to second-order regular variation \citep[Section~3.6]{BGT87}. With $-\log F$ replaced by $1-F$, it appears for instance in \citet[Theorem~3.2.5]{dHF06} in the context of the asymptotic distribution of the Hill estimator. For $\tau \in \R$, define $h_\tau : (0, \infty) \to \R$ by
\begin{equation}
\label{eq:htau}
  h_\tau(x) 
  = 
  \int_1^x y^{\tau - 1} \, \diff y
  =
  \begin{cases}
    \dfrac{x^\tau - 1}{\tau}, & \text{if $\tau \neq 0$,} \\[1ex]
    \log(x), & \text{if $\tau = 0$.}
  \end{cases}
\end{equation}

\begin{condition}[Second-Order Condition]
\label{cond:secor}
There exists $\alpha_0 \in (0, \infty)$, $\rho \in (-\infty, 0]$, and a real function $A$ on $(0, \infty)$ of constant, non-zero sign such that $\lim_{u \to \infty} A(u) = 0$ and such that, for all $x \in (0, \infty)$,
\begin{equation}
\label{eq:SV:2}
  \lim_{u \to \infty} 
  \frac{1}{A(u)} 
  \left( 
    \frac{-\log F(ux)}{-\log F(u)} - x^{-\alpha_0} 
  \right) 
  = 
  x^{-\alpha_0} \, h_\rho(x).
\end{equation}
\end{condition}

The function $A$ can be regarded as capturing the speed of convergence in \eqref{eq:RV}.
The form of the limit function in \eqref{eq:SV:2} may seem unnecessarily specific, but actually, it is not, as explained in Remark~\ref{rem:secor} below.


Let $\psi = \Gamma' / \Gamma$ denote the digamma function and recall the Euler--Mascheroni constant $\gamma = -\Gamma'(1) = 0.5772\ldots$. To express the asymptotic bias of the maximum likelihood estimators, we will employ the functions $b_1$ and $b_2$ defined by
\begin{equation}
\label{eq:b1}
  b_1(x) = 
  \begin{cases}
    (1+x) \, \Gamma ( x ) \{ \gamma  + \psi(1+x) \}, & \text{if $x > 0$,} \\
    \dfrac{\pi^2}{6}, & \text{if $x = 0$,}
  \end{cases}
\end{equation}
and
\begin{equation}
\label{eq:b2}
  b_2(x) = 
  \begin{cases} 
    -\dfrac{\pi^2}{6x} + 
      (1+x) \, \Gamma ( x )  
      \{ \Gamma''(2) + \gamma + (\gamma-1) \, \psi(1 + x)\},
    & \text{if $x > 0$,} \\
    0, & \text{if $x = 0$}.
  \end{cases}
\end{equation} 
See Figure~\ref{fig:asybias} for the graphs of these two functions.
For $(\alpha_0, \rho) \in (0, \infty) \times (-\infty, 0]$, define the bias function
\begin{equation}
\label{eq:bias:iid}
  B(\alpha_0, \rho) 
  = 
  - \frac{6}{\pi^2} 
  \begin{pmatrix} 
    b_1(\abs{\rho}/\alpha_0) \\ 
    b_2(\abs{\rho}/\alpha_0) / \alpha_0^2
  \end{pmatrix}.
\end{equation}
The proof of the following theorem is given in Section~\ref{subsec:proofs:iid}.

\begin{theorem}
\label{theo:ml}
Let $\xi_1, \xi_2, \ldots$ be independent random variables with common distribution function $F$ satisfiying Condition~\ref{cond:secor}.
Let the block sizes $r_n$ be such that $r_n \to \infty$ and $k_n = \floor{ n / r_n } \to \infty$ as $n \to \infty$ and assume that 
\begin{equation}
\label{eq:ka}
  \lim_{n\to \infty} \sqrt{k_n} \, A(a_{r_n}) = \lambda \in \R.
\end{equation}
Then, with probability tending to one, there exists a unique maximizer $(\hat{\alpha}_n, \hat{\sigma}_n)$ of the Fr\'echet log-likelihood \eqref{eq:loglik} based on the block maxima $M_{r_n,1}, \ldots, M_{r_n,k_n}$, and we have
\begin{equation}
\label{eq:ml}
  \sqrt{k_n} 
  \begin{pmatrix}
    \hat{\alpha}_n - \alpha_0 \\
    \hat{\sigma}_n / a_{r_n} - 1
  \end{pmatrix}
  \weak 
  \Nc_2 \left( \lambda \, B(\alpha_0, \rho), \, I_{(\alpha_0,1)}^{-1} \right),
  \qquad n \to \infty,
\end{equation}
where $I_{(\alpha_0,1)}^{-1}$ denotes the inverse of the Fisher information of the Fr\'echet family as in~\eqref{eq:fisher}
and with $B( \alpha_0, \rho )$ as in \eqref{eq:bias:iid}.
\end{theorem}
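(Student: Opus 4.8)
The plan is to derive Theorem~\ref{theo:ml} as a corollary of Theorem~\ref{theo:asydis:stat} applied to the i.i.d.\ sequence $(\xi_t)_{t\in\Z}$, so the work splits into two halves: first verifying Conditions~\ref{cond:DA}--\ref{cond:bias} in this setting (many of them becoming trivial because of independence), and second computing the bias constants $B(f_1), B(f_2), B(f_3)$ explicitly in terms of $\alpha_0$, $\rho$, $\lambda$ and identifying $M(\alpha_0) B$ with $\lambda B(\alpha_0,\rho)$. For Condition~\ref{cond:DA}: regular variation of $-\log F$ with index $-\alpha_0$ (which follows from Condition~\ref{cond:secor}, itself a reinforcement of \eqref{eq:RV}) gives $F^{r_n}(a_{r_n}x)\to e^{-x^{-\alpha_0}}$, hence $M_{r_n}/a_{r_n}\weak\Frechet(\alpha_0,1)$ with $\sigma_n:=a_{r_n}$; here I must also note that we are rescaling by $a_{r_n}$ rather than $a_n$, so the relevant ``$\sigma_n$'' in the triangular-array notation is $a_{r_n}$, and the regular variation of $n\mapsto a_n$ with index $1/\alpha_0$ gives the required stability $a_{m_n}/a_n\to1$ when $m_n/n\to1$. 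Conditions~\ref{cond:small} and~\ref{cond:alpha} are essentially immediate: in the i.i.d.\ case $\alpha(\ell)=0$ for all $\ell\ge1$, so \eqref{eq:mixing:kdelta} holds for any $\omega>0$, and $\Pr[\min_i M_{r_n,i}\le c]=1-(1-F^{r_n}(c))^{k_n}$; since $F^{r_n}(c)=(F(c))^{r_n}\to0$ exponentially while $k_n=\floor{n/r_n}$ grows only polynomially (because $r_n\to\infty$ forces $k_n\le n$), this tends to $0$. One subtlety: if $F$ has bounded support below, need $F(c)<1$, which is automatic from \eqref{eq:RV} stating $F(x)<1$ for all $x$.

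For Condition~\ref{cond:moment}, the point is to establish \eqref{eq:momall}, i.e.\ $\limsup_n \Exp[((M_{r_n}\vee c)/a_{r_n})^\beta]<\infty$ for every $\beta<\alpha_0$, which then gives Condition~\ref{cond:moment} for any $\mom>0$ and in particular some $\mom>2/\omega$ since $\omega$ is free. This is a Potter-bounds argument: writing $\Exp[(M_{r_n}/a_{r_n})^\beta]=\int_0^\infty \beta t^{\beta-1}\Pr[M_{r_n}>a_{r_n}t]\,\diff t$ with $\Pr[M_{r_n}>a_{r_n}t]=1-F^{r_n}(a_{r_n}t)\le r_n(1-F(a_{r_n}t))=r_n(-\log F(a_{r_n}t))(1+o(1))$ and then using uniform (Potter) bounds on the regularly varying function $-\log F$ together with $r_n(-\log F(a_{r_n}))\to1$, one bounds the integrand by a constant times $t^{\beta-1}\min(1,t^{-\alpha_0+\delta})$ which is integrable when $\beta<\alpha_0$; for the left tail $t<1$ one uses $\Pr[M_{r_n}>a_{r_n}t]\le1$. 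The elementary remark after Condition~\ref{cond:moment} lets us pass from $M_n\vee1$ to $M_n\vee c$.

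The heart of the proof — and the stated main obstacle, the ``more than three pages'' — is verifying the bias Condition~\ref{cond:bias} and evaluating $B(f)$ for $f\in\Hc=\{x^{-\alpha_0}\log x,\ x^{-\alpha_0},\ \log x\}$. Here one wants
\[
  \sqrt{k_n}\Bigl(\Exp[f((M_{r_n}\vee c)/a_{r_n})]-\int_0^\infty f\,p_{\alpha_0,1}\Bigr)\to B(f),
\]
and the natural route is: (i) show the $\vee c$ truncation and the discrepancy between integrating against the law of $M_{r_n}/a_{r_n}$ versus against $F^{r_n}(a_{r_n}\cdot)$ contribute $o(k_n^{-1/2})$; (ii) write the leading term as $\int f(x)\,\diff(F^{r_n}(a_{r_n}x)-e^{-x^{-\alpha_0}})$ and use integration by parts to turn it into $-\int f'(x)(F^{r_n}(a_{r_n}x)-e^{-x^{-\alpha_0}})\,\diff x$; (iii) expand $F^{r_n}(a_{r_n}x)=\exp(r_n\log F(a_{r_n}x))$ and, using $r_n\log F(a_{r_n}x)=-x^{-\alpha_0}(1+A(a_{r_n})h_\rho(x)+o(A(a_{r_n})))$ from Condition~\ref{cond:secor} and \eqref{eq:scaling}, get $F^{r_n}(a_{r_n}x)-e^{-x^{-\alpha_0}}=e^{-x^{-\alpha_0}}\cdot(-x^{-\alpha_0}h_\rho(x))A(a_{r_n})(1+o(1))$ pointwise, with a dominated-convergence argument (again via Potter bounds, handling the cases $\rho<0$ and $\rho=0$ separately and watching integrability near $0$ and $\infty$ against $f'$); (iv) multiply by $\sqrt{k_n}$ and use \eqref{eq:ka}, so $B(f)=-\lambda\int_0^\infty f'(x)e^{-x^{-\alpha_0}}x^{-\alpha_0}h_\rho(x)\,\diff x$. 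This reduces everything to three explicit integrals. Then $M(\alpha_0)B$ must be shown to equal $\lambda B(\alpha_0,\rho)$ from \eqref{eq:bias:iid}; this is a direct but lengthy computation with Gamma-function identities (substituting $u=x^{-\alpha_0}$ turns the integrals into Gamma integrals and their $\rho$-derivatives, producing the $\Gamma(x)$, $\psi(1+x)$, $\gamma$, $\pi^2/6$, $\Gamma''(2)$ combinations in $b_1,b_2$, and the $x=0$ cases handled by continuity as $\rho\to0$). Finally, having verified all of Conditions~\ref{cond:DA}--\ref{cond:bias}, Theorem~\ref{theo:asydis:stat} yields existence/uniqueness with probability tending to one and
\[
  \sqrt{k_n}\bigl(\hat\alpha_n-\alpha_0,\ \hat\sigma_n/a_{r_n}-1\bigr)^T\weak\Nc_2\bigl(M(\alpha_0)B,\ I_{(\alpha_0,1)}^{-1}\bigr)=\Nc_2\bigl(\lambda B(\alpha_0,\rho),\ I_{(\alpha_0,1)}^{-1}\bigr),
\]
which is exactly \eqref{eq:ml}. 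I expect step (iii)--(iv) — the uniform control needed to integrate the second-order expansion to the limit, especially keeping track of integrability against $f'$ for $f=\log x$ (where $f'(x)=1/x$ is not integrable near $0$, so one genuinely needs the exponential factor $e^{-x^{-\alpha_0}}$ to kill the singularity) and the delicate $\rho=0$ boundary case — to be where the real technical effort lies, consistent with the authors' remark about the length of this verification.
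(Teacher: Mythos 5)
Your overall route is exactly the paper's: apply Theorem~\ref{theo:asydis:stat} with $\sigma_{r_n}=a_{r_n}$, check Conditions~\ref{cond:DA}--\ref{cond:bias} (with \ref{cond:alpha} trivial since $\alpha(\ell)=0$), and spend the real effort on Condition~\ref{cond:bias}, where your outline (negligibility of the truncation, integration by parts, the second-order expansion of $F^{r_n}(a_{r_n}x)$ with dominated convergence via Potter/Drees-type bounds, separate treatment near $0$ and $\infty$, the $\rho=0$ case, and the final Gamma-identity check $M(\alpha_0)B=\lambda B(\alpha_0,\rho)$) matches the paper's argument step for step. However, two of the ``easy'' verifications are not correct as you state them. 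First, Condition~\ref{cond:small} is \emph{not} immediate from $r_n\to\infty$, $k_n\to\infty$: what is needed is $\log k_n+r_n\log F(c)\to-\infty$, and your comparison ``exponential in $r_n$ versus polynomial in $n$'' does not deliver this — e.g.\ with $F(c)=e^{-1}$ and $r_n\sim\tfrac12\log n$ one has $k_n\to\infty$, $r_n\to\infty$, yet $k_nF^{r_n}(c)\to\infty$ and all block maxima fail to diverge with nonvanishing probability. The correct argument uses the growth restriction $\log k_n=o(r_n)$, equivalently \eqref{eq:klogk}, which is itself a consequence of the hypothesis \eqref{eq:ka} via Potter's theorem applied to the regularly varying sequence $|A(a_r)|$ (this is the paper's ``On the number of blocks'' remark); your sketch never invokes \eqref{eq:ka} at this point, so the step as written would fail.

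Second, your verification of Condition~\ref{cond:moment} only covers $0<\beta<\alpha_0$ in \eqref{eq:momall}. The bound $t^{\beta-1}\min(1,t^{-\alpha_0+\delta})$ is not integrable near $0$ when $\beta\le 0$, and ``$\Pr[M_{r_n}>a_{r_n}t]\le1$ for $t<1$'' is irrelevant for negative moments, which concern the \emph{lower} tail $\Pr[M_{r_n}\le a_{r_n}y]$ for small $y$. These negative moments are precisely what Condition~\ref{cond:moment} demands (the function $g_{\nu,\alpha_0}$ behaves like $x^{-\alpha_0(2+\nu)}$ near $0$), so this case cannot be waved away. The fix is the paper's supplementary Lemma~\ref{lem:blockmoments}: use \eqref{eq:scaling} together with Potter bounds on the regularly varying function $-\log F$ to obtain $F^{r_n}(a_{r_n}y)\le\exp\{-c(\delta)\,y^{-\alpha_0+\delta}\}$ for $y\in(c/a_{r_n},1]$, whose superpolynomial decay as $y\downarrow 0$ makes every negative moment uniformly bounded. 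With these two repairs (both available under the stated hypotheses), your argument coincides with the paper's proof.
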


We conclude this section with a series of remarks on the second-order Condition~\ref{cond:secor} and its link to the block-size condition in~\eqref{eq:ka} and the mean vector of the limiting distribution in~\eqref{eq:ml}.

\begin{figure}
\centering
\vspace{-0.8cm}
\includegraphics[width=0.48\textwidth]{./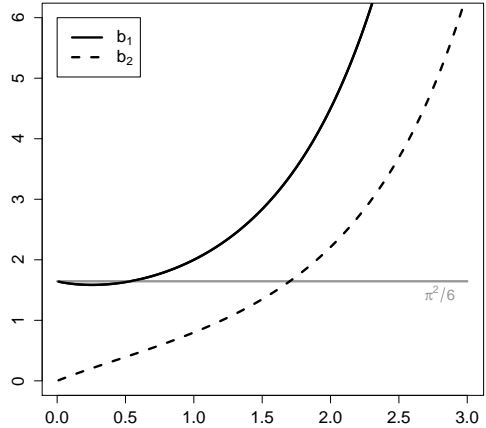}
\vspace{-.9cm}
\caption{Graphs of the functions $b_1$ and $b_2$ in \eqref{eq:b1} and \eqref{eq:b2}.}
\label{fig:asybias}
\vspace{-.4cm}
\end{figure}

\begin{remark}[Second-order regular variation]
\label{rem:secor}
Let $F$ satisfy \eqref{eq:RV}. For $x>0$ sufficiently large such that $F(x)>0$, define $L(x)$ by
\begin{align}
\label{eq:defL}
  - \log F(x) = x^{-\alpha_0} \, L(x).
\end{align}
In view of \eqref{eq:RV}, the function $L$ is slowly varying at infinity, that is,
\begin{equation*}
  \lim_{u \to \infty} \frac{L(ux)}{L(u)} = 1, \qquad x \in (0, \infty).
\end{equation*}
A second-order refinement of this would be that there exist $A : (0, \infty) \to (0, \infty)$ and $h : (0, \infty) \to \R$, the latter not identically zero, such that $\lim_{u \to \infty} A(u) = 0$ and
\begin{equation*}
  \lim_{u \to \infty} \frac{1}{A(u)} \left( \frac{L(ux)}{L(u)} - 1 \right) = h(x), \qquad x \in (0, \infty).
\end{equation*}
Writing $g(u) = A(u) \, L(u)$, Theorem~B.2.1 in \cite{dHF06} (see also \citealp{BGT87}, Section~3.6) implies that there exists $\rho \in \R$ such that $g$ and thus $A = g/L$ are regularly varying at infinity with index~$\rho$. Since $A$ vanishes at infinity, necessarily $\rho \le 0$. Furthermore, there exists $\kappa \in \R \setminus \{ 0 \}$ such that $h(x) = \kappa \, h_\rho(x)$ for $x \in (0, \infty)$, with $h_\rho$ as in \eqref{eq:htau}.
Incorporating the constant $\kappa$ into the function $A$, we can assume without loss of generality that $\kappa=1$ and we arrive at Condition~\ref{cond:secor}. The function $A$ then possibly takes values in $(-\infty,0)$ rather than in $(0, \infty)$.
\end{remark}

\begin{remark}[Asymptotic mean squared error]
According to \eqref{eq:ka} and \eqref{eq:ml}, the distribution of the estimation error $\hat{\alpha}_n - \alpha_0$ is approximately equal to
\[
  \mathcal{N} 
  \left( 
    -A(a_{r_n}) \, \frac{6}{\pi^2} \, b_1( \abs{\rho}/\alpha_0 ), \;
   \frac{r_n}{n} \, \frac{6}{\pi^2} \, \alpha_0^2
  \right).
\]
The asymptotic mean squared error is therefore equal to
\[
  \operatorname{AMSE}( \hat{\alpha}_n )
  =
  \operatorname{ABias}^2( \hat{\alpha}_n )
  +
  \operatorname{AVar}( \hat{\alpha}_n )
  =
  \abs{A(a_{r_n})}^2 \, \frac{36}{\pi^4} \, b_1( \abs{\rho}/\alpha_0 )^2 
  +
 \frac{r_n}{n} \frac{6}{\pi^2} \alpha_0^2.
\]
The choice of the block size $r_n$ (or, equivalently, the number of blocks $k_n$), thus involves a bias--variance trade-off; see Section~\ref{sec:ex}. Alternatively, if $\rho$ and $A(a_{r_n})$ could be estimated, then one could construct bias-reduced estimators, just as in the case of the Hill estimator (see, e.g., \citealp{Pen98}, among others) or probability weighted moment estimators \citep{CaiDehZho13}.
\end{remark}

\begin{remark}[On the number of blocks]
A version of \eqref{eq:ka} is used in \cite{dHF15} to prove asymptotic normality of probability weighted moment estimators. Equation~\eqref{eq:ka} also implies the following limit relation, which is imposed in \cite{Dom15} and which we will be needing later on as well:
\begin{equation}
\label{eq:klogk}
  \lim_{n \to \infty} \frac{k_n \log(k_n)}{n} = 0.
\end{equation}
Indeed, in view of Remark~\ref{rem:secor} and regular variation of $(a_n)_{n \in \mathbb{N}}$, the sequence $(|A(a_r)|)_{r \in \mathbb{N}}$ is regularly varying at infinity. Potter's theorem \citep[Theorem~1.5.6]{BGT87} then implies that there exists $\beta > 0$ such that $r^{-\beta} = o(|A(a_r)|)$ as $r \to \infty$. But then $\sqrt{k_n} (r_n)^{-\beta} = \sqrt{k_n} \, o(|A(a_{r_n})|) = o(1)$ by \eqref{eq:ka} and thus $k_n^{1/2+\beta} / n^\beta = o(1)$ as $n \to \infty$. We obtain that $k_n^{1+1/(2\beta)}/n = o(1)$, which implies~\eqref{eq:klogk}.
\end{remark}

\begin{remark}[No asymptotic bias]
If $\lambda = 0$ in \eqref{eq:ka}, then the limiting normal distribution in \eqref{eq:ml} is centered and the maximum likelihood estimator is said to be asymptotically unbiased. If the index, $\rho$, of regular variation of the auxiliary function $\lvert A \rvert$ is strictly negative (see Remark~\ref{rem:secor}), then a sufficient condition for $\lambda = 0$ to occur is that $k_n = O( n^{\beta} )$ for some $\beta < \lvert\rho\rvert / (\alpha_0/2 + \lvert\rho\rvert)$.
\end{remark}

\section{Examples and finite-sample results}
\label{sec:ex}

\subsection{Verification of conditions in a moving maximum model} \label{subsec:movmax}

For many stationary time series models, the distribution of the sample maximum is a difficult object to work with. This is true even for linear time series models, since the maximum operator is non-linear. In such cases, checking the conditions of Section~\ref{sec:stat} may be a hard or even impossible task. An exception occurs for moving maximum models, where the sample maximum can be linked directly to maxima of the innovation sequence.

Let $(Z_t)_{t\in \Z}$ be a sequence of independent and identically distributed random variables with common distribution function $F$ in the domain of attraction of the Fr\'echet distribution with shape parameter $\alpha_0>0$, that is, such that \eqref{eq:DA} is satisfied for some sequence $a_n \to \infty$. Let $p \in \N$, $p \ge 2$, be fixed and let $b_1, \dots, b_p$ be nonnegative constants, $b_1 \ne 0 \ne b_p$, such that $\sum_{i=1}^p b_i=1$.  We consider the moving maximum process $\xi_t$ of order $p$, defined by
\[
\xi_t = \max\{b_1 Z_t, b_2 Z_{t-1}, \dots, b_p Z_{t-p+1}\}, \qquad t\in \Z.
\]
A simple calculation (see also the proof of Lemma~\ref{lem:movmax} for the stationary distribution of $\xi_t$) shows that the extremal index of $(\xi_t)_{t\in\Z}$ is equal to
\[
\theta= \left\{ \textstyle \sum_{i=1}^p b_i^{\alpha_0} \right\}^{-1} {b_{(p)}^{\alpha_0}} ,
\]
where $b_{(p)}= \max_{i=1}^p b_i$.
Let $\sigma_n= b_{(p)} a_n$.   The proof of the following lemma is given in Section~\ref{subsec:proofs:ex} in the supplementary material.

\begin{lemma} \label{lem:movmax}
The stationary time series $(\xi_t)_{t\in\Z}$ satisfies Conditions~\ref{cond:DA}, \ref{cond:alpha} and \ref{cond:moment}. If additionally \eqref{eq:klogk} is met, then Condition~\ref{cond:small} is satisfied as well. Finally, if~$F$ satisfies the Second-Order Condition \ref{cond:secor},  if \eqref{eq:ka} is met and if $k_n=o(n^{2/3})$ as $n \to \infty$,  then Condition~\ref{cond:bias} is also satisfied, with $B(f)$ denoting the same limit as  in the iid case, that is, $B(f) = \bm \beta$ with $\bm \beta$ as  in \eqref{eq:beta}.
\end{lemma}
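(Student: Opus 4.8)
The plan is to verify the five conditions in turn, exploiting the key structural fact that for a moving maximum process of order $p$, a block maximum $M_{r,i} = \max_{(i-1)r < t \le ir} \xi_t$ is sandwiched between two maxima of the i.i.d.\ innovation sequence: writing $\tilde M_{r,i} = \max\{ Z_t : (i-1)r - p + 2 \le t \le ir \}$ and $\underline M_{r,i} = \max\{ Z_t : (i-1)r + 1 \le t \le ir - p + 1\}$, one has $b_{(p)} \underline M_{r,i} \le M_{r,i} \le b_{(p)} \tilde M_{r,i}$, since each $\xi_t$ is a convex combination (under the max operation) dominated by $b_{(p)}$ times the largest relevant innovation and bounded below by $b_{(p)} Z_{t}$ for the coordinate achieving the weight $b_{(p)}$. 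The two sandwiching variables are maxima of $r + O(1)$ i.i.d.\ copies of $F$, so all their distributional properties follow from the i.i.d.\ theory after rescaling by $\sigma_n = b_{(p)} a_n$.

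First, Condition~\ref{cond:DA}: the extremal-index computation (given in the proof, or via Remark~\ref{rem:extrIndex}) together with $F \in \mathcal{D}(\Frechet(\alpha_0))$ gives $M_n / \sigma_n \weak \Frechet(\alpha_0,1)$, and the regularity $\sigma_{m_n}/\sigma_n \to 1$ for $m_n/n \to 1$ follows from regular variation of $(a_n)$ with index $1/\alpha_0$. Second, Condition~\ref{cond:alpha}: since $\xi_t$ depends only on $Z_{t-p+1}, \dots, Z_t$, the process is $p$-dependent, so $\alpha(\ell) = 0$ for $\ell \ge p$; hence $\alpha(r_n) = 0$ eventually and \eqref{eq:mixing:kdelta} holds trivially for any $\omega > 0$. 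Third, Condition~\ref{cond:moment} (equivalently the sufficient condition \eqref{eq:momall}): using $M_n \vee c \le b_{(p)}\tilde M_n \vee c$ with $\tilde M_n$ a maximum of $n + p - 1$ i.i.d.\ innovations, the moment bound $\limsup_n \Exp[\{(M_n \vee c)/\sigma_n\}^\beta] < \infty$ for $\beta < \alpha_0$ reduces to the corresponding bound for maxima of i.i.d.\ variables in the Fr\'echet domain, which is standard (and is established within the proof of Theorem~\ref{theo:ml}). Fourth, Condition~\ref{cond:small}: by the sandwich, $\min_i M_{r_n,i} \ge b_{(p)} \min_i \underline M_{r_n,i}$, and $\underline M_{r_n,i}$ are maxima of blocks of size $r_n - p + 1 \sim r_n$; the probability that the minimum of $k_n$ such block maxima is below a fixed level is controlled exactly as in the i.i.d.\ case, where it tends to $0$ precisely under \eqref{eq:klogk} (this is the reason \eqref{eq:klogk} is invoked as an extra hypothesis).

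The substantive part is Condition~\ref{cond:bias}, i.e.\ computing the asymptotic bias $\sqrt{k_n}(\Exp[f((M_{r_n}\vee c)/\sigma_{r_n})] - \int f\, p_{\alpha_0,1})$ for $f \in \mathcal{H}$. Here the sandwich alone is not enough: although $b_{(p)}\underline M_{r_n}$ and $b_{(p)}\tilde M_{r_n}$ both rescale to $\Frechet(\alpha_0,1)$, their biases at the $\sqrt{k_n}$ scale need not agree with that of $M_{r_n}$, so one must compute directly. The strategy is to write the exact stationary distribution of $M_{r_n}$: one shows (as in Lemma~\ref{lem:movmax}'s proof of the extremal index) that $\Pr(M_{r_n} \le x) = \prod_{j} F(x / (b_{(p)} \cdot \text{something}))$ up to $O(p)$ edge terms, i.e.\ $\Pr(M_{r_n} \le x) = F^{r_n - p + 1}(x/b_{(p)}) \cdot \prod_{\text{edge}} F(x / (b_j a)) = F^{r_n}(x/b_{(p)})(1 + o(1))$ with the $o(1)$ uniform enough to control the bias. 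Then $\Exp[f((M_{r_n}\vee c)/\sigma_{r_n})]$ differs from $\Exp[f((\hat M_{r_n}\vee c)/a_{r_n})]$, where $\hat M_{r_n}$ is the maximum of $r_n$ i.i.d.\ innovations, only by terms of order $O(p/r_n) = o(1/\sqrt{k_n})$ provided $k_n = o(r_n^2)$, i.e.\ $k_n = o(n^{2/3})$ — which is exactly the extra hypothesis imposed. Under Condition~\ref{cond:secor} and \eqref{eq:ka}, the i.i.d.\ bias computation from the proof of Theorem~\ref{theo:ml} (more than three pages, as the authors note) then yields $B(f) = \bm\beta$ with $\bm\beta$ as in \eqref{eq:beta}.

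The main obstacle is the last step: making the approximation $\Pr(M_{r_n} \le x) \approx F^{r_n}(x/b_{(p)})$ quantitative enough, uniformly over the range of $x$ that matters for the $f\in\mathcal H$ (which includes the unbounded functions $\log x$ and $x^{-\alpha_0}$, so both tails of the Fr\'echet attractor must be handled), that the edge-effect error is genuinely $o(1/\sqrt{k_n})$; this is where the condition $k_n = o(n^{2/3})$ is consumed, and where one must borrow the moment control from Condition~\ref{cond:moment} to integrate the discrepancy against the possibly-unbounded $f$. Once the reduction to the i.i.d.\ maximum is in place, the remaining bias analysis is quoted verbatim from the proof of Theorem~\ref{theo:ml}.
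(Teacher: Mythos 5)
Your outline follows essentially the same route as the paper for all five conditions: exploit the exact product structure of the law of a block maximum of a moving maximum process (an iid ``bulk'' $F^{r_n-p+1}$ times $O(p)$ edge factors), get Condition~\ref{cond:DA} from the extremal index, Condition~\ref{cond:alpha} from $p$-dependence, Conditions~\ref{cond:moment} and~\ref{cond:small} from elementary bounds (your sandwich $b_{(p)}\underline M_{r,i}\le M_{r,i}\le b_{(p)}\tilde M_{r,i}$ is a legitimate alternative to the paper's direct use of the product form and yields the same estimates), and for Condition~\ref{cond:bias} isolate the edge contribution, show it is negligible at scale $\sqrt{k_n}$ under $\sqrt{k_n}=o(r_n)$ (equivalently $k_n=o(n^{2/3})$), and quote the iid computation from Theorem~\ref{theo:ml} for the bulk. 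Your identification of where the hypothesis $k_n=o(n^{2/3})$ is consumed is exactly right.

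The gap is that the step you yourself flag as ``the main obstacle'' is precisely the content of the paper's proof, and your proposal does not supply it; moreover the tool you point to (``borrow the moment control from Condition~\ref{cond:moment}'') is not what makes it work. The paper (case $p=2$) writes $\Pr\{(M_{r_n}\vee c)/\sigma_{r_n}\le y\}=A_n(y)G_n(y)$ with $A_n(y)=F(a_{r_n}y\,b_{(2)}/b_{(1)})$ and $G_n(y)=F^{r_n}(a_{r_n}y)\ind(y\ge c/\sigma_{r_n})$, represents the bias as $-\int_0^\infty \sqrt{k_n}\{A_n(y)G_n(y)-G(y)\}f'(y)\,dy$, and splits it into the iid part $\int \sqrt{k_n}A_n(y)\{G_n(y)-G(y)\}f'(y)\,dy$ (handled exactly as $J_{n1},J_{n2}$ in the proof of Theorem~\ref{theo:ml}, using $0\le A_n\le 1$ and $A_n\to1$ pointwise) and the edge part $\int \sqrt{k_n}\{A_n(y)-1\}G(y)f'(y)\,dy$. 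The latter is controlled by the factorization $\sqrt{k_n}\,|1-A_n(y)|=\tfrac{\sqrt{k_n}}{r_n}\cdot\tfrac{1-A_n(y)}{-\log A_n(y)}\cdot\tfrac{-\log A_n(y)}{-\log F(a_{r_n})}$, where the middle factor is at most $1$ and the last factor equals $(yb_{(2)}/b_{(1)})^{-\alpha_0}L(a_{r_n}yb_{(2)}/b_{(1)})/L(a_{r_n})$, bounded uniformly in $y$ by a multiple of $y^{-\alpha_0}(y^{\delta}\vee y^{-\delta})$ via Potter's theorem; integrating against $|f'(y)|\lesssim y^{-\alpha_0-\delta-1}\wedge y^{-1}$ with the damping $\exp(-y^{-\alpha_0})$ gives an $O(\sqrt{k_n}/r_n)=o(1)$ bound. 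So the uniform-in-$y$ control comes from Potter bounds on the slowly varying part of $-\log F$ (i.e.\ from Condition~\ref{cond:secor} and regular variation), not from Condition~\ref{cond:moment}; without this bound, your reduction of the moving-maximum bias to the iid bias remains an assertion rather than a proof.
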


As a consequence, Theorem \ref{theo:asydis:stat} may be applied and the asymptotic bias of the maximum likelihood estimator is the same as specified in Theorem~\ref{theo:ml} for the case of independent and identically distributed random variables.

\subsection{Simulation results}
\label{subsec:simul}
We report on the results of a simulation study, highlighting some interesting features regarding the finite-sample performance of the maximum likelihood estimator. Attention is restricted to the estimation of the shape parameter, and particular emphasis is given to a comparison with the common Hill estimator, which is based on the competing peaks-over-threshold method. Its variance is of the order $O(k^{-1})$, where $k$ is the number of upper order statistics taken into account for its calculation. The Hill estimator's asymptotic variance is given by $\alpha_0^2$, which is larger than the asymptotic variance $(6/\pi^2) \times \alpha_0^2$ of the block maxima maximum likelihood estimator. Furthermore, numerical experiments (not shown) involving the probability weighted moment estimator showed a variance that was higher, in all cases considered, than the one of the maximum likelihood estimator.

We consider three time series models for $(\xi_t)_{t\in\Z}$: independent and identically distributed random variables, the moving maximum process from Section~\ref{subsec:movmax}, and the absolute values of a GARCH(1,1) time series. In the first two models, three choices are considered for the distribution function $F$ of either the variables $\xi_t$ in the first model and the innovations $Z_t$ in the second model: absolute values of a Cauchy-distribution, the standard Pareto distribution and the Fr\'echet(1,1) distribution itself. All three distribution functions are attracted to the Fr\'echet distribution with $\alpha_0=1$. For the moving maximum process, we fix $p=4$ and $b_j=j/10$ for $j\in\{1,2,3,4\}$. The GARCH(1,1) model is based on standard normal innovations, that is, $\xi_t = \abs{Z_t}$, where $Z_t$ is the stationary solution of the equations
\begin{equation}
\label{eq:GARCH}
  \left\{
    \begin{array}{rcl}
      Z_t &=& \varepsilon_t \sigma_t , \\
      \sigma_t^2 &=& \lambda_0 + \lambda_1 Z_{t-1}^2 + \lambda_2 \sigma_{t-1}^2,
    \end{array}
  \right.
\end{equation}
with $\varepsilon_t$, $t \in \Z$, independent standard normal random variables.  The parameter vector $(\lambda_0, \lambda_1, \lambda_2)$ is set to either $(0.5, 0.367, 0.367)$ or $(0.5, 0.08, 0.91)$. By \cite{MikSta00}, the stationary distribution associated to any of these two models is attracted to the Fr\'echet distribution with shape parameter being (approximately) equal to $\alpha_0=5$.  

We generate samples from all of the afore-mentioned models for a fixed sample size of $n=1\,000$. Based on $N=3\,000$ Monte Carlo repetitions, we obtain empirical estimates of the finite sample bias, variance and mean squared error (MSE) of the competing estimators.  The results are summarized in Figure~\ref{fig:mse} for the iid and the moving maxima model, and in Figure~\ref{fig:mse2} for the GARCH-model. Additional details for the case of independent random sampling from the absolute value of a Cauchy distribution are provided in the Supplement, Section~\ref{sec:simulextra}.

In general, (most of) the graphs nicely reproduce the bias-variance tradeoff, its characteristic form however varying from model to model. Consider the iid scenario:
since the Hill estimator is essentially the maximum likelihood estimator in the Pareto family, it is to be expected that it outperforms the block maxima estimator. On the other hand, by max-stability of the Fr\'echet family, the block maxima estimator should outperform the Hill estimator for that family.  These expectations are confirmed by the simulation results in the left column of Figure~\ref{fig:mse}. For the Cauchy distribution, it turns out that the block maxima maximum likelihood estimator shows a better performance.

Now, consider the moving maxima time series scenarios (right column in Figure~\ref{fig:mse}). Compared to the iid case, we observe an increase in the mean squared error (note that the scale on the axis of ordinates is row-wise identical). The block maxima method clearly outperforms the Hill estimator, except for the Pareto model. The big increase in relative performance is perhaps not too surprising, as the data points from a moving maximum process are already  (weighted) maxima, which principally favors the block maxima method with small block sizes.

Finally, consider the GARCH models in Figure~\ref{fig:mse2}. While, as in line with the theoretical findings, the variance of the block maxima estimator is smaller than the one of the Hill estimator, the squared bias turns out to be substantially higher for a large range of values for $k$. The MSE-optimal point is smaller for the Hill estimator.

\begin{figure}[h!]
\begin{center}
\vspace{-0.5cm}

\mbox{
\includegraphics[width=0.44\textwidth]{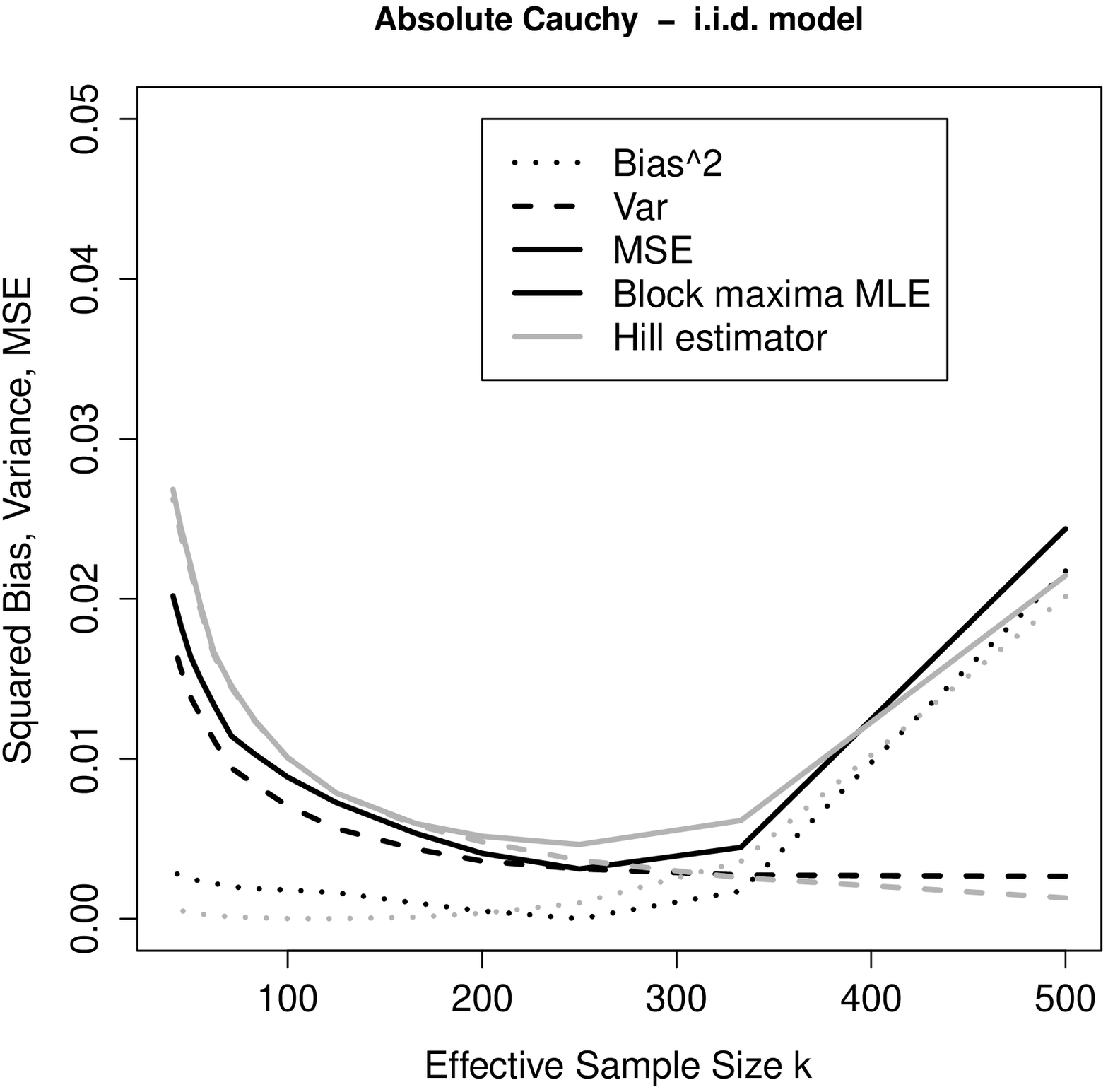}
\includegraphics[width=0.44\textwidth]{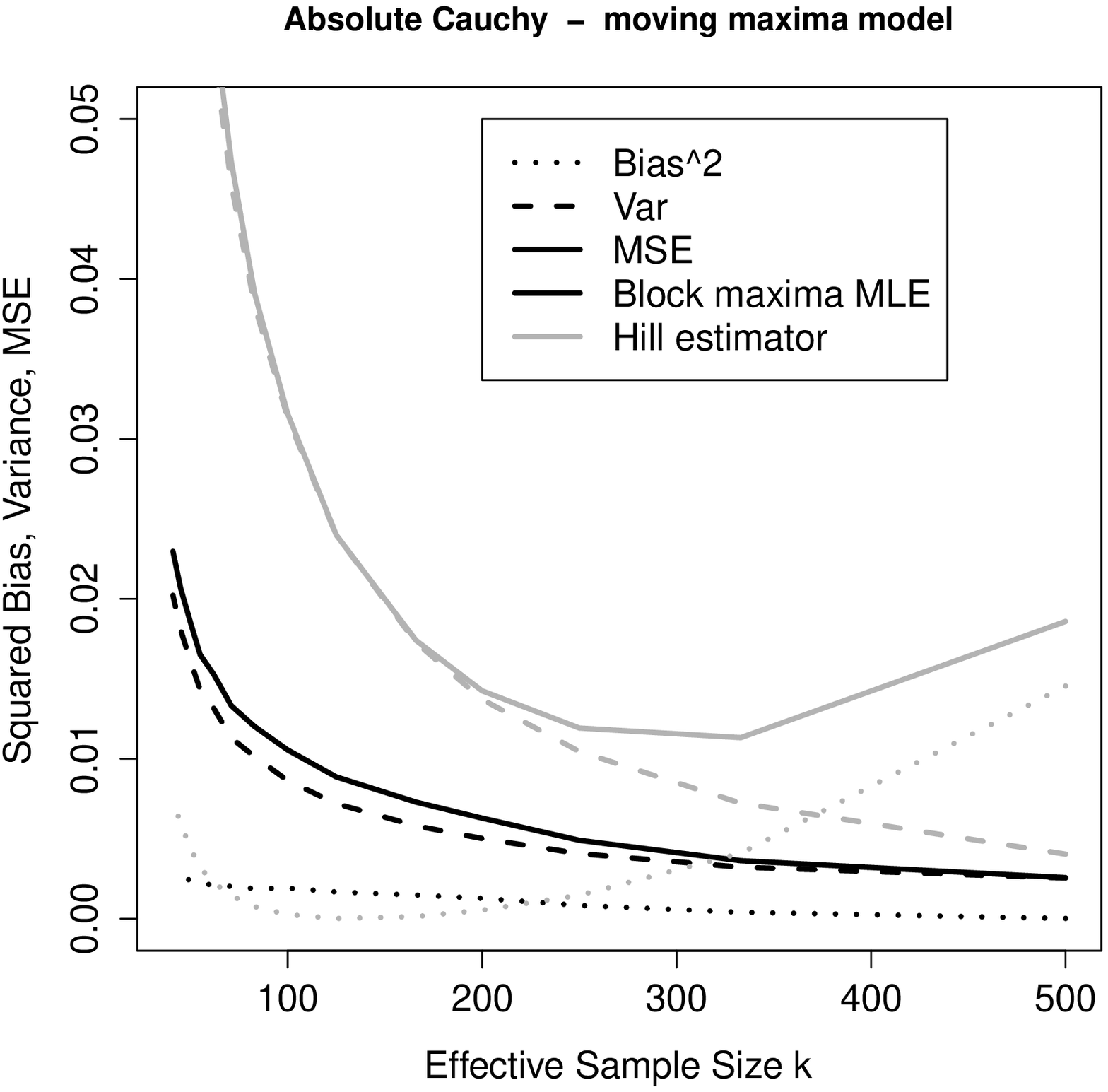}
}
\vspace{-0.2cm}

\mbox{
\includegraphics[width=0.44\textwidth]{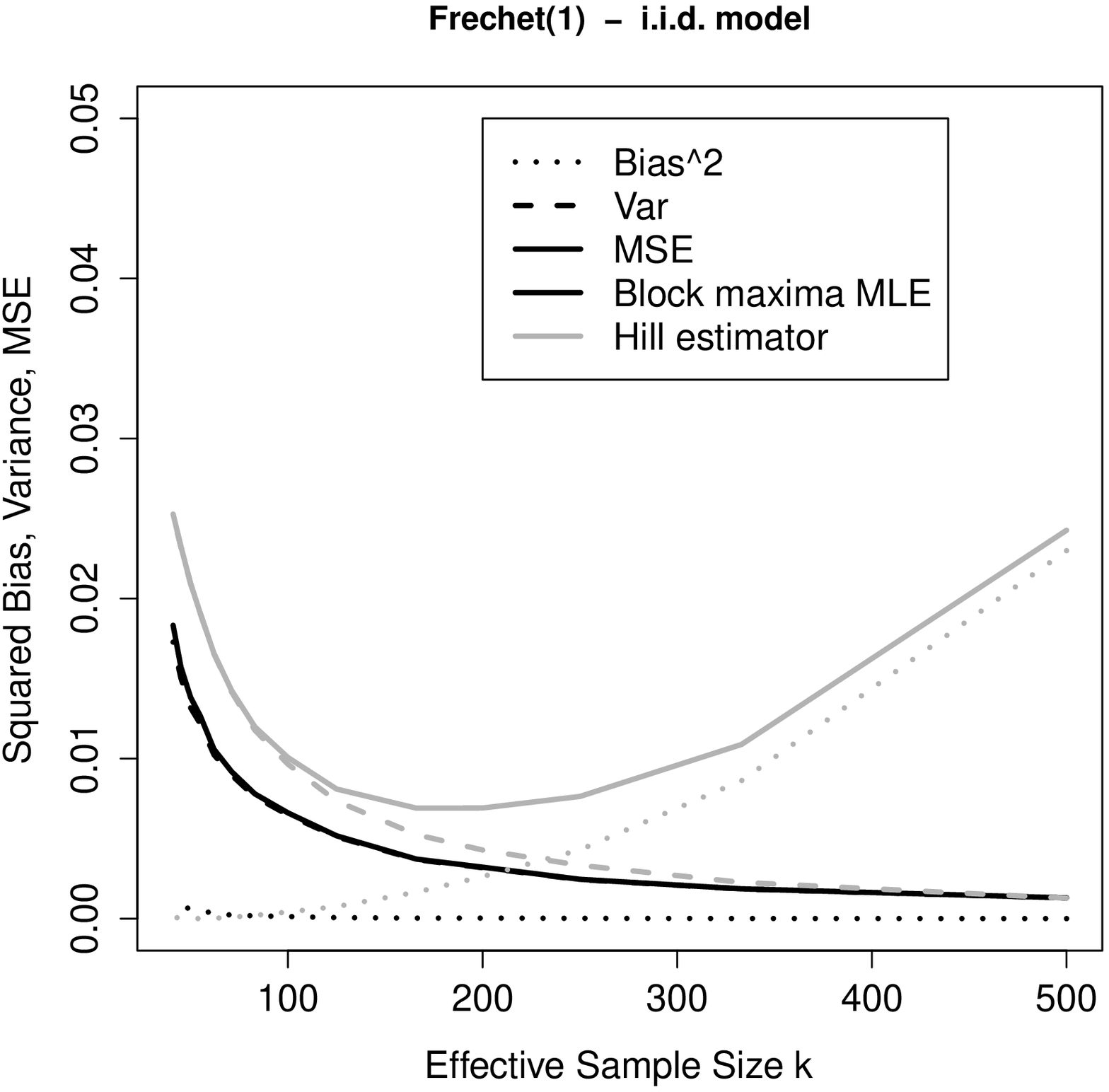}
\includegraphics[width=0.44\textwidth]{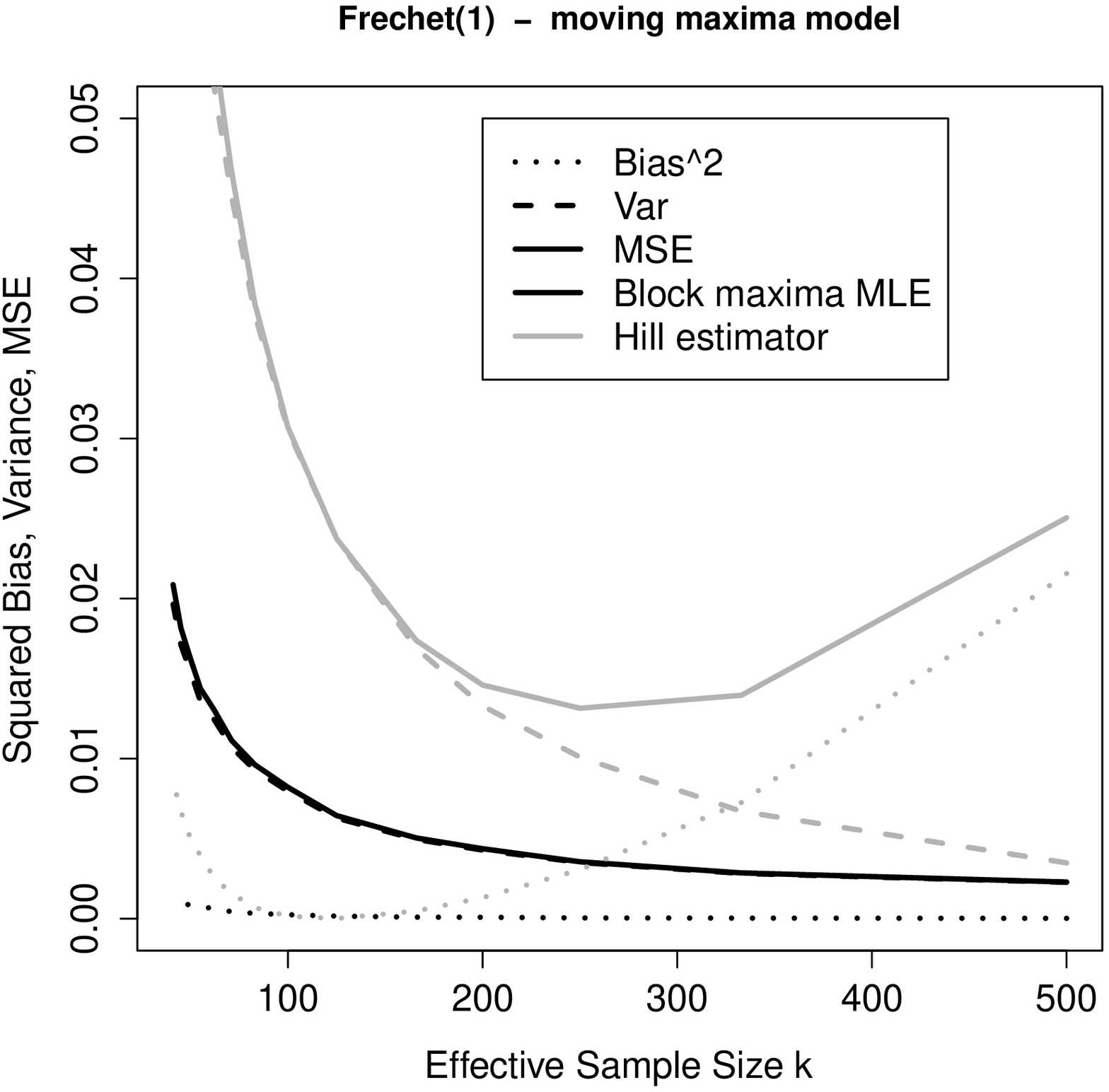}
}
\vspace{-0.2cm}

\mbox{\includegraphics[width=0.44\textwidth]{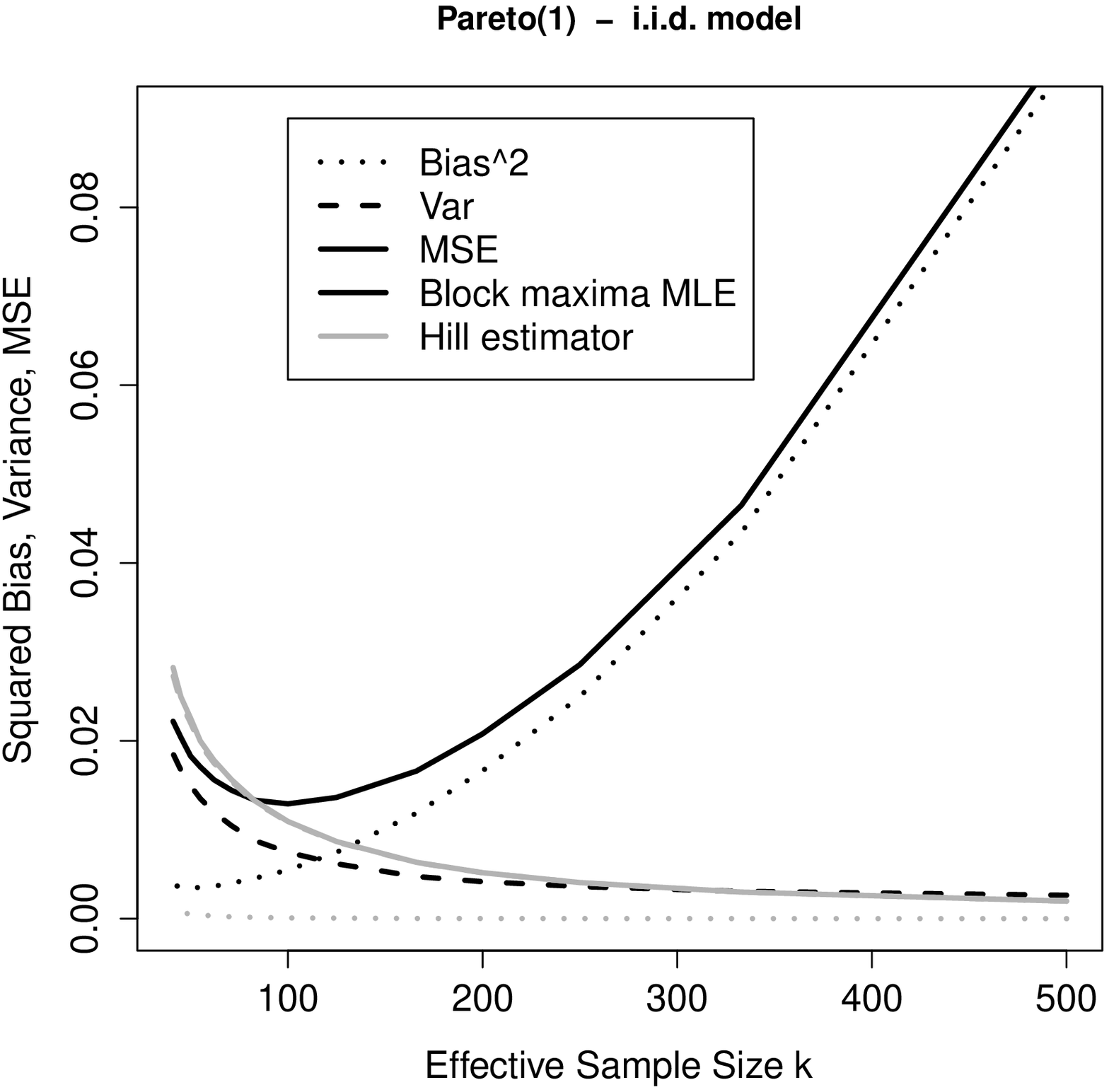}
\includegraphics[width=0.44\textwidth]{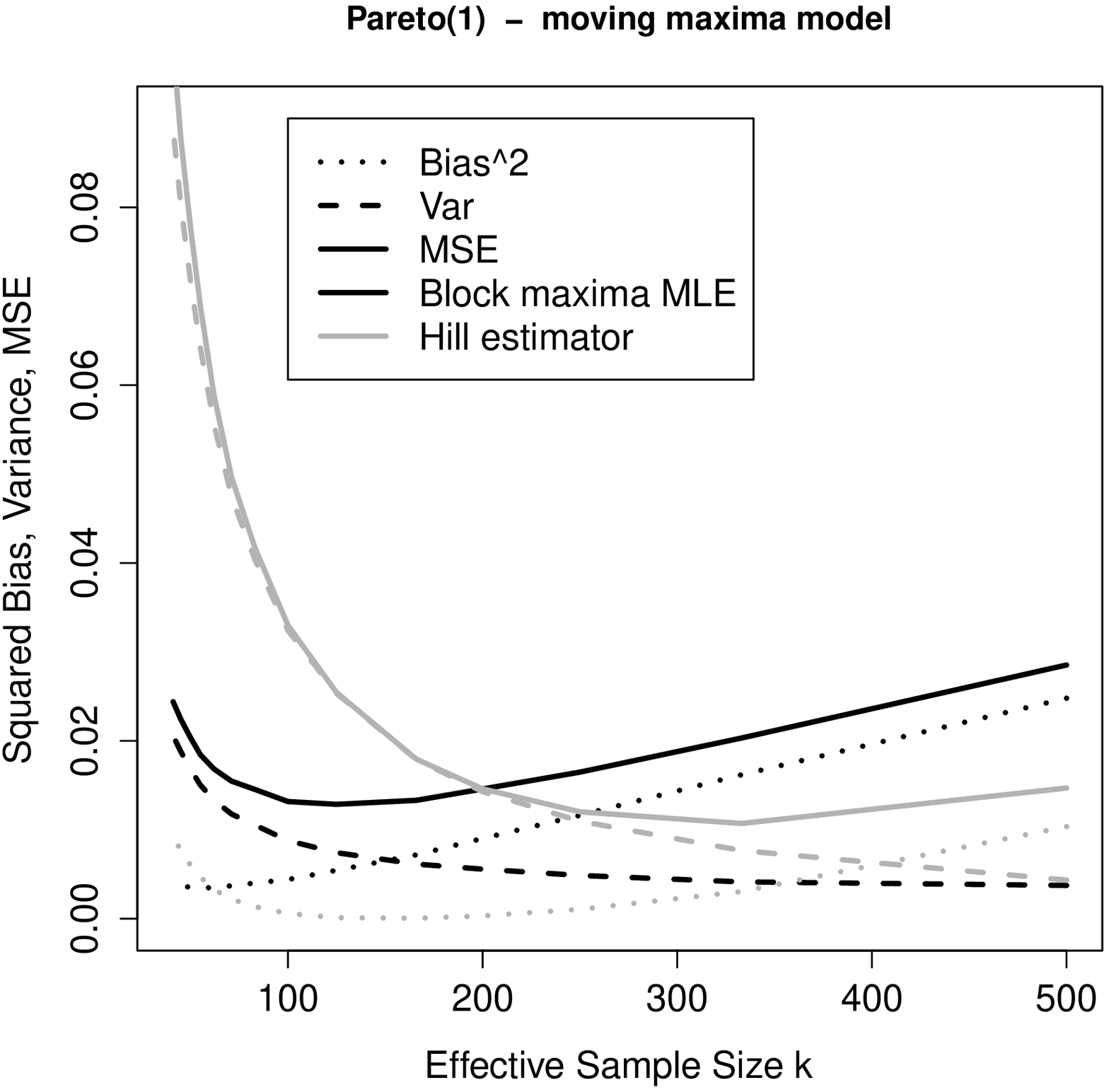}
}
\end{center}

\vspace{-0.65cm}
\caption{Simulation results (Section~\ref{subsec:simul}). Effective sample size refers to the number of blocks (block maxima MLE) or the number of upper order statistics (Hill estimator). Time series models:  iid (left) and moving maximum model (right). Innovations: absolute values of Cauchy (top), unit Fr\'echet (middle) and unit Pareto (bottom) random variables. Block sizes $r \in \{2, 3, \dots , 24\}$, resulting in $k\in \{500, 333, \dots, 41\}$ blocks.}
\label{fig:mse}
\vspace{-.3cm}
\end{figure}

\begin{figure}[h!]
\begin{center}
\includegraphics[width=0.45\textwidth]{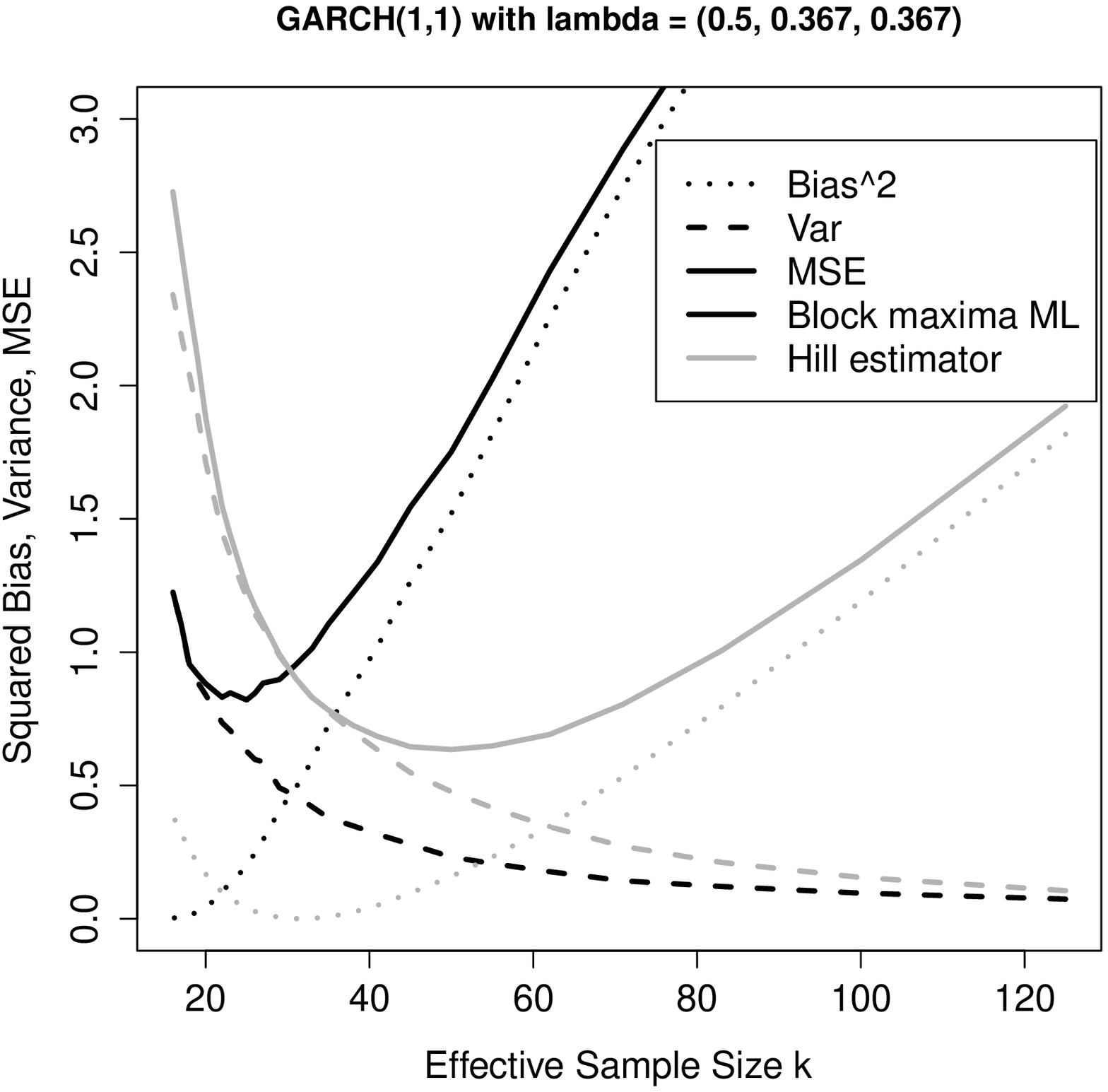}
\includegraphics[width=0.45\textwidth]{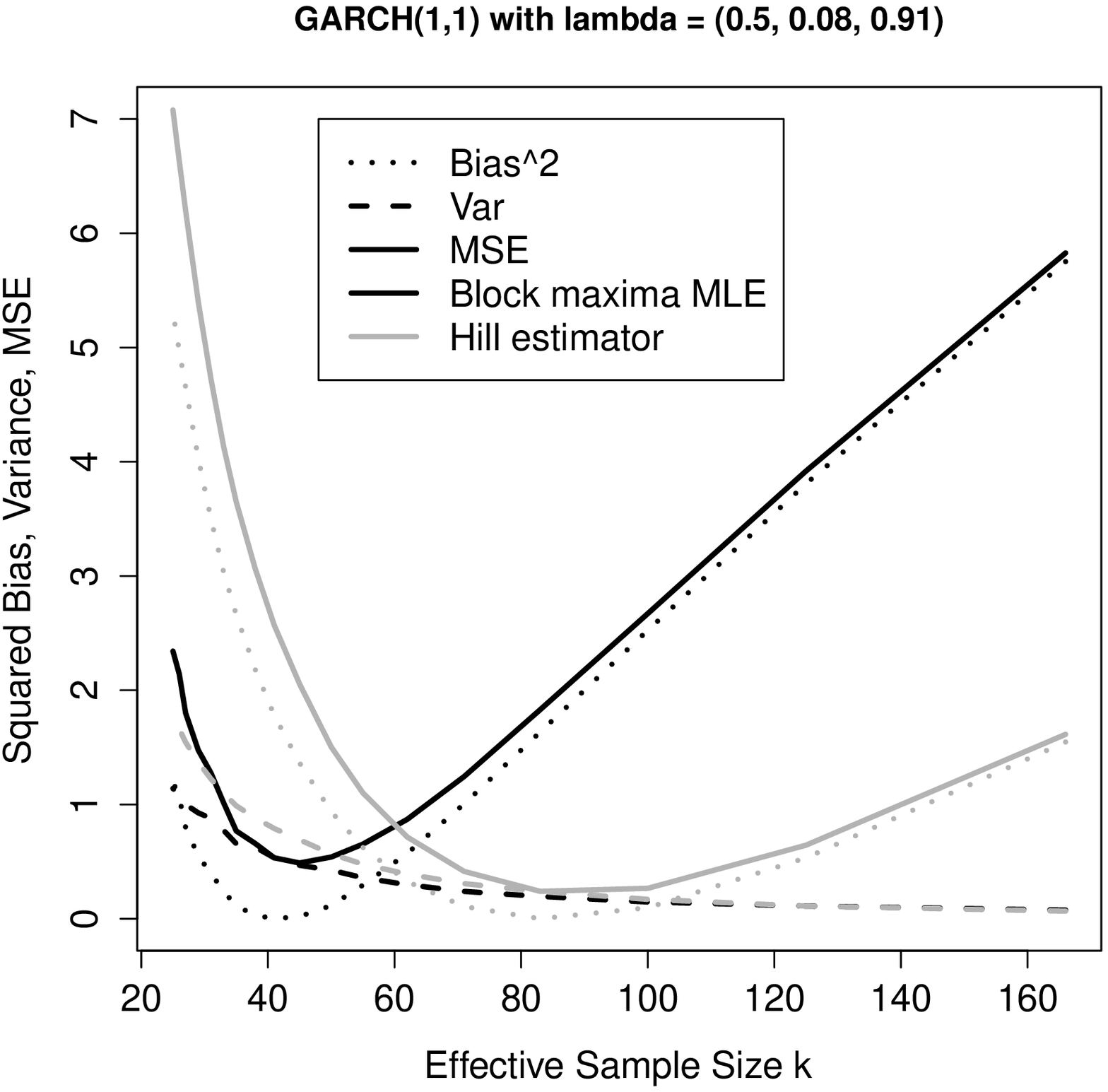}
\end{center}
\vspace{-0.7cm}

\caption{Simulation results (Section~\ref{subsec:simul}). Effective sample size refers to the number of blocks (block maxima MLE) or the number of upper order statistics (Hill estimator). 
Both panels refer to the GARCH(1,1) model in \eqref{eq:GARCH}, with $(\lambda_0, \lambda_1, \lambda_2)$ equal to $(0.5, 0.367, 0.367)$ on the left and to $(0.5, 0.08, 0.91)$ on the right.}
\label{fig:mse2}
\vspace{-.3cm}
\end{figure}

\appendix

\section{Proofs}
\label{sec:proofs}

\subsection{Proofs for Section~\ref{sec:tri}}
\label{subsec:proofs:tri}

\begin{proof}[Proof of Lemma~\ref{lem:existsUnique}]
The proof extends the development in Section~2 of \cite{BalKat08}. First, fix $\alpha > 0$ and consider the function $0 < \sigma \mapsto L(\alpha, \sigma \mid \bm{x})$. By Equation~\eqref{eq:score:sigma}, its derivative is equal to
\[
  \partial_\sigma L(\alpha, \sigma \mid \bm{x})
  = \sum_{i=1}^k \partial_\sigma \ell_\theta( x_i )
  = (\alpha/\sigma) \left(k - \sigma^\alpha \sum_{i=1}^k x_i^{-\alpha}\right).
\]
We find that $\partial_\sigma L(\alpha, \sigma \mid \bm{x})$ is positive, zero, or negative according to whether $\sigma$ is smaller than, equal to, or larger than ${\sigma}( \alpha \mid \bm{x} )$, respectively. In particular, for fixed $\alpha$, the expression $L(\alpha, \sigma \mid \bm{x})$ is maximal at $\sigma$ equal to ${\sigma}( \alpha \mid \bm{x} )$. Hence we need to find the maximum of the function $0 < \alpha \mapsto L( \alpha, {\sigma}(\alpha \mid \bm{x}) \mid \bm{x} )$. By \eqref{eq:score:alpha}, its derivative is given by
\begin{align*}
  \frac{d}{d \alpha} L( \alpha, {\sigma}(\alpha \mid \bm{x}) \mid \bm{x} )
  = \sum_{i=1}^k \partial_\alpha \ell_{\alpha, \sigma}(x_i) \bigg|_{\sigma = {\sigma}(\alpha \mid \bm{x})}
  + \sum_{i=1}^k \partial_\sigma \ell_{\alpha, \sigma}(x_i) \biggl|_{\sigma = {\sigma}(\alpha \mid \bm{x})} \times \frac{d}{d\alpha} {\sigma}( \alpha \mid \bm{x} ).
\end{align*}
The second sum is equal to zero, by definition of ${\sigma}( \alpha \mid \bm{x} )$. We obtain
\[
  \frac{d}{d \alpha} L( \alpha, {\sigma}(\alpha \mid \bm{x}) \mid \bm{x} )
  = k \, \Psi_k( \alpha \mid \bm{x} ),
\]
with $\Psi_k$ as in \eqref{eq:Psik}. This is the same expression as Eq.~(2.3) in \cite{BalKat08}, with their $x_i$ replaced by our $x_i^{-1}$. 
Differentiating once more with respect to $\alpha$, we obtain that
\begin{align}
\label{eq:ddLalpha}
  \frac{d^2}{d \alpha^2} L( \alpha, {\sigma}(\alpha \mid \bm{x}) \mid \bm{x} ) 
  = - \frac{k}{\alpha^2} - k \frac{\sum_{i=1}^k x_i^{-\alpha} (\log(x_i))^2 \sum_{i=1}^k x_i^{-\alpha} - \left( \sum_{i=1}^k x_i^{-\alpha} \log(x_i) \right)^2}{\left( \sum_{i=1}^k x_i^{-\alpha} \right)^2}.
\end{align}
By the Cauchy--Schwartz inequality, the numerator of the big fraction is nonnegative, whence
\[
  \frac{d^2}{d \alpha^2} L( \alpha, {\sigma}(\alpha \mid \bm{x}) \mid \bm{x} )
  \le - \frac{k}{\alpha^2} < 0.
\]
Hence, $\alpha \mapsto \Psi_k(\alpha\mid \bm x)$ is strictly decreasing.
For $\alpha \to 0$, this function diverges to $\infty$, whereas for $\alpha \to \infty$, it converges to $\log( \min(x_1, \ldots, x_k) ) - k^{-1}\sum_{i=1}^k \log(x_i)$, which is less than zero given the assumptions on $x_1, \ldots, x_k$. Hence, there exists a unique $\hat{\alpha}( \bm{x} ) \in (0, \infty)$ such that this function is zero. We conclude that the function $\theta \mapsto L(\theta \mid \bm{x})$ admits a unique maximum at $\hat{\theta}( \bm{x} )$.
\end{proof}

Fix $\alpha_0 \in (0, \infty)$. Let $P$ denote the Fr\'echet distribution with parameter $\theta_0 = (\alpha_0, 1)$, with support $\mathcal{X} = (0, \infty)$. The tentative limit of the functions $\alpha \mapsto \Psi_k( \alpha \mid \bm{x} )$ is the function
\[
  \Psi( \alpha ) 
  = \frac{1}{\alpha} + \frac{\int_0^\infty x^{-\alpha} \log(x) \, \diff P(x)}{\int_0^\infty x^{-\alpha} \, \diff P(x)} 
  - \int_0^\infty \log(x)\, \diff P(x).
\]
Let $\Gamma$ be the gamma function and let $\psi = \Gamma' / \Gamma$ be the digamma function. 

\begin{lemma}
\label{lem:Psi}
Fix $\alpha_0 \in (0, \infty)$. We have
\begin{equation}
\label{eq:Psi}
  \Psi(\alpha) = \frac{1}{\alpha_0} \bigl( \psi(1) - \psi(\alpha/\alpha_0) \bigr), 
  \qquad \alpha \in (0, \infty).
\end{equation}
As a consequence, $\Psi : (0, \infty) \to \R$ is a decreasing bijection with a unique zero at $\alpha = \alpha_0$.
\end{lemma}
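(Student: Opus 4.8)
The plan is to evaluate the three integrals appearing in the definition of $\Psi(\alpha)$ explicitly in terms of the gamma and digamma functions, by reducing each of them to the integral defining $\Gamma$ through a single change of variables. Writing $p_{\alpha_0,1}(x) = \alpha_0 e^{-x^{-\alpha_0}} x^{-\alpha_0-1}$ on $(0,\infty)$, I would substitute $u = x^{-\alpha_0}$, equivalently $x = u^{-1/\alpha_0}$, which is a bijection of $(0,\infty)$ onto itself and turns $\diff P(x)$ into $e^{-u}\,\diff u$. Under this substitution $x^{-\alpha}$ becomes $u^{\alpha/\alpha_0}$ and $\log x$ becomes $-\alpha_0^{-1}\log u$, so that
\begin{align*}
  \int_0^\infty x^{-\alpha}\,\diff P(x) &= \int_0^\infty u^{\alpha/\alpha_0} e^{-u}\,\diff u = \Gamma\bigl(1 + \alpha/\alpha_0\bigr), \\
  \int_0^\infty x^{-\alpha}\log x\,\diff P(x) &= -\frac{1}{\alpha_0}\int_0^\infty u^{\alpha/\alpha_0}(\log u)\, e^{-u}\,\diff u = -\frac{1}{\alpha_0}\Gamma'\bigl(1 + \alpha/\alpha_0\bigr),
\end{align*}
all integrals being finite since $\alpha/\alpha_0 > 0 > -1$; the second identity follows from differentiating the first under the integral sign. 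Specialising to $\alpha = 0$ gives in particular $\int_0^\infty \log x\,\diff P(x) = -\alpha_0^{-1}\Gamma'(1) = -\alpha_0^{-1}\psi(1)$, since $\Gamma(1) = 1$.

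Substituting these evaluations into the definition of $\Psi$ and using $\psi = \Gamma'/\Gamma$ yields
\[
  \Psi(\alpha) = \frac{1}{\alpha} - \frac{1}{\alpha_0}\psi\!\left(1 + \frac{\alpha}{\alpha_0}\right) + \frac{1}{\alpha_0}\psi(1).
\]
It then remains only to simplify the first two terms, which I would do with the functional equation $\psi(1+z) = \psi(z) + 1/z$ (valid for $z>0$) applied at $z = \alpha/\alpha_0$: this gives $\alpha_0^{-1}\psi(1 + \alpha/\alpha_0) = \alpha_0^{-1}\psi(\alpha/\alpha_0) + 1/\alpha$, so that the two contributions of $1/\alpha$ cancel and we are left with $\Psi(\alpha) = \alpha_0^{-1}\bigl(\psi(1) - \psi(\alpha/\alpha_0)\bigr)$, which is \eqref{eq:Psi}.

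For the consequence, recall that the trigamma function $\psi' = \sum_{n\ge 0}(z+n)^{-2}$ is strictly positive on $(0,\infty)$, so $\psi$ is a strictly increasing bijection from $(0,\infty)$ onto $\R$, with $\psi(z)\to -\infty$ as $z\to 0^+$ and $\psi(z)\to +\infty$ as $z\to\infty$. Hence $\alpha \mapsto \Psi(\alpha) = \alpha_0^{-1}\bigl(\psi(1)-\psi(\alpha/\alpha_0)\bigr)$ is a strictly decreasing bijection of $(0,\infty)$ onto $\R$, and $\Psi(\alpha_0) = \alpha_0^{-1}(\psi(1)-\psi(1)) = 0$ pins down its unique zero. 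There is no real obstacle in this lemma; the only point deserving a word of care is the differentiation under the integral sign, which is immediate from the local domination $u^{\alpha/\alpha_0}|\log u|e^{-u} \le (u^{\alpha_-/\alpha_0}+u^{\alpha_+/\alpha_0})|\log u|e^{-u}$ for $\alpha$ in any compact subinterval $[\alpha_-,\alpha_+]\subset(0,\infty)$ (alternatively, one invokes analyticity of $s\mapsto\Gamma(1+s)$ on $(-1,\infty)$). The substance of the argument is bookkeeping with the gamma integral together with one use of the digamma recurrence.
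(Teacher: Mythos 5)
Your proposal is correct and follows essentially the same route as the paper: the paper invokes Lemma~\ref{lem:moments} (whose proof is exactly your substitution $u = x^{-\alpha_0}$, i.e.\ passing to a unit exponential variable) and then applies the digamma recurrence $\psi(z+1)=\psi(z)+1/z$ and the fact that $\psi$ is an increasing bijection of $(0,\infty)$ onto $\R$. The only difference is that you rederive the moment identities inline rather than citing the auxiliary lemma, which changes nothing of substance.
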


\begin{proof}[Proof of Lemma~\ref{lem:Psi}]
By Lemma~\ref{lem:moments},
\begin{align*}
  \Psi(\alpha)
  &= \frac{1}{\alpha} 
  + \frac{(-\alpha_0^{-1}) \Gamma'(1 + \alpha/\alpha_0)}{\Gamma(1 + \alpha/\alpha_0)} 
  - (-\alpha_0^{-1}) \Gamma'(1) 
  = \frac{1}{\alpha_0}
  \left(
    (\alpha/\alpha_0)^{-1} - \psi(1 + \alpha/\alpha_0) + \psi(1)
  \right).
\end{align*}
The digamma function satisfies the recurrence relation $\psi(x+1) = \psi(x) + \frac{1}{x}$. Equation~\eqref{eq:Psi} follows. The final statement follows from the fact that the digamma function $\psi : (0, \infty) \to \R$ is an increasing bijection.
\end{proof}

\begin{proof}[Proof of Theorem~\ref{theo:consistency}]
By Lemma~\ref{lem:existsUnique}, we only have to show the claimed convergence.
Define a random function $\Psi_n$ on $(0, \infty)$ by
\begin{equation}
\label{eq:Psin}
  \Psi_n( \alpha ) 
  = \Psi_{k_n}( \alpha \mid \bm{X}_n ) = \Psi_{k_n}( \alpha \mid \bm{X}_n / \sigma_n ),
\end{equation}
with $\Psi_k(\cdot|\cdot)$ as in \eqref{eq:Psik}. Recall $\Psi$ in \eqref{eq:Psi}. 
The hypotheses imply that, for each $\alpha \in (\alpha_-, \alpha_+)$,
\[
  \Psi_n( \alpha ) \weak \Psi( \alpha ), \qquad n \to \infty.
\]
By Lemma~\ref{lem:Psi}, the limit $\Psi( \alpha )$ is positive, zero, or negative according to whether $\alpha$ is less than, equal to, or greater than $\alpha_0$. Moreover, the function $\Psi_n$ is decreasing and $\Psi_n( \hat{\alpha}_n ) = 0$; see the proof of Lemma~\ref{lem:existsUnique}.

Let $\delta > 0$ be such that $\alpha_- < \alpha_0-\delta < \alpha_0 + \delta < \alpha_+$. Since $\Psi_n( \alpha_0 - \delta ) \weak \Psi( \alpha_0 - \delta ) > 0$ as $n \to \infty$, we find
\begin{align*}
  \Pr[ \hat{\alpha}_n \le \alpha_0 - \delta ]
  \le \Pr[ \Psi_n( \alpha_0 - \delta ) \le 0 ] 
  \to 0, \qquad n \to \infty.
\end{align*}
Similarly, $\Pr[ \hat{\alpha}_n \ge \alpha_0 + \delta ] \to 0$ as $n \to \infty$. We can choose $\delta > 0$ arbitrarily small, thereby concluding that $\hat{\alpha}_n \weak \alpha_0$ as $n \to \infty$.

Second, Condition~\ref{cond:LLN} also implies that, for each $\alpha \in (\alpha_-, \alpha_+)$ and as $n \to \infty$,
\begin{align*}
  \frac{1}{\sigma_n} \left( \frac{1}{k_n} \sum_{i=1}^{k_n} X_{n,i}^{-\alpha} \right)^{-1/\alpha}
  &= \left( \frac{1}{k_n} \sum_{i=1}^{k_n} (X_{n,i}/\sigma_n)^{-\alpha} \right)^{-1/\alpha}  \\
  &\weak \left( \int_0^\infty x^{-\alpha} \, p_{\alpha_0,1}(x) \, \diff x \right)^{-1/\alpha} 
  = \bigl( \Gamma(1 + \alpha/\alpha_0) \bigr)^{-1/\alpha},
\end{align*}
where we used Lemma~\ref{lem:moments} for the last identity. Both the left-hand and right-hand sides are continuous, nonincreasing functions of $\alpha$. Since $\hat{\alpha}_n \weak \alpha_0$ as $n \to \infty$ and since the right-hand side evaluates to unity at $\alpha = \alpha_0$, a standard argument then yields
\[
  \frac{\hat \sigma_n}{\sigma_n}=\frac{1}{\sigma_n} \left( \frac{1}{k_n} \sum_{i=1}^{k_n} X_{n,i}^{-\hat{\alpha}_n} \right)^{-1/\hat{\alpha}_n}
  \weak 1, \qquad n \to \infty. \qedhere
\]
\end{proof}


The proof of Theorem~\ref{theo:asydis} is decomposed into a sequence of lemmas.
Recall $\Psi_n$ and $\Psi$ in \eqref{eq:Psin} and \eqref{eq:Psi}, respectively, and define $\dot{\Psi}_n( \alpha ) = (\diff / \diff \alpha) \Psi_n( \alpha )$ and $\dot{\Psi}(\alpha) = (\diff / \diff \alpha) \Psi(\alpha)$. By \eqref{eq:ddLalpha},
\begin{equation}
\label{eq:dPsin}
  \dot{\Psi}_n(\alpha) =
  - \frac{1}{\alpha^2} - \frac{\Pn [x^{-\alpha} (\log x)^2] \, \Pn x^{-\alpha} - (\Pn x^{-\alpha} \log x)^2 }{(\Pn x^{-\alpha})^2},
\end{equation}
where $\Pn$ denotes the empirical distribution of the points $(X_{n,i}/\sigma_n)_{i=1}^{k_n}$ and where 
\[
 \Pn f = \frac{1}{k_n} \sum_{i=1}^{k_n} f(X_{n,i} / \sigma_n).
\]
The asymptotic distribution of $v_n( \hat \alpha_n - \alpha_0)$ can be derived from the asymptotic behavior of $\dot \Psi_n$ and $v_n \Psi_n$, which is the subject of the next two lemmas, respectively.

\begin{lemma}[Slope]
\label{lem:dPsin}
Let $\bm{X}_n = (X_{n,1}, \ldots, X_{n,k_n})$ be a sequence of random vectors in $(0, \infty)^{k_n}$, where $k_n \to \infty$. Suppose that Equation~\eqref{eq:noties} and Condition~\ref{cond:CLT}(i) are satisfied. If $\tilde{\alpha}_n$ is a random sequence in $(0, \infty)$ such that $\tilde{\alpha}_n \weak \alpha_0$ as $n \to \infty$, then
\[
  \dot{\Psi}_n( \tilde{\alpha}_n ) \weak \dot{\Psi}( \alpha_0 ) = - \frac{\pi^2}{6 \alpha_0^2}, \qquad n \to \infty.
\]
\end{lemma}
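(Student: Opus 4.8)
The plan is to establish the convergence $\dot\Psi_n(\tilde\alpha_n)\weak\dot\Psi(\alpha_0)$ in two stages: first I would compute the limit value $\dot\Psi(\alpha_0)$ from the closed form of $\Psi$, then I would argue that $\dot\Psi_n$ evaluated at a sequence converging to $\alpha_0$ behaves like $\dot\Psi$ evaluated at $\alpha_0$, using the uniform convergence of the relevant empirical averages over a neighbourhood of $\alpha_0$. For the first stage, I would differentiate the explicit expression $\Psi(\alpha)=\alpha_0^{-1}(\psi(1)-\psi(\alpha/\alpha_0))$ from Lemma~\ref{lem:Psi}, obtaining $\dot\Psi(\alpha)=-\alpha_0^{-2}\psi'(\alpha/\alpha_0)$, so that $\dot\Psi(\alpha_0)=-\alpha_0^{-2}\psi'(1)$; since $\psi'(1)=\pi^2/6$ (the trigamma function at $1$), this gives the claimed value $-\pi^2/(6\alpha_0^2)$.

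For the second stage, I would exploit the representation \eqref{eq:dPsin} of $\dot\Psi_n(\alpha)$ as a fixed continuous function of the three empirical quantities $\Pn[x^{-\alpha}(\log x)^2]$, $\Pn[x^{-\alpha}\log x]$, and $\Pn[x^{-\alpha}]$. Condition~\ref{cond:CLT}(i) guarantees that \eqref{eq:LLN} holds for all $f\in\mathcal{F}_2(\alpha_-,\alpha_+)$, which includes $x\mapsto x^{-\alpha}$, $x\mapsto x^{-\alpha}\log x$, and $x\mapsto x^{-\alpha}(\log x)^2$ for every $\alpha\in(\alpha_-,\alpha_+)$. The key technical point is to upgrade these pointwise-in-$\alpha$ weak laws of large numbers to a statement strong enough to handle the random argument $\tilde\alpha_n$. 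The cleanest route is monotonicity: for fixed $n$, each of the maps $\alpha\mapsto\Pn[x^{-\alpha}]$, etc., is monotone in $\alpha$ (or a difference of monotone maps — $x^{-\alpha}\log x$ is decreasing on $(0,e^{1/\alpha})$ etc.), so pointwise convergence of monotone functions to a continuous limit on a dense set forces local uniform convergence. Alternatively one can invoke the Glivenko–Cantelli-type argument already implicit in the paper's function-class setup: the classes $\{x^{-\alpha}:\alpha\in[\alpha_0-\eta,\alpha_0+\eta]\}$ and their $\log$-weighted analogues are uniformly bounded by integrable envelopes near $\alpha_0$ (e.g. $x^{-(\alpha_0-\eta)}\vee x^{-(\alpha_0+\eta)}$ times powers of $|\log x|$), and the convergence \eqref{eq:LLN} for the envelope plus pointwise convergence yields $\sup_{|\alpha-\alpha_0|\le\eta}|\Pn g_\alpha-Pg_\alpha|\weak 0$ by a standard bracketing argument.

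Given this local uniform convergence, the conclusion follows: on the event $\{|\tilde\alpha_n-\alpha_0|\le\eta\}$, which has probability tending to one, we have $\Pn[x^{-\tilde\alpha_n}(\log x)^j]\weak\int_0^\infty x^{-\alpha_0}(\log x)^j\,p_{\alpha_0,1}(x)\,dx$ for $j=0,1,2$, using that the limit function $\alpha\mapsto\int x^{-\alpha}(\log x)^j p_{\alpha_0,1}\,dx=(-1)^j\alpha_0^{-j}\Gamma^{(j)}(1+\alpha/\alpha_0)$ is continuous and $\tilde\alpha_n\weak\alpha_0$ (an application of the continuous-mapping theorem combined with Slutsky). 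Plugging these limits into \eqref{eq:dPsin} via the continuous-mapping theorem gives $\dot\Psi_n(\tilde\alpha_n)\weak -\alpha_0^{-2}-\{\Gamma''(1)\Gamma(1)-\Gamma'(1)^2\}$ after normalising ($\Gamma(1)=1$), and a short computation using $\Gamma'(1)=-\gamma$, $\Gamma''(1)=\gamma^2+\pi^2/6$ confirms this equals $-\alpha_0^{-2}(1+\pi^2/6-1)=-\pi^2/(6\alpha_0^2)=\dot\Psi(\alpha_0)$, matching the first-stage computation. The main obstacle is the uniform-convergence step of stage two — making the pointwise weak laws uniform over a neighbourhood of $\alpha_0$ so that substitution of $\tilde\alpha_n$ is legitimate — but the monotonicity of the integrand families in $\alpha$ reduces this to a routine lemma, so I expect no serious difficulty.
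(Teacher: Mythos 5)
Your overall architecture --- compute $\dot\Psi(\alpha_0)=-\alpha_0^{-2}\psi'(1)=-\pi^2/(6\alpha_0^2)$, upgrade the pointwise laws of large numbers to a locally uniform statement in $\alpha$ near $\alpha_0$, then substitute $\tilde\alpha_n$ using continuity of the map in \eqref{eq:dPsin} --- is the same as the paper's. The genuine gap is in the mechanism you call the ``cleanest route'': it is not true that each of the maps $\alpha\mapsto\Pn [x^{-\alpha}]$ and $\alpha\mapsto\Pn[x^{-\alpha}(\log x)^2]$ is monotone in $\alpha$. Their $\alpha$-derivatives are $-\Pn[x^{-\alpha}\log x]$ and $-\Pn[x^{-\alpha}(\log x)^3]$, whose integrands change sign at $x=1$; only the middle family $\alpha\mapsto\Pn[x^{-\alpha}\log x]$ is genuinely nonincreasing (derivative $-\Pn[x^{-\alpha}(\log x)^2]\le 0$). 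Hence the P\'olya-type argument (pointwise convergence of monotone functions to a continuous limit implies locally uniform convergence) does not apply as stated, and splitting into the monotone pieces $\Pn[x^{-\alpha}\ind(x\le 1)]$ and $\Pn[x^{-\alpha}\ind(x>1)]$ does not rescue it, because Condition~\ref{cond:CLT}(i) provides no law of large numbers for these truncated functions --- they are not in $\mathcal{F}_2(\alpha_-,\alpha_+)$. The paper repairs exactly this point with Lyapounov's inequality: $\alpha\mapsto(\Pn x^{-\alpha})^{1/\alpha}$ is the $L^\alpha(\Pn)$-norm of $x\mapsto x^{-1}$ and hence monotone, and for the $(\log x)^2$-weighted family one first normalizes by $\Pn(\log x)^2$ (positive with probability tending to one, using \eqref{eq:noties}) so that $\alpha\mapsto\bigl(\Pn[x^{-\alpha}(\log x)^2]/\Pn[(\log x)^2]\bigr)^{1/\alpha}$ is again a monotone family; monotonicity plus continuity of the limit then yields \eqref{eq:uniformly}.

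Your fallback route (finite grid plus a Lipschitz-in-$\alpha$ envelope) can be made rigorous, but not by invoking a ``standard bracketing / Glivenko--Cantelli argument'': the $X_{n,i}$ form a triangular array with no i.i.d.\ or mixing structure assumed at this level, so the only input available is \eqref{eq:LLN} for members of $\mathcal{F}_2(\alpha_-,\alpha_+)$. You must therefore dominate the envelopes $\sup_{|\alpha-\alpha_0|\le\eta}x^{-\alpha}|\log x|^{m+1}$ by finite linear combinations of functions in that class --- e.g.\ $\sup_{|\alpha-\alpha_0|\le\eta}x^{-\alpha}\le x^{-(\alpha_0-\eta)}+x^{-(\alpha_0+\eta)}$, $|\log x|\le 1+(\log x)^2$ and $|\log x|^3\le C\,(x^{\eta'}+x^{-\eta'})(\log x)^2$ --- so that their empirical means are $O_p(1)$ by \eqref{eq:LLN}; a finite grid plus the pointwise laws of large numbers then gives the uniform statement. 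Finally, a small slip in your last computation: the limits of $\Pn[x^{-\alpha_0}(\log x)^j]$ are $(-1)^j\alpha_0^{-j}\Gamma^{(j)}(2)$ (argument $2=1+\alpha_0/\alpha_0$, not $1$), and the second term in the limit of \eqref{eq:dPsin} carries the factor $\alpha_0^{-2}$; the correct evaluation is $-\alpha_0^{-2}\bigl\{1+\Gamma''(2)-(\Gamma'(2))^2\bigr\}=-\pi^2/(6\alpha_0^2)$, consistent with your (correct) trigamma calculation of $\dot\Psi(\alpha_0)$.
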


\begin{proof}
For $\alpha \in (0, \infty)$ and $m \in \{0, 1, 2\}$, define
\[
  f_{m,\alpha}(x) = x^{-\alpha} (\log x)^m, \qquad x \in (0, \infty),
\]
with $(\log x)^0 = 1$ for all $x \in (0, \infty)$. Suppose that we could show that, for $m \in \{0, 1, 2\}$ and some $\eps > 0$,
\begin{equation}
\label{eq:uniformly}
  \sup_{\alpha : \abs{\alpha - \alpha_0} \le \eps} \abs{ \Pn f_{m,\alpha} - \int_0^\infty f_{m,\alpha}(x) \, p_{\alpha_0}(x) \, \diff x }
  \weak 0, \qquad n \to \infty.
\end{equation}
Then from weak convergence of $\tilde{\alpha}_n$ to $\alpha_0$, Slutsky's lemma (\citealp{Van98}, Lemma~2.8) and Lemma~\ref{lem:moments} below, it would follow that
\[
  \dot{\Psi}_n( \tilde{\alpha}_n )
  \weak - \frac{1}{\alpha_0^2} - \frac{\alpha_0^{-2} \Gamma''(2) \, \Gamma(2) - (\alpha_0^{-1} \Gamma'(2))^2}{(\Gamma(2))^2}, \qquad n \to \infty.
\]
Since $\Gamma(2) = 1$, $\Gamma'(2) = 1-\gamma$ and $\Gamma''(2) = (1-\gamma)^2 + \pi^2/6 - 1$, the conclusion would follow.

It remains to show \eqref{eq:uniformly}. We consider the three cases $m \in \{0, 1, 2\}$ separately. 
Let $\eps > 0$ be small enough such that $\alpha_- < \alpha_0-\eps < \alpha_0+\eps < \alpha_+$.

First, let $m = 0$. The maps $\alpha \mapsto (\Pn f_{0, \alpha})^{1/\alpha}$ and $\alpha \mapsto (\int_0^\infty f_{0,\alpha} \, p_{\alpha_0, 1})^{1/\alpha}$ are monotone by Lyapounov's inequality [i.e., $\|f\|_r \le \|f\|_s$ for $0<r<s$, where $\|f\|_r=(\int_{\Xc} |f|^r\, \diff \mu)^{1/r}$ denotes the $L_r$-norm of some real-valued function $f$ on a measurable space $(\Xc, \mu)$], and the second one is also continuous by Lemma~\ref{lem:moments}.
Pointwise convergence of monotone functions to a monotone, continuous limit implies locally uniform convergence \citep[Section~0.1]{Res87}. This property easily extends to weak convergence, provided the limit is nonrandom. We obtain
\[
  \sup_{\alpha : \abs{\alpha - \alpha_0} \le \eps} 
  \abs{ (\Pn f_{0,\alpha})^{1/\alpha} - \left(\int_0^\infty f_{0,\alpha}(x) \, p_{\alpha_0}(x) \, \diff x\right)^{1/\alpha} }
  \weak 0, \qquad n \to \infty.
\]
Uniform continuity of the map $(y, \alpha) \mapsto y^\alpha$ on compact subsets of $(0, \infty)^2$ then yields \eqref{eq:uniformly} for $m = 0$.

Second, let $m = 1$. The maps $\alpha \mapsto \Pn f_{1, \alpha}$ and $\alpha \mapsto \int_0^\infty f_{1, \alpha} \, p_{\alpha_0,1}$ are continuous and nonincreasing (their derivatives are nonpositive). Pointwise weak convergence at each $\alpha \in (\alpha_-, \alpha_+)$ then yields \eqref{eq:uniformly} for $m = 1$.

Finally, let $m = 2$. With probability tending to one, not all variables $X_{n,i}$ are equal to $\sigma_n$, and thus $\Pn (\log x)^2 > 0$. On the latter event, we have
\[
  \Pn x^{-\alpha} (\log x)^2
  = \Pn (\log x)^2 \, \left\{ \left( \frac{\Pn x^{-\alpha} (\log x)^2}{\Pn (\log x)^2} \right)^{1/\alpha} \right\}^\alpha.
\]
By Lyapounov's inequality, the expression in curly braces is nondecreasing in $\alpha$. For each $\alpha \in (\alpha_-, \alpha_+)$, it converges weakly to $\{\Gamma''(1 + \alpha/\alpha_0) / \Gamma''(1)\}^{1/\alpha}$, which is nondecreasing and continuous in $\alpha$; see Lemma~\ref{lem:moments}. It follows that
\[
  \sup_{\alpha : \abs{\alpha - \alpha_0} \le \eps} 
  \abs{ \left( \frac{\Pn x^{-\alpha} (\log x)^2}{\Pn (\log x)^2} \right)^{1/\alpha} - \left( \frac{\Gamma''(1 + \alpha/\alpha_0)}{\Gamma''(1)} \right)^{1/\alpha} }
  \weak 0, \qquad n \to \infty.  
\]
Equation~\eqref{eq:uniformly} for $m = 2$ follows.
\end{proof}

\begin{lemma}
\label{lem:Psinweak2}
Assume Condition~\ref{cond:CLT}. Then, as $n \to \infty$,
\begin{align}
\label{eq:Psinweak2}
  v_n \, \Psi_n( \alpha_0 ) 
  = \Gn x^{-\alpha_0} \log(x) + \frac{1-\gamma}{\alpha_0} \, \Gn x^{-\alpha_0} - \Gn \log(x) + o_p(1).
\end{align}
The expression on the right converges weakly to $Y \equiv Y_1 + \frac{1-\gamma}{\alpha_0} Y_2 - Y_3$.
\end{lemma}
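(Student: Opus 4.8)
The plan is to start from the exact algebraic identity for $\Psi_n(\alpha_0)$ coming from the definition \eqref{eq:Psik}, namely
\[
  \Psi_n(\alpha_0)
  = \frac{1}{\alpha_0} + \frac{\Pn[x^{-\alpha_0}\log x]}{\Pn x^{-\alpha_0}} - \Pn \log x ,
\]
and to compare it with the deterministic limit $\Psi(\alpha_0) = 0$ from Lemma~\ref{lem:Psi}. Multiplying the difference by $v_n$ and writing $\Pn f = P f + v_n^{-1}\Gn f$ for $f \in \Hc$ (with $Pf = \int_0^\infty f(x)\,p_{\alpha_0,1}(x)\,\diff x$), the only nonlinearity is the ratio $\Pn[x^{-\alpha_0}\log x]/\Pn x^{-\alpha_0}$. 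First I would record the three relevant limiting integrals via Lemma~\ref{lem:moments}: $P x^{-\alpha_0} = \Gamma(2) = 1$, $P[x^{-\alpha_0}\log x] = -\alpha_0^{-1}\Gamma'(2) = -(1-\gamma)/\alpha_0$, and $P\log x = -\Gamma'(1)\cdot(\text{something})$ — more precisely $P\log x = \gamma/\alpha_0$ — so that indeed $\Psi(\alpha_0) = 1/\alpha_0 + (-(1-\gamma)/\alpha_0)/1 - \gamma/\alpha_0 = 0$, consistent with Lemma~\ref{lem:Psi}.

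The key step is a first-order (delta-method) expansion of the ratio. Write $A_n = \Pn[x^{-\alpha_0}\log x]$ and $B_n = \Pn x^{-\alpha_0}$, with deterministic limits $a = -(1-\gamma)/\alpha_0$ and $b = 1$. Then
\[
  \frac{A_n}{B_n} - \frac{a}{b}
  = \frac{A_n - a}{b} - \frac{a}{b^2}(B_n - b) + R_n,
\]
where $R_n$ is the usual quadratic remainder. Since $v_n(A_n - a) = \Gn[x^{-\alpha_0}\log x] = O_p(1)$ and $v_n(B_n - b) = \Gn x^{-\alpha_0} = O_p(1)$ by Condition~\ref{cond:CLT}(ii), while $B_n \pto b$ by Condition~\ref{cond:CLT}(i) (Condition~\ref{cond:LLN} applied to $x\mapsto x^{-\alpha_0}$), we get $v_n R_n = o_p(1)$ — concretely, $R_n$ is a product of two terms each $O_p(v_n^{-1})$ divided by quantities bounded away from $0$ in probability, so $v_n R_n = O_p(v_n^{-1}) = o_p(1)$. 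Substituting $b = 1$, $a = -(1-\gamma)/\alpha_0$ and collecting terms,
\[
  v_n\Big(\frac{A_n}{B_n} - \frac{a}{b}\Big)
  = \Gn[x^{-\alpha_0}\log x] + \frac{1-\gamma}{\alpha_0}\,\Gn x^{-\alpha_0} + o_p(1).
\]
Adding the (exactly linear) contribution $-v_n(\Pn\log x - P\log x) = -\Gn\log x$ from the last term of $\Psi_n$, and noting that the constants $1/\alpha_0$ cancel since $\Psi(\alpha_0) = 0$, yields \eqref{eq:Psinweak2}.

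For the final assertion, the right-hand side of \eqref{eq:Psinweak2} is a fixed linear functional $L(g_1,g_2,g_3) = g_1 + \frac{1-\gamma}{\alpha_0} g_2 - g_3$ of the vector $(\Gn f_1, \Gn f_2, \Gn f_3)^T$ from \eqref{eq:H}, plus $o_p(1)$; by Condition~\ref{cond:CLT}(ii) this vector converges weakly to $\bm Y$, so the continuous mapping theorem together with Slutsky's lemma gives $v_n\Psi_n(\alpha_0) \weak Y_1 + \frac{1-\gamma}{\alpha_0}Y_2 - Y_3 =: Y$. The main (very mild) obstacle is bookkeeping: making sure the remainder in the ratio expansion is genuinely $o_p(v_n^{-1})$, which requires only that $B_n$ is bounded away from zero in probability — guaranteed because $P x^{-\alpha_0} = 1 > 0$ and Condition~\ref{cond:LLN} forces $B_n \pto 1$ — and checking that the constant terms cancel, which is exactly the statement $\Psi(\alpha_0) = 0$ from Lemma~\ref{lem:Psi}. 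Everything else is a direct substitution of the gamma-function values from Lemma~\ref{lem:moments}.
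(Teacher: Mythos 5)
Your proof is correct and follows essentially the same route as the paper: the paper also linearizes the ratio $\Pn[x^{-\alpha_0}\log x]/\Pn x^{-\alpha_0}$ around the limiting values $(-(1-\gamma)/\alpha_0,\,1)$ (phrased there as the delta method applied to $\phi(y_1,y_2,y_3)=\alpha_0^{-1}+y_1/y_2-y_3$ at $\bm y_0$), obtains the same coefficients $1$, $(1-\gamma)/\alpha_0$, $-1$, and concludes by Condition~\ref{cond:CLT}(ii) and Slutsky. Your explicit verification that the quadratic remainder is $o_p(v_n^{-1})$, using $\Pn x^{-\alpha_0}\pto 1$, is exactly the bookkeeping the paper's delta-method appeal implicitly relies on.
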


\begin{proof} Recall that
\begin{equation*}
  \Psi_n( \alpha_0 ) 
  = \Psi_{k_n}( \alpha_0 \mid \bm{X}_n / \sigma_n )
  = \frac{1}{\alpha_0} + \frac{\Pn x^{-\alpha_0} \log(x)}{\Pn x^{-\alpha_0}} - \Pn \log(x).
\end{equation*}
Define $\phi : \R \times (0, \infty) \times \R \to \R$ by
\[
   \phi(y_1, y_2, y_3) = \frac{1}{\alpha_0} + \frac{y_1}{y_2} - y_3.
\]
The previous two displays allow us to write
\[
  \Psi_n( \alpha_0 ) = \phi \bigl( \Pn x^{-\alpha_0} \log(x), \, \Pn x^{-\alpha_0}, \, \Pn \log(x) \bigr).
\]
Recall Lemma~\ref{lem:moments} and put
\begin{align*}
  \bm{y}_0 
  &= \bigl( - \alpha_0^{-1} \Gamma'(2), \Gamma(2), - \alpha_0^{-1} \Gamma'(1) \bigr) 
  = \bigl( \alpha_0^{-1} (\gamma - 1), \, 1, \, \alpha_0^{-1} \gamma \bigr).
\end{align*}
As already noted in the proof of Lemma~\ref{lem:Psi}, we have
$
  \phi ( \bm{y}_0 )
  = \alpha_0^{-1} + \alpha_0^{-1} (\gamma-1) - \alpha_0^{-1} \gamma
  = 0.
$
As a consequence,
\[
  v_n \, \Psi_n( \alpha_0 )
  = v_n \bigl\{ 
    \phi \bigl( \Pn x^{-\alpha_0} \log(x), \, \Pn x^{-\alpha_0}, \, \Pn \log(x) \bigr) 
    -
    \phi ( \bm{y}_0 )
  \bigr\}.
\]
In view of Condition~\ref{cond:CLT} and the delta method, as $n \to \infty$,
\[
  v_n \, \Psi_n( \alpha_0 )
  = \dot{\phi}_1( \bm{y}_0 ) \, \Gn x^{-\alpha_0} \log(x)
  + \dot{\phi}_2( \bm{y}_0 ) \, \Gn x^{-\alpha_0}
  + \dot{\phi}_3( \bm{y}_0 ) \, \Gn \log(x)
  + o_p(1),
\]
where $\dot{\phi}_j$ denotes the first-order partial derivative of $\phi$ with respect to $y_j$ for $j \in \{1, 2, 3\}$. Elementary calculations yield
\begin{align*}
  \dot{\phi}_1( \bm{y}_0 ) &= 1, & 
  \dot{\phi}_2( \bm{y}_0 ) &= \alpha_0^{-1}(1 - \gamma), &
  \dot{\phi}_3( \bm{y}_0 ) &= -1.
\end{align*}
The conclusion follows by Slutsky's lemma.
\end{proof}

\begin{prop}[Asymptotic expansion for the shape parameter]
\label{prop:shape}
Assume that the conditions of Theorem~\ref{theo:asydis} hold. 
Then, with $Y$ as defined in Lemma~\ref{lem:Psinweak2},
\begin{equation}
\label{eq:shape:delta}
  v_n \, \bigl( \hat{\alpha}_n - \alpha_0 \bigr)
  = \frac{6 \alpha_0^2}{\pi^2} \, v_n \, \Psi_n( \alpha_0 ) + o_p(1)
  \weak \frac{6 \alpha_0^2}{\pi^2} \, Y, \qquad n \to \infty.
\end{equation}
\end{prop}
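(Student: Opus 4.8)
The plan is to combine the three ingredients already assembled: the estimating equation $\Psi_n(\hat\alpha_n)=0$, the slope convergence in Lemma~\ref{lem:dPsin}, and the asymptotic linearization of $v_n\Psi_n(\alpha_0)$ in Lemma~\ref{lem:Psinweak2}. First I would invoke Theorem~\ref{theo:consistency}: Condition~\ref{cond:CLT}(i) implies Condition~\ref{cond:LLN}, so together with \eqref{eq:noties} we get $\hat\alpha_n \weak \alpha_0$, and in particular $\hat\alpha_n \in (\alpha_-,\alpha_+)$ with probability tending to one. On that event the function $\alpha\mapsto\Psi_n(\alpha)$ is differentiable (formula \eqref{eq:dPsin}), and a first-order Taylor expansion about $\alpha_0$ gives
\[
  0 = \Psi_n(\hat\alpha_n) = \Psi_n(\alpha_0) + \dot\Psi_n(\tilde\alpha_n)\,(\hat\alpha_n - \alpha_0)
\]
for some (random) $\tilde\alpha_n$ between $\hat\alpha_n$ and $\alpha_0$. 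Since $\hat\alpha_n\weak\alpha_0$, also $\tilde\alpha_n\weak\alpha_0$, so Lemma~\ref{lem:dPsin} yields $\dot\Psi_n(\tilde\alpha_n)\weak -\pi^2/(6\alpha_0^2)$, a nonzero constant.

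Next I would solve the Taylor identity for $\hat\alpha_n-\alpha_0$:
\[
  v_n(\hat\alpha_n - \alpha_0) = -\frac{v_n\Psi_n(\alpha_0)}{\dot\Psi_n(\tilde\alpha_n)}.
\]
By Lemma~\ref{lem:Psinweak2}, $v_n\Psi_n(\alpha_0) = \Gn x^{-\alpha_0}\log(x) + \tfrac{1-\gamma}{\alpha_0}\Gn x^{-\alpha_0} - \Gn\log(x) + o_p(1)$, which converges weakly to $Y$ and is therefore $O_p(1)$. Combining this with $\dot\Psi_n(\tilde\alpha_n)\weak -\pi^2/(6\alpha_0^2)$ via the continuous mapping theorem and Slutsky's lemma (the denominator converges to a nonzero constant, so $1/\dot\Psi_n(\tilde\alpha_n)\weak -6\alpha_0^2/\pi^2$), I get
\[
  v_n(\hat\alpha_n - \alpha_0) = \frac{6\alpha_0^2}{\pi^2}\,v_n\Psi_n(\alpha_0) + o_p(1) \weak \frac{6\alpha_0^2}{\pi^2}\,Y.
\]
One has to be slightly careful writing $o_p(1)$ for the difference: since $v_n\Psi_n(\alpha_0)=O_p(1)$ and $1/\dot\Psi_n(\tilde\alpha_n) + 6\alpha_0^2/\pi^2 = o_p(1)$, the product is $o_p(1)$, which gives the displayed expansion \eqref{eq:shape:delta}.

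The main obstacle, and it is already handled by the preparatory lemmas, is the slope convergence $\dot\Psi_n(\tilde\alpha_n)\weak\dot\Psi(\alpha_0)$ along a random sequence $\tilde\alpha_n$: this is not mere pointwise convergence of $\dot\Psi_n(\alpha)$ but requires locally uniform control over $\alpha$ near $\alpha_0$ of the empirical averages $\Pn x^{-\alpha}(\log x)^m$ for $m\in\{0,1,2\}$, which is exactly why Condition~\ref{cond:CLT}(i) is stated over the larger class $\mathcal{F}_2(\alpha_-,\alpha_+)$ including $x^{-\alpha}(\log x)^2$, and why Lemma~\ref{lem:dPsin} exploits monotonicity (via Lyapounov's inequality) to upgrade pointwise weak convergence to uniform weak convergence. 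Given that lemma, the remaining argument is the routine Taylor-expansion-plus-Slutsky scheme sketched above; no further delicate estimates are needed. The only bookkeeping point is to ensure all statements are made "on the event that the $X_{n,i}$ are not all tied and $\hat\alpha_n\in(\alpha_-,\alpha_+)$", whose probability tends to one by \eqref{eq:noties} and consistency, so that the $o_p(1)$ and weak-convergence conclusions are unaffected.
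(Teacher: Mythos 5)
Your proposal is correct and follows essentially the same route as the paper: the mean-value expansion of $\Psi_n$ around $\alpha_0$ at its zero $\hat\alpha_n$, slope convergence along the random intermediate point via Lemma~\ref{lem:dPsin}, and Lemma~\ref{lem:Psinweak2} plus Slutsky to conclude. The only cosmetic difference is that the paper justifies the division by $\dot\Psi_n(\tilde\alpha_n)$ through the deterministic bound $\dot\Psi_n(\alpha)\le -1/\alpha^2<0$, whereas you rely on the limit being a nonzero constant, which is equally sufficient on an event of probability tending to one.
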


\begin{proof}
Recall that, with probability tending to one, $\hat{\alpha}_n$ is the unique zero of the random function $\alpha \mapsto \Psi_n(\alpha)$. Recall that $\dot{\Psi}_n$ in \eqref{eq:dPsin} is the derivative of $\Psi_n$. With probability tending to one, we have, by virtue of the mean-value theorem,
\begin{align*}
  0
  = \Psi_n( \hat{\alpha}_n ) 
  &= \bigl( \Psi_n( \hat{\alpha}_n ) - \Psi_n( \alpha_0 ) \bigr) + \Psi_n( \alpha_0 ) 
  = ( \hat{\alpha}_n - \alpha_0 ) \, \dot{\Psi}_n( \tilde{\alpha}_n ) + \Psi_n( \alpha_0 );
\end{align*}
here $\tilde{\alpha}_n$ is a convex combination of $\hat{\alpha}_n$ and $\alpha_0$. Since $\dot{\Psi}_n( \alpha ) \le -1/\alpha^2 < 0$ (argument as in the proof of Lemma~\ref{lem:existsUnique}), we can write
\[
  v_n \, \bigl( \hat{\alpha}_n - \alpha_0 \bigr) = - \frac{1}{\dot{\Psi}_n( \tilde{\alpha}_n )} \, v_n \, \Psi_n( \alpha_0 ).
\]
By weak consistency of $\hat{\alpha}_n$, we have $\tilde{\alpha}_n \weak \alpha_0$ as $n \to \infty$. Lemma~\ref{lem:dPsin} then gives $\dot{\Psi}_n( \tilde{\alpha}_n ) \weak - \pi^2 / (6 \alpha_0^2)$ as $n \to \infty$. Apply Lemma~\ref{lem:Psinweak2} and Slutsky's lemma to conclude.
\end{proof}

\begin{proof}[Proof of Theorem~\ref{theo:asydis} and Addendum~\ref{add:asydis}]
%
%
Combining Equations~\eqref{eq:shape:delta} and \eqref{eq:Psinweak2} yields
\[
  v_n \, ( \hat{\alpha}_n - \alpha_0 )
  = 
  \frac{6 \alpha_0^2}{\pi^2} 
  \left(
    \Gn x^{-\alpha_0} \log(x) + \frac{1-\gamma}{\alpha_0} \, \Gn x^{-\alpha_0} - \Gn \log(x)
  \right)
  + o_p(1)
\]
as $n \to \infty$. This yields the first row in \eqref{eq:joint:delta}.

By definition of $\hat{\sigma}_n$, we have
$
  ( \hat{\sigma}_n / \sigma_n )^{-\hat{\alpha}_n}
  = \Pn x^{-\hat{\alpha}_n}.
$
Consider the decomposition
\begin{equation}
\label{eq:scale:decomp}
  v_n \, \bigl( ( \hat{\sigma}_n / \sigma_n )^{-\hat{\alpha}_n} - 1 \bigr)
  = v_n \, \bigl( \Pn x^{-\hat{\alpha}_n} - \Pn x^{-\alpha_0} \bigr)
  + v_n \, \bigl( \Pn x^{-\alpha_0} - 1 \bigr).
\end{equation}
By the mean value theorem, there exists a convex combination, $\tilde{\alpha}_n$, of $\hat{\alpha}_n$ and $\alpha_0$ such that
\[
  \Pn x^{-\hat{\alpha}_n} - \Pn x^{-\alpha_0}
  = - (\hat{\alpha}_n - \alpha_0) \, \Pn x^{-\tilde{\alpha}_n} \log(x).
\]
By the argument for the case $m = 1$ in the proof of Lemma~\ref{lem:dPsin}, we have
\[
  \Pn x^{-\tilde{\alpha}_n} \log(x) \weak - \frac{1}{\alpha_0} \Gamma'(2) = - \frac{1-\gamma}{\alpha_0}, \qquad n \to \infty.
\]
By Proposition~\ref{prop:shape} and Lemma~\ref{lem:Psinweak2}, it follows that, as $n \to \infty$,
\begin{align*}
  v_n \, \bigl( \Pn x^{-\hat{\alpha}_n} - \Pn x^{-\alpha_0} \bigr)
  &= v_n \, ( \hat{\alpha}_n - \alpha_0 ) \, \frac{1-\gamma}{\alpha_0} + o_p(1) \\
  &= \frac{6 \alpha_0 \, (1-\gamma)}{\pi^2} \, v_n \, \Psi_n( \alpha_0 ) + o_p(1) \\
  &= \frac{6 \alpha_0 \, (1-\gamma)}{\pi^2} \, 
  \left( \Gn x^{-\alpha_0} \log(x) + \frac{1-\gamma}{\alpha_0} \, \Gn x^{-\alpha_0} - \Gn \log(x) \right) + o_p(1).
\end{align*}
This expression in combination with \eqref{eq:scale:decomp} yields, as $n \to \infty$,
\begin{multline}
\label{eq:scale:aux}
  v_n \, \bigl( ( \hat{\sigma}_n / \sigma_n )^{-\hat{\alpha}_n} - 1 \bigr) \\
  = \frac{6 \alpha_0 \, (1-\gamma)}{\pi^2} \, 
  \left( \Gn x^{-\alpha_0} \log(x) + \frac{1-\gamma}{\alpha_0} \, \Gn x^{-\alpha_0} - \Gn \log(x) \right)
  + \Gn x^{-\alpha_0} + o_p(1).
\end{multline}
Write $Z_n = ( \hat{\sigma}_n / \sigma_n )^{-\hat{\alpha}_n}$, which converges weakly to 1 as $n \to \infty$. By the mean value theorem, \begin{align*}
  v_n \, ( \hat{\sigma}_n / \sigma_n - 1 )
  &= v_n \, (Z_n^{-1/\hat{\alpha}_n} - 1) 
  = v_n \, (Z_n - 1) \, (-1/\hat{\alpha}_n) \, \tilde{Z}_n^{-1/\hat{\alpha}_n - 1},
\end{align*}
where $\tilde{Z}_n$ is a random convex combination of $Z_n$ and $1$. But then $\tilde{Z}_n \weak 1$ as $n \to \infty$, whence, by consistency of $\hat{\alpha}_n$ and Slutsky's lemma,
\[
  v_n \, ( \hat{\sigma}_n / \sigma_n - 1 )
  = (-1/\alpha_0) \, v_n \, ( ( \hat{\sigma}_n / \sigma_n )^{-\hat{\alpha}_n} - 1 ) + o_p(1), \qquad n \to \infty.
\]
Combinining this with \eqref{eq:scale:aux}, we find
\begin{multline*}
  v_n \, ( \hat{\sigma}_n / \sigma_n - 1 )
  = - \frac{6 (1-\gamma)}{\pi^2} \, 
  \left( \Gn x^{-\alpha_0} \log(x) + \frac{1-\gamma}{\alpha_0} \, \Gn x^{-\alpha_0} - \Gn \log(x) \right) \\
  - \alpha_0^{-1} \Gn x^{-\alpha_0} + o_p(1)
\end{multline*}
as $n \to \infty$. This is the second row in \eqref{eq:joint:delta}. 

The proof of Addendum~\ref{add:asydis} follows from a tedious but straightforward calculation.
\end{proof}

\subsection{Proofs for Section~\ref{sec:stat}}
\label{subsec:proofs:stat}




\begin{lemma}[Block maxima rarely show ties]
\label{lem:ties}
Under Conditions~\ref{cond:DA} and \ref{cond:alpha}, for every $c \in (0, \infty)$, we have $\Pr[ M_{r_n,1} \vee c = M_{r_n,3} \vee c ] \to 0$ as $n \to \infty$.
\end{lemma}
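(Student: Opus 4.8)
The plan is to exploit two features. Blocks $1$ and $3$ are separated in time by the full block $2$ of length $r_n$, so the $\alpha$-mixing assumption makes them asymptotically independent; and the law of the truncated block maximum $M_{r_n}\vee c$, rescaled by $\sigma_{r_n}$, converges to the continuous Fr\'echet law, so its atoms become uniformly small. Matching a near-equiprobable partition of the line against the mixing inequality will then force the coincidence probability to vanish.

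First I would collect the analytic ingredients. Since $\sigma_n\to\infty$ and $r_n\to\infty$, also $\sigma_{r_n}\to\infty$. By Condition~\ref{cond:DA}, $M_{r_n}/\sigma_{r_n}\weak\Frechet(\alpha_0,1)$, and since the limiting distribution function $G_{(\alpha_0,1)}$ is continuous on $\R$, Pólya's theorem upgrades this to uniform convergence of distribution functions: $\|F_n^{*}-G_{(\alpha_0,1)}\|_\infty\to 0$, where $F_n^{*}$ denotes the distribution function of $M_{r_n}/\sigma_{r_n}$. Two consequences follow. On one side, $\Pr[M_{r_n}\le c]=F_n^{*}(c/\sigma_{r_n})\to 0$, because $c/\sigma_{r_n}\downarrow 0$ and $G_{(\alpha_0,1)}$ is continuous and vanishes at $0$. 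On the other side, for every $h>0$ and every $z\in\R$,
\[
  \Pr[M_{r_n}/\sigma_{r_n}=z]
  \le F_n^{*}(z)-F_n^{*}(z-h)
  \le 2\,\|F_n^{*}-G_{(\alpha_0,1)}\|_\infty + \sup_{z\in\R}\bigl(G_{(\alpha_0,1)}(z)-G_{(\alpha_0,1)}(z-h)\bigr),
\]
so letting $n\to\infty$ and then $h\downarrow 0$ (using uniform continuity of $G_{(\alpha_0,1)}$) gives $\sup_w\Pr[M_{r_n}=w]\to 0$. Writing $W_n^{(i)}=M_{r_n,i}\vee c$ and $\mu_n$ for their common law (equal by strict stationarity), the largest atom $\rho_n:=\sup_w\mu_n(\{w\})$ therefore satisfies $\rho_n\le\Pr[M_{r_n}\le c]+\sup_w\Pr[M_{r_n}=w]\to 0$.

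Next comes the blocking step. Fix $n$ and an integer $N=N_n\ge 1$, and use $\mu_n$-quantiles to split $\R$ into $N$ consecutive half-open intervals $A_1,\dots,A_N$ with $\max_{1\le j\le N}\mu_n(A_j)\le 1/N+\rho_n$. Since a coincidence $W_n^{(1)}=W_n^{(3)}$ forces both values to lie in the same cell, $\{W_n^{(1)}=W_n^{(3)}\}\subseteq\bigcup_{j=1}^N\bigl(\{W_n^{(1)}\in A_j\}\cap\{W_n^{(3)}\in A_j\}\bigr)$. Now $\{W_n^{(1)}\in A_j\}\in\sigma(\xi_1,\dots,\xi_{r_n})$ and $\{W_n^{(3)}\in A_j\}\in\sigma(\xi_{2r_n+1},\dots,\xi_{3r_n})$; after a time shift these are contained in $\sigma(\xi_t:t\le 0)$ and $\sigma(\xi_t:t\ge r_n+1)$ respectively, so strict stationarity, the definition of $\alpha(\cdot)$ and monotonicity of $\ell\mapsto\alpha(\ell)$ give $\Pr[W_n^{(1)}\in A_j,\,W_n^{(3)}\in A_j]\le\mu_n(A_j)^2+\alpha(r_n)$. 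Summing over $j$,
\[
  \Pr\bigl[\,M_{r_n,1}\vee c=M_{r_n,3}\vee c\,\bigr]
  \le\Bigl(\max_{1\le j\le N}\mu_n(A_j)\Bigr)\sum_{j=1}^N\mu_n(A_j)+N\,\alpha(r_n)
  \le\frac1N+\rho_n+N\,\alpha(r_n).
\]
Because $\alpha(\ell)\to 0$ and $r_n\to\infty$, we have $\alpha(r_n)\to 0$, so we may pick $N_n\to\infty$ slowly enough that $N_n\,\alpha(r_n)\to 0$ (for instance $N_n=\lceil(\alpha(r_n)\vee n^{-1})^{-1/2}\rceil$). With this choice all three terms on the right tend to $0$, which is the assertion.

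I expect the only genuinely delicate point to be the uniform control of the atoms of $\mu_n$ carried out in the second paragraph; everything else is the classical ``equiprobable blocking plus mixing'' device. Note that this argument uses only $\alpha(r_n)\to 0$, not the stronger rate $k_n^{1+\omega}\alpha(r_n)\to 0$ of Condition~\ref{cond:alpha}, and that working with blocks $1$ and $3$ (rather than $1$ and $2$) is essential, since the proof needs a time gap of order $r_n$ between the two blocks.
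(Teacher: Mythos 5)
Your argument is correct, but it follows a genuinely different route from the paper. The paper's proof is a two-line appeal to asymptotic independence: by the domain-of-attraction condition combined with strong mixing, the pair $\bigl((M_{r_n,1}\vee c)/\sigma_{r_n},\,(M_{r_n,3}\vee c)/\sigma_{r_n}\bigr)$ converges weakly to a product of two independent $\Frechet(\alpha_0,1)$ variables, and then the Portmanteau lemma applied to the closed diagonal $\{(x,y): x=y\}$, which is a null set for the continuous product limit, gives the conclusion. You instead avoid establishing joint weak convergence altogether: you control the largest atom of the law of $M_{r_n}\vee c$ uniformly (via P\'olya's theorem and continuity of the Fr\'echet limit), build an almost-equiprobable quantile partition, and convert the coincidence event into a sum of same-cell events, each estimated by the product bound plus $\alpha(r_n)$ from the mixing definition; choosing $N_n\to\infty$ slowly then kills all three error terms. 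What each approach buys: the paper's version is shorter and reuses the same asymptotic-independence mechanism that drives its later big-block/small-block CLT argument (though the joint convergence itself is asserted rather than spelled out, and would be justified by essentially the factorization you perform by hand); your version is more elementary and self-contained, makes explicit that only $\alpha(r_n)\to 0$ — not the rate in Condition~\ref{cond:alpha} — and the continuity of the limit law are needed, and quantifies the coincidence probability through the explicit bound $1/N+\rho_n+N\,\alpha(r_n)$. Your closing observations (the separation by a full block being the reason the lemma compares blocks $1$ and $3$, and the atom control being the delicate point) are both accurate.
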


\begin{proof}[Proof of Lemma~\ref{lem:ties}]
By the domain-of-attraction condition combined with the strong mixing property, the sequence of random vectors $((M_{r_n,1} \vee c)/\sigma_{r_n}, (M_{r_n,3} \vee c)/\sigma_{r_n})$ converges weakly to the product of two independent $\Frechet(\alpha_0, 1)$ random variables. Apply the  Portmanteau lemma -- the set $\{ (x, y) \in \R^2 : x = y \}$ is closed and has zero probability in the limit. 
\end{proof}

\begin{lemma}[Moments of block maxima converge]
\label{lem:moment}
Under Conditions~\ref{cond:DA} and~\ref{cond:moment}, we have, for every $c \in (0, \infty)$,
\[
  \lim_{n \to \infty} \Exp[ f \bigl( ( M_n \vee c) / \sigma_n \bigr) ]
  = \int_0^\infty f(x) \, p_{\alpha_0,1}(x) \, \diff x
\]
for every measurable function $f : (0, \infty) \to \R$ which is continuous almost everywhere  and for which there exist $0<\eta<\mom$ such that $\abs{ f(x) } \le g_{\eta, \alpha_0}(x) = \{ x^{-\alpha_0} \ind(x\le e) +\log (x)  \ind(x>e)\}^{2+\eta}$.
\end{lemma}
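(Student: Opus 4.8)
The plan is to prove Lemma~\ref{lem:moment} along the classical line: first establish the weak convergence of $f\bigl((M_n\vee c)/\sigma_n\bigr)$, then produce a uniform $L^{1+\delta}$-type bound on these random variables, and finally invoke the standard fact that weak convergence together with such a bound forces convergence of the expectations.

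First I would handle the weak convergence. For fixed $x>0$ we have $c/\sigma_n<x$ for all large $n$ (since $\sigma_n\to\infty$ by Condition~\ref{cond:DA}), and then $\Pr\bigl((M_n\vee c)/\sigma_n\le x\bigr)=\Pr(M_n/\sigma_n\le x)\to e^{-x^{-\alpha_0}}$; for $x\le 0$ the left-hand side is zero because $(M_n\vee c)/\sigma_n\ge c/\sigma_n>0$. Hence $(M_n\vee c)/\sigma_n\weak X$ with $X\sim\Frechet(\alpha_0,1)$. Since the Fr\'echet law is absolutely continuous while the discontinuity set $D_f$ of $f$ has Lebesgue measure zero, we have $\Pr(X\in D_f)=0$, so the continuous mapping theorem yields $f\bigl((M_n\vee c)/\sigma_n\bigr)\weak f(X)$.

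Next I would produce the uniform $L^{1+\delta}$ bound. Write $b(x)=x^{-\alpha_0}\ind(x\le e)+\log(x)\,\ind(x>e)$, so that $g_{\eta,\alpha_0}=b^{2+\eta}$ and $\abs{f}\le b^{2+\eta}$. Pick $\delta>0$ small enough that $\eta':=(2+\eta)(1+\delta)-2<\mom$, which is possible because $\eta<\mom$. Then, for every $x>0$,
\[
  \abs{f(x)}^{1+\delta}\le b(x)^{2+\eta'}=g_{\eta',\alpha_0}(x)\le 1+g_{\mom,\alpha_0}(x),
\]
the last inequality because $b^{2+\eta'}\le\max(1,b^{2+\mom})$. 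Evaluating at $(M_n\vee c)/\sigma_n$, taking expectations and applying Condition~\ref{cond:moment} --- in the form with $M_n\vee 1$ replaced by $M_n\vee c$, as noted right after that condition --- I would obtain $\limsup_n\Exp\bigl[\,\abs{f((M_n\vee c)/\sigma_n)}^{1+\delta}\,\bigr]<\infty$.

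Finally, a sequence that converges in distribution and whose $L^{1+\delta}$-norms are bounded (in the $\limsup$ sense) converges in mean (cf.\ \citealp{Van98}, Theorem~2.20), so
\[
  \Exp\bigl[\,f\bigl((M_n\vee c)/\sigma_n\bigr)\,\bigr]\longrightarrow \Exp[f(X)]=\int_0^\infty f(x)\,p_{\alpha_0,1}(x)\,\diff x,
\]
the right-hand side being finite because $\abs{f}\le g_{\eta,\alpha_0}$ and $\int_0^\infty g_{\eta,\alpha_0}(x)\,p_{\alpha_0,1}(x)\,\diff x<\infty$ (near $0$ the super-exponential decay of $p_{\alpha_0,1}$ dominates the powers of $x^{-\alpha_0}$, and at infinity $(\log x)^{2+\eta}\le x^{\beta}$ for $x$ large and any $\beta>0$, so the tail is controlled by finiteness of $\int x^\beta p_{\alpha_0,1}$ with $\beta<\alpha_0$; cf.\ Lemma~\ref{lem:moments}). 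The main obstacle I anticipate is precisely the bookkeeping in the $L^{1+\delta}$ step: $g_{\eta,\alpha_0}$ is a power of the non-monomial $b$, so one cannot merely appeal to monotonicity of $\beta\mapsto x^\beta$, and the truncation constant $c$ has to be threaded through to match the strengthened form of Condition~\ref{cond:moment}; everything else is routine.
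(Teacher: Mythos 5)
Your proposal is correct and follows essentially the same route as the paper: weak convergence of $(M_n\vee c)/\sigma_n$ to the $\Frechet(\alpha_0,1)$ law from Condition~\ref{cond:DA} (the paper likewise first replaces $M_n\vee 1$ by $M_n\vee c$ in Condition~\ref{cond:moment}), combined with the asymptotic uniform integrability criterion of \citet[Example~2.21 / Theorem~2.20]{Van98}. Your explicit $L^{1+\delta}$ bookkeeping with $\eta'=(2+\eta)(1+\delta)-2<\mom$ simply fills in the details the paper leaves implicit, so there is nothing further to add.
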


\begin{proof}[Proof of Lemma~\ref{lem:moment}]
An elementary argument shows that we may replace $M_n \vee 1$  by $M_n\vee c$ in  \eqref{eq:mom1}.
Since $c / \sigma_n \to 0$ as $n \to \infty$, the sequence $(M_n \vee c) / \sigma_n$ converges weakly to the $\Frechet(\alpha_0, 1)$ distribution in view of Condition~\ref{cond:DA}. The result follows from Example 2.21 in \cite{Van98}.
\end{proof}

In order to separate maxima over consecutive blocks by a time lag of at least $\ell$, we clip off the final $\ell - 1$ variables within each block:
\begin{equation}
\label{eq:max:clipped}
  M_{r,i}^{[\ell]}
  = \max \{ \xi_t : (i-1)r+1 \le t \le ir - \ell + 1 \}.
\end{equation}
Clearly, $M_{r,i} \ge M_{r,i}^{[\ell]}$. 
The probability that the maximum over a block of size $r$ is attained by any of the final $\ell-1$ variables should be small; see Lemma~\ref{lem:clipping:2} below.

\begin{lemma}[Short blocks are small]
\label{lem:clipping}
Assume Condition~\ref{cond:DA}. If $\ell_n = o(r_n)$ 
and if $\alpha({\ell_n}) = o(\ell_n / r_n)$ 
as $n \to \infty$, then for all $\eps > 0$,
\begin{equation}
\label{eq:smallblock:eps}
  \Pr[ M_{\ell_n} \ge \eps \sigma_{r_n} ] = O( \ell_n / r_n ), \qquad n \to \infty.
\end{equation}
\end{lemma}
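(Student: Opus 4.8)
The plan is to bound the probability that the maximum over a block of length $\ell_n$ exceeds the threshold $\eps\sigma_{r_n}$ by comparing it to the maximum over a full block of length $r_n$, paying a price governed by the strong mixing coefficient. First I would fix $\eps>0$ and write $r_n = q_n \ell_n + s_n$ with $0 \le s_n < \ell_n$, so that a block of length $r_n$ contains $q_n$ disjoint sub-blocks of length $\ell_n$ (plus a short remainder), where $q_n = \floor{r_n/\ell_n} \to \infty$ because $\ell_n = o(r_n)$. By stationarity, each of these sub-blocks has the same distribution as $M_{\ell_n}$, so $\{M_{\ell_n} < \eps\sigma_{r_n}\}$ is, up to a shift, the event that one particular sub-block stays below the threshold.

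The key step is to iterate the mixing inequality across the $q_n$ sub-blocks. Since the event $\{M_{\ell_n,i}^{} < u\}$ for the $i$-th sub-block is measurable with respect to $\sigma(\xi_t : t \text{ in that sub-block})$, and consecutive sub-blocks are separated by a gap of $0$ — so I would instead separate them by inserting gaps, i.e.\ only keep every second sub-block, giving $\floor{q_n/2}$ sub-blocks pairwise separated by at least $\ell_n$ time units. Applying the strong mixing bound repeatedly,
\[
  \Pr\Bigl[ \bigcap_{j} \{M_{\ell_n, i_j} < u\} \Bigr]
  \le \prod_j \Pr[M_{\ell_n} < u] + (\text{number of gaps}) \cdot \alpha(\ell_n)
  = \Pr[M_{\ell_n} < u]^{m_n} + m_n\,\alpha(\ell_n),
\]
where $m_n = \floor{q_n/2} \asymp r_n/\ell_n$ and $u = \eps\sigma_{r_n}$. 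On the other hand, the left-hand side is at least $\Pr[M_{r_n} < \eps\sigma_{r_n}]$, which by Condition~\ref{cond:DA} converges to $\exp(-(\eps)^{-\alpha_0}/1) = \exp(-\eps^{-\alpha_0}) > 0$; hence for large $n$ it is bounded below by some $\delta = \delta(\eps) > 0$. Combining, $\Pr[M_{\ell_n} < \eps\sigma_{r_n}]^{m_n} \ge \delta - m_n\,\alpha(\ell_n)$. The hypothesis $\alpha(\ell_n) = o(\ell_n/r_n)$ forces $m_n\,\alpha(\ell_n) \to 0$, so the right-hand side is $\ge \delta/2$ eventually, giving $\Pr[M_{\ell_n} < \eps\sigma_{r_n}] \ge (\delta/2)^{1/m_n} = \exp\bigl(m_n^{-1}\log(\delta/2)\bigr) = 1 - O(1/m_n) = 1 - O(\ell_n/r_n)$. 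Taking complements yields $\Pr[M_{\ell_n} \ge \eps\sigma_{r_n}] = O(\ell_n/r_n)$, which is \eqref{eq:smallblock:eps}.

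The main obstacle I anticipate is handling the bookkeeping cleanly: one must be careful that the sub-blocks are genuinely separated by a lag of at least $\ell_n$ so that the mixing coefficient $\alpha(\ell_n)$ (and not $\alpha$ at a smaller lag) applies, which is why dropping every second sub-block is worthwhile; one should also check that the remainder piece $s_n$ of length $< \ell_n$ can simply be discarded (since $M_{r_n} \ge$ max over the retained sub-blocks, discarding parts only makes the max smaller, which is the direction we need for the lower bound on $\Pr[M_{r_n} < u]$). A secondary point is the elementary inequality $a^{1/m} \ge 1 - m^{-1}\log(1/a)$ for $a \in (0,1]$, used to convert the bound on $\Pr[M_{\ell_n} < u]^{m_n}$ into a bound on $\Pr[M_{\ell_n} < u]$ of the right order; this is routine. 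Everything else is a direct consequence of stationarity, the domain-of-attraction convergence in Condition~\ref{cond:DA} (which guarantees the limiting probability is a positive constant, bounded away from $0$), and the decay rate assumed for $\alpha(\ell_n)$.
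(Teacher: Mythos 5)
Your argument is correct, but it takes a genuinely different (and more self-contained) route than the paper. The paper settles the lemma in three lines by invoking Lemma~7.1 of \cite{BucSeg14}, which asserts $\Pr[ F_{r_n}(M_{\ell_n}) \ge u ] = O(\ell_n/r_n)$ for every fixed $u>0$ under exactly the hypotheses $\ell_n = o(r_n)$ and $\alpha(\ell_n) = o(\ell_n/r_n)$; Condition~\ref{cond:DA} is then only used to get $F_{r_n}(\eps\sigma_{r_n}) \to \exp(-\eps^{-\alpha_0})$, so that eventually $\{M_{\ell_n} \ge \eps\sigma_{r_n}\} \subseteq \{F_{r_n}(M_{\ell_n}) \ge \exp(-\eps^{-\alpha_0})/2\}$. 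What you do is, in effect, re-derive the needed special case of that external lemma from scratch: split the block of length $r_n$ into roughly $r_n/\ell_n$ sub-blocks of length $\ell_n$, keep every other one so the retained maxima are separated by lags of at least $\ell_n$, apply the telescoped mixing bound $\lvert \Pr(\bigcap_j A_j) - \prod_j \Pr(A_j)\rvert \le (m_n-1)\,\alpha(\ell_n)$, lower-bound the intersection probability by $\Pr[M_{r_n} < \eps\sigma_{r_n}] \to \exp(-\eps^{-\alpha_0}) > 0$, and convert $p_n^{m_n} \ge \delta/2$ into $1-p_n \le m_n^{-1}\log(2/\delta) = O(\ell_n/r_n)$ via $a^{1/m} \ge 1 - m^{-1}\log(1/a)$. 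Each step checks out: stationarity gives the sub-block maxima the law of $M_{\ell_n}$; the inclusion $\{M_{r_n}<u\} \subseteq \bigcap_j \{M_{\ell_n,i_j}<u\}$ goes in the right direction because the retained sub-blocks sit inside the first block of length $r_n$; dropping every second sub-block is indeed what makes $\alpha(\ell_n)$ (rather than $\alpha(1)$) applicable; and $m_n \asymp r_n/\ell_n$ together with $\alpha(\ell_n) = o(\ell_n/r_n)$ is precisely what kills the accumulated mixing error. The only cosmetic caveat is that the implied constant in \eqref{eq:smallblock:eps} depends on $\eps$ through $\delta$, which is all the lemma claims. Your route buys a proof that does not lean on the companion paper and makes transparent where each hypothesis enters; the paper's route buys brevity and a statement (the quantile form $\Pr[F_{r_n}(M_{\ell_n}) \ge u] = O(\ell_n/r_n)$) that is uniform in the threshold level and reusable elsewhere.
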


\begin{proof}[Proof of Lemma~\ref{lem:clipping}]
Let $F_r$ be the cumulative distribution function of $M_r$. By \citet[Lemma~7.1]{BucSeg14}, for every $u > 0$,
\begin{equation}
\label{eq:FrMl}
  \Pr[ F_{r_n} ( M_{\ell_n} ) \ge u ] = O( \ell_n / r_n ), \qquad n \to \infty.
\end{equation}
Fix $\eps > 0$. By assumption,
\[
  \lim_{n \to \infty} F_{r_n}( \eps \sigma_{r_n} ) = \exp( - \eps^{-\alpha_0} ).
\]
For sufficiently large $n$, we have
\begin{align*}
  \Pr[ M_{\ell_n} \ge \eps \sigma_n ]
  &\le \Pr[ F_{r_n}( M_{\ell_n} ) \ge F_{r_n} (\eps \sigma_n ) ] 
  \le \Pr[ F_{r_n}( M_{\ell_n} ) \ge \exp( - \eps^{-\alpha_0} ) / 2 ].
\end{align*}
Set $u = \exp(- \eps^{-\alpha_0} ) / 2$ in \eqref{eq:FrMl} to arrive at \eqref{eq:smallblock:eps}.
\end{proof}


\begin{lemma}[Clipping doesn't hurt]
\label{lem:clipping:2}
Assume Condition~\ref{cond:DA}.  If $\ell_n = o(r_n)$ 
and if $\alpha({\ell_n}) = o(\ell_n / r_n)$ 
as $n \to \infty$,
then
\begin{equation}
\label{eq:smallbigblock}
  \Pr[ M_{r_n} > M_{r_n-\ell_n} ] \to 0, \qquad n \to \infty.
\end{equation}
\end{lemma}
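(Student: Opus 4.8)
The plan is to fix an arbitrary level $\eps>0$, to cut the event $\{M_{r_n}>M_{r_n-\ell_n}\}$ at the threshold $\eps\sigma_{r_n}$, and to treat the resulting ``bulk'' and ``tail'' pieces with the domain-of-attraction condition and with Lemma~\ref{lem:clipping}, respectively. Note that the hypotheses of the present lemma are exactly those of Lemma~\ref{lem:clipping}, so the latter is available.

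First I would record the elementary set inclusion
\[
  \{M_{r_n}>M_{r_n-\ell_n}\}
  \subseteq
  \{M_{r_n}<\eps\sigma_{r_n}\}
  \cup
  \{\max(\xi_{r_n-\ell_n+1},\dots,\xi_{r_n})\ge\eps\sigma_{r_n}\}.
\]
Indeed, since $M_{r_n}=\max\bigl(M_{r_n-\ell_n},\,\max(\xi_{r_n-\ell_n+1},\dots,\xi_{r_n})\bigr)$, the event $\{M_{r_n}>M_{r_n-\ell_n}\}$ forces $M_{r_n}=\max(\xi_{r_n-\ell_n+1},\dots,\xi_{r_n})$; if in addition $M_{r_n}\ge\eps\sigma_{r_n}$, then the second event on the right-hand side occurs, which yields the inclusion.

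Next I would bound the two probabilities. For the second term, strict stationarity gives that $\max(\xi_{r_n-\ell_n+1},\dots,\xi_{r_n})$ has the same law as $M_{\ell_n}$, so Lemma~\ref{lem:clipping} applies and yields $\Pr[\max(\xi_{r_n-\ell_n+1},\dots,\xi_{r_n})\ge\eps\sigma_{r_n}]=\Pr[M_{\ell_n}\ge\eps\sigma_{r_n}]=O(\ell_n/r_n)\to0$. For the first term, $r_n\to\infty$ together with $M_n/\sigma_n\weak\Frechet(\alpha_0,1)$ (Condition~\ref{cond:DA}) gives $\Pr[M_{r_n}\le\eps\sigma_{r_n}]\to e^{-\eps^{-\alpha_0}}$, exactly as already used in the proof of Lemma~\ref{lem:clipping}. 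Combining the two bounds gives $\limsup_{n\to\infty}\Pr[M_{r_n}>M_{r_n-\ell_n}]\le e^{-\eps^{-\alpha_0}}$, and letting $\eps\downarrow0$ yields \eqref{eq:smallbigblock}.

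I do not expect any serious obstacle here: the whole argument is a size decomposition at level $\eps\sigma_{r_n}$, with Lemma~\ref{lem:clipping} doing the substantive work on the tail part and the domain-of-attraction convergence handling the bulk. In particular, and in contrast with some of the surrounding clipping lemmas, no comparison between $\sigma_{r_n-\ell_n}$ and $\sigma_{r_n}$ is needed, so the regularity part of Condition~\ref{cond:DA} is not used.
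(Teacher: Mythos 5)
Your proof is correct. It follows the same basic strategy as the paper — cut at the threshold $\eps\sigma_{r_n}$, control the contribution of the last $\ell_n$ observations via Lemma~\ref{lem:clipping} and stationarity, let the bulk term tend to $e^{-\eps^{-\alpha_0}}$ through the domain-of-attraction convergence, and then send $\eps\downarrow0$ — but with a slightly different decomposition: the paper bounds $\Pr[M_{r_n}>M_{r_n-\ell_n}]$ by $\Pr[M_{r_n-\ell_n}\le\eps\sigma_{r_n}]+\Pr[M_{\ell_n}>\eps\sigma_{r_n}]$, so its bulk event involves the \emph{clipped} maximum $M_{r_n-\ell_n}$, which forces an appeal to the regularity part of Condition~\ref{cond:DA} ($\sigma_{r_n-\ell_n}/\sigma_{r_n}\to1$, using $\ell_n=o(r_n)$) to identify the limit $\exp(-\eps^{-\alpha_0})$. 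Your inclusion instead puts the \emph{full} maximum $M_{r_n}$ in the bulk event, so only the weak convergence $M_n/\sigma_n\weak\Frechet(\alpha_0,1)$ is needed there, and your closing observation that the scaling-regularity clause is not used in this step is accurate. The gain is marginal (the regularity clause is assumed anyway and is used elsewhere, e.g.\ in the proof of Theorem~\ref{theo:asydis:stat}), but your variant is valid and, if anything, a touch leaner; the paper's variant has the mild advantage of directly displaying the event $\{M_{r_n-\ell_n}\le\eps\sigma_{r_n}\}$ that is the natural complement of the clipping construction.
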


\begin{proof}[Proof of Lemma~\ref{lem:clipping:2}]
Recall Lemma~\ref{lem:clipping}. For every $\eps > 0$ we have, by stationarity,
\[
  \Pr[ M_{r_n} > M_{r_n-\ell_n} ]
  \le \Pr[ M_{r_n - \ell_n} \le \eps \sigma_{r_n} ] + \Pr[ M_{\ell_n} > \eps \sigma_{r_n} ].
\]
Since $\sigma_{r_n - \ell_n} / \sigma_{r_n} \to 1$ as a consequence of Condition~\ref{cond:DA} and the fact that $\ell_n = o(r_n)$ as $n \to \infty$, the first term converges to $\exp( - \eps^{-\alpha_0} )$ as $n \to \infty$, whereas the second one converges to $0$ by Lemma~\ref{lem:clipping}. Since $\eps > 0$ was arbitrary, Equation~\eqref{eq:smallbigblock} follows.
\end{proof}

\begin{proof}[Proof of Theorem~\ref{theo:asydis:stat}]
We apply Theorem~\ref{theo:asydis} and Addendum~\ref{add:asydis} to the array $X_{n,i} = M_{r_n,i} \vee c$ and $v_n = \sqrt{k_n}$, where $c \in (0, \infty)$ is arbitrary and $i \in \{1, \ldots, k_n\}$. By Condition~\ref{cond:small}, we have $\lim_{n \to \infty} \Pr[ \forall i = 1, \ldots, k_n : X_{n,i} = M_{r_n,i} ] = 1$.

The not-all-tied property~\eqref{eq:noties} has been established in Lemma~\ref{lem:ties}.

We need to check Condition~\ref{cond:CLT}, and in particular that the distribution of the random vector $\bm{Y}$ in \eqref{eq:Y} is $\mathcal{N}_3( B, \Sigma_{\bm{Y}} )$ with $B$ as in the statement of Theorem~\ref{theo:asydis:stat} and $\Sigma_{\bm{Y}}$ as in \eqref{eq:Sigma}. Essentially, the proof employs the Bernstein big-block-small-block method in combination with the Lindeberg central limit theorem.

Let $\ell_n= \max\{ s_n, \ip{ r_n \sqrt{ \alpha(s_n)}} \}$, where $s_n= \ip{\sqrt{r_n}}$. Clearly, 
\begin{align} 
\label{eq:mixing:ellr}
\ell_n \to \infty, \quad \ell_n=o(r_n) \quad \text{and} \quad \alpha(\ell_n) = o(\ell_n/r_n), \text{ as } n \to \infty. \end{align}

Consider the truncated and rescaled block maxima
\begin{align*}
  Z_{r,i} &= (M_{r_n,i} \vee c) / \sigma_r,  \qquad
  Z_{r,i}^{[\ell_n]} = (M_{r_n,i}^{[\ell_n]} \vee c) / \sigma_r,
\end{align*}
with $M_{r,i}^{[\ell_n]}$ as in \eqref{eq:max:clipped}. Consider  the following empirical and population probability measures:
\begin{align*}
  \Pn f &= \frac{1}{k_n} \sum_{i=1}^{k_n} f(Z_{r_n,i}), &
  P_n f &= \Exp [f(Z_{r_n,i})], \\
  \Pn^{[\ell_n]} f &= \frac{1}{k_n} \sum_{i=1}^{k_n} f(Z_{r_n,i}^{[\ell_n]}), &
  P_n^{[\ell_n]} f &= \Exp [f(Z_{r_n,i}^{[\ell_n]})].
\end{align*}
Abbreviate the tentative limit distribution by $P = \Frechet(\alpha_0, 1)$. We will also need the following empirical processes:
\begin{align*}
  \Gn &= \sqrt{k_n} ( \Pn - P ) && \text{(uncentered)}, \\
  \tilde{\G}_n &= \sqrt{k_n} ( \Pn - P_n ) && \text{(centered)}, \\
  \tilde{\G}_n^{[\ell_n]} &= \sqrt{k_n} ( \Pn^{[\ell_n]} - P_n^{[\ell_n]} ) && \text{(centered)}.
\end{align*}
Finally, the bias arising from the finite block size is quantified by the operator
\[
  B_n = \sqrt{k_n} (P_n - P).
\]

\textit{Proof of Condition~\ref{cond:CLT}(i).}
Choose $\eta\in(2/\omega,\mom)$ and $0<\alpha_-<\alpha_0<\alpha_+$. Additional constraints on $\alpha_+$ will be imposed below, while the values of $\eta$ and $\alpha_-$ do not matter.
Recall the function class $\mathcal{F}_2(\alpha_-, \alpha_+)$ in~\eqref{eq:F2}. For every $f \in \mathcal{F}_2(\alpha_-, \alpha_+)$, 
we just need to show that 
\[ 
  \Pn f = P f + o_p(1), \qquad n \to \infty. 
\]
The domain-of-attraction property (Condition~\ref{cond:DA}) and the asymptotic moment bound (Condition~\ref{cond:moment}) yield
\[
  \Exp[ \Pn f ] = P_n f \to P f, \qquad n \to \infty,
\]
by uniform integrability, see Lemma~\ref{lem:moment} (note that $|f|$ is bounded by a multiple of $g_{0, \alpha_0}$ if $\alpha_+$ is chosen suitably small: $\alpha_+< 2 \alpha_0$ must be satisfied). Further,
\[
  \Pn f - P_n f = \frac{1}{\sqrt{k_n}} \tilde{\G}_n f.
\]
Below, see \eqref{eq:Delta}, we will show that
\begin{equation}
\label{eq:tildeGnf}
  \tilde{\G}_n f = \tilde{\G}_n^{[\ell_n]} f + o_p(1) = O_p(1) + o_p(1) = O_p(1), \qquad n \to \infty.
\end{equation}
It follows that, as required,
\[
  \Pn f = (\Pn f - P_n f) + P_n f = o_p(1) + Pf + o(1) = Pf + o_p(1), \qquad n \to \infty.
\]

\textit{Proof of Condition~\ref{cond:CLT}(ii).}
We can decompose the empirical process $\Gn$ in a stochastic term and a bias term:
\begin{align*}
  \Gn 
  &= \sqrt{k_n} ( \Pn - P_n ) + \sqrt{k_n} ( P_n - P ) 
  = \tilde{\G}_n + B_n.
\end{align*}
For $f \in \mathcal{H} = \{ f_1, f_2, f_3 \}$, the bias term $B_n f$ converges to $B(f)$ thanks to Condition~\ref{cond:bias}. It remains to treat the stochastic term $\tilde{\G}_n f$, for all $f \in \mathcal{F}_2(\alpha_-, \alpha_+)$ [in view of the proof of item (i); see~\eqref{eq:tildeGnf} above]. We will show that the finite-dimensional distributions of $\tilde{\G}_n$ converge to those of a $P$-Brownian bridge, $\G$, i.e., a zero-mean, Gaussian stochastic process with covariance function given by
\[
  \cov ( \G f, \G g ) 
  = 
  P \bigl( (f - Pf) (g - Pg) \bigr)
  = 
  \cov_P \bigl( f(X), g(X) \bigr), \qquad f, g \in \mathcal{F}_2 ( \alpha_-, \alpha_+ ).
\]

Decompose the stochastic term in two parts:
\begin{equation} \label{eq:Delta}
  \tilde{\G}_n = \tilde{\G}_n^{[\ell_n]} + \Delta_n.
\end{equation}
We will show that $\Delta_n$ converges to zero in probability and that the finite-dimensional distributions of $\tilde{\G}_n^{\scriptscriptstyle [\ell_n]}$ converge to those of $\G$.

First, we treat the main term, $\tilde{\G}_n^{\scriptscriptstyle [\ell_n]}$. By the Cram\'er--Wold device, it suffices to show that $\tilde{\G}_n^{\scriptscriptstyle [\ell_n]} g \weak \Gb g$ as $n\to\infty$, where $g$ is an arbitrary linear combination of functions $f\in \Fc_2(\alpha_-, \alpha_+)$.  Define
\[
\phi_{ni}(t) = \exp\big[ -\iu t k_n^{-1/2} \{ g(Z_{r_n,i}^{[\ell_n]}) - P_n^{[\ell_n]} g \} \big],
\]
with $\iu$ the imaginary unit. Note that the characteristic function  of $\tilde{\G}_n^{\scriptscriptstyle [\ell_n]} g$ can be written as $ t \mapsto \Eb[ \prod_{i=1}^{k_n} \phi_{ni}(t)]$. Successively applying Lemma 3.9 in \cite{DehPhi02}, we obtain that
\[
\left| \Eb\left[\prod_{i=1}^{k_n} \phi_{ni}(t)\right] - \prod_{i=1}^{k_n} \Eb[\phi_{ni}(t)] \right|
\le
2\pi k_n \max_{i=1}^{k_n} \alpha\left( \sigma \{ \phi_{ni}(t) \}, \sigma\left\{ \prod_{j=i+1}^{k_n} \phi_{nj}(t)\right\} \right),
\]
where $\alpha(\Ac_1, \Ac_2)$ denotes the alpha-mixing coefficient between the sigma-fields $\Ac_1$ and $\Ac_2$. Since the maxima $Z_{r,i}^{\scriptscriptstyle[\ell_n]}$ over different blocks are based on observations that are at least $\ell_n$ observations apart, the expression on the right-hand side of the last display is of the order $O(k_n \alpha(\ell_n))$, which converges to $0$ as a consequence of Equation~\eqref{eq:mixing:kdelta}. We can conclude that the weak limit of $\tilde{\G}_n^{\scriptscriptstyle [\ell_n]} g$ is the same as the one of
\[
\tilde {\Hb}_n^{ [\ell_n]} g = \sqrt{k_n}\left\{ \frac{1}{k_n} \sum_{i=1}^{k_n} g(\bar Z_{r_n,i}^{[\ell_n]}) - P_n^{[\ell_n]}g \right\},
\]
where $\bar Z_{r_n,i}^{\scriptscriptstyle[\ell_n]}$ are independent over $i\in \N$ and have the same distribution as $Z_{r_n,i}^{\scriptscriptstyle[\ell_n]}$. By the classical central limit theorem for row wise independent triangular arrays, the weak limit of $\tilde {\Hb}_n^{\scriptscriptstyle [\ell]} g$ is $\Gb g$: first, its variance 
\[
\Var(\tilde {\Hb}_n^{ [\ell_n]} g) = P_n^{[\ell_n]}g^2 - (P_n^{[\ell_n]} g)^2
\]
converges to $\Var(\Gb g)$ by Lemma~\ref{lem:moment}. Note that the square of any linear combination $g$ of functions $f\in \Fc_2(\alpha_-, \alpha_+)$ can be bounded by a multiple of $g_{\eta, \alpha_0}$, after possibly decreasing the value of $\alpha_+>\alpha_0$. Second, 
the Lyapunov Condition is satisfied: for  all $\delta>0$,  
\begin{align*}
\frac{1}{k_n^{1+\delta/2}} \sum_{i=1}^{k_n} \Exp\big[| g(\bar Z_{r_n,i}^{[\ell_n]}) - P_n^{[\ell_n]}g |^{2+\delta} \big] 
\end{align*}
converges to $0$ as $n\to \infty$ again as a consequence of Lemma~\ref{lem:moment}, as $|g|^{2+\delta}$ can also be bounded by a multiple of $g_{\eta,\alpha_0}$ if $\delta\in(0,\eta)$ and $\alpha_+>\alpha_0$ are chosen sufficiently small.

Now, consider the remainder term $\Delta_n$ in \eqref{eq:Delta}. Since $\tilde{\G}_n f$ and $\tilde{\G}_n^{[\ell_n]} f$ are centered, so is $\Delta_n f$, and
\begin{align*}
  \Exp[ (\Delta_n f)^2 ]
  &= \var ( \Delta_n f ) 
  = \frac{1}{k_n} \var \left( \sum_{i=1}^{k_n} \Delta_{r_n,i}^{[\ell_n]} f \right),
\end{align*}
where
$
  \Delta_{r,i}^{[\ell_n]} f = f(Z_{r,i}) - f(Z_{r,i}^{[\ell_n]}).
$
By stationarity and the Cauchy--Schwartz inequality,
\begin{align}
\nonumber
  \Exp[ (\Delta_n f)^2 ]
  &= 
  \var \left( \Delta_{r_n,1}^{[\ell_n]} f \right) 
  + 
  \frac{2}{k_n} \sum_{h=1}^{k_n-1} (k_n - h) \, 
  \cov \left( \Delta_{r_n,1}^{[\ell_n]} f, \Delta_{r_n,1+h}^{[\ell_n]} f \right) \\
\label{eq:MSEdecomp}
  &\le
  3 \var \left( \Delta_{r_n,1}^{[\ell_n]} f \right)
  + 2 \sum_{h=2}^{k_n-1} \abs{ \cov \left( \Delta_{r_n,1}^{[\ell_n]} f, \Delta_{r_n,1+h}^{[\ell_n]} f \right) }.
\end{align}
Please note that we left the term $h = 1$ out of the sum; whence the factor three in front of the variance term.

Since $\ell_n = o(r_n)$ as $n \to \infty$ by Condition~\ref{cond:alpha}, we have $\sigma_{r_n - \ell_n + 1} / \sigma_{r_n} \to 1$ as $n \to \infty$ by Condition~\ref{cond:DA}. The asymptotic moment bound in Condition~\ref{cond:moment} then ensures that we may choose $\delta\in(2/\omega, \mom)$ and $\alpha_+>\alpha_0$ such that, 
for  every $f \in \Fc_2(\alpha_-, \alpha_+)$, we have, by Lemma~\ref{lem:moment}, 
\begin{equation}
\label{eq:pnormp}
  \limsup_{n \to \infty} \Exp \left[ \abs{ \Delta_{r_n,1}^{[\ell_n ]} f }^{2+\delta} \right] < \infty.
\end{equation}

On the event that $M_{r_n,1} = M_{r_n-\ell_n+1}$, we have $\Delta_{r_n,1}^{[\ell_n]} f = 0$. The mixing rate in~\eqref{eq:mixing:ellr} together with Lemma~\ref{lem:clipping:2} then imply
\[
  \Delta_{r_n,1}^{[\ell_n]} f = o_p(1), \qquad n \to \infty.
\]
Lyapounov's inequality and the asymptotic moment bound \eqref{eq:pnormp} then ensure that
\begin{equation}
\label{eq:Delta:Lp}
  \lim_{n \to \infty} \Exp \left[ \abs{ \Delta_{r_n,1}^{[\ell_n]} f }^{2+\delta} \right] = 0,
  \qquad f \in \mathcal{F}_2(\alpha_-, \alpha_+).
\end{equation}

Recall Lemma~3.11 in \cite{DehPhi02}: for random variables $\xi$ and $\eta$ and for numbers $p, q \in [1, \infty]$ such that $1/p + 1/q < 1$,
\begin{equation}
\label{eq:alphaCov}
  \abs{ \cov(\xi, \eta) } 
  \le 
  10 \, \norm{\xi}_p \, \norm{\eta}_q \, \{\alpha(\sigma(\xi), \sigma(\eta))\}^{1 - 1/p - 1/q},
\end{equation}
where $\alpha( \mathcal{A}_1, \mathcal{A}_2 )$ denotes the strong mixing coefficient between two $\sigma$-fields $\mathcal{A}_1$ and $\mathcal{A}_2$. Use inequality \eqref{eq:alphaCov} with $p=q=2+\delta$ to bound the covariance terms in \eqref{eq:MSEdecomp}: 
\[
  \Exp[ (\Delta_n f)^2 ]
  \le 
    3 \, \norm{ \Delta_{r_n,1}^{[\ell_n]} f }_2^2 
  + 20 \, k_n \, \norm{ \Delta_{r_n,1}^{[\ell_n]} f }_{2+\delta}^2 \, \{ \alpha(r_n) \}^{\delta/(2+\delta)}.
\]
In view of \eqref{eq:Delta:Lp} and Condition~\ref{cond:alpha}, the right-hand side converges to zero since $\omega<2/\delta$.
\end{proof}

\subsection{Proof of Theorem~\ref{theo:ml}}
\label{subsec:proofs:iid}

\begin{proof}[Proof of Theorem~\ref{theo:ml}]
We apply Theorem~\ref{theo:asydis:stat}. To this end, we verify its conditions.

\textit{Proof of Condition~\ref{cond:DA}.}
The second-order regular variation condition \eqref{eq:SV:2} implies the first-order one in \eqref{eq:RV}, which is in turn equivalent to weak convergence of partial maxima as in \eqref{eq:DA}. Condition~\ref{cond:DA} follows with scaling sequence $\sigma_n = a_n$. The latter sequence is regularly varying \citep[Proposition~1.11]{Res87} with index $1/\alpha_0$, which implies that $\lim_{n \to \infty} a_{m_n} / a_n = 1$ whenever $\lim_{n \to \infty} m_n / n = 1$.

\textit{Proof of Condition~\ref{cond:small}.}
For any real $c$ we have, since $\log F(c) < 0$ and since $\log(k_n) = o(r_n)$ by \eqref{eq:klogk},
\begin{align*}
  \Pr[ \min( M_{r_n,1}, \ldots, M_{r_n,k_n} ) \le c ] 
  \le k_n \, F^{r_n}( c )
  = \exp \{ \log(k_n) + r_n \log F(c) \}
  \to 0, \qquad n \to \infty.
\end{align*}

\textit{Proof of Condition~\ref{cond:alpha}.}
Trivial, since $\alpha(\ell) = 0$ for integer $\ell \ge 1$.

\textit{Proof of Condition~\ref{cond:moment}.}
This follows from Lemma~\ref{lem:blockmoments} in the supplementary material (which in turn is a variant of Proposition~2.1(i) in \citealp{Res87}), where we prove that the sufficient Condition~\eqref{eq:momall} is satisfied.

\textit{Proof of Condition~\ref{cond:bias}.}
Recall Remark~\ref{rem:secor} and therein the functions $L$ and $g(u)=A(u)L(u)$. We begin by collecting some non-asymptotic bounds on the function~$L$. Fix $\delta\in(0,\alpha_0)$. Potter's theorem (\citealp{BGT87}, Theorem 1.5.6) implies that there exists some constant $x'(\delta)>0$  such that, for all $u \ge x'(\delta)$ and $x \ge x'(\delta)/u$, 
\begin{align} \label{eq:L1b}
\frac{L(u)}{L(ux)} \le (1+\delta)\, \max( x^{-\delta}, x^{\delta}) . 
\end{align}

As a consequence of Theorem B.2.18 in \cite{dHF06}, accredited to \cite{Dre98}, there exists some further constant $x''(\delta)>0$ such that, for all $u \ge x''(\delta)$ and $x \ge x''(\delta)/u$, 
\begin{align} \label{eq:L2b}
\left| \frac{L(ux) - L(u)}{g(u)} \right| 
\le 
\ c(\delta) \, \max( x^{\rho - \delta}, x^{\rho+\delta}) ,
\end{align}
for some constant $c(\delta)>0$. Define $x(\delta)=\max\{x'(\delta), x''(\delta),1\}$.

We are going to show Condition~\ref{cond:bias} for $c=x(\delta)$ and $\sigma_{r_n}=a_{r_n}$. For $i=1, \dots, k_n$, define $X_{n,i} = M_{r_n,i} \vee x(\delta)$. 
Let $P_n$ denote the common distribution of the rescaled, truncated block maxima $X_{n,i}/a_{r_n}$ and let $P$ denote the Fr\'echet($\alpha_0, 1$) distribution. Write $B_n = \sqrt{k_n} (P_n - P)$ and define the three-by-one vector $\bm{\beta}$ by
\begin{equation} \label{eq:beta}
  \bm{\beta}
  = 
    \frac{\lambda }{\abs{\rho} \alpha_0} 
      \begin{pmatrix}
	2 - \gamma - \Gamma(2+\tfrac{\abs{\rho}}{\alpha_0}) - \Gamma'(2+\tfrac{\abs{\rho}}{\alpha_0}) \\[1ex]
	\alpha_0  \Gamma(2+\tfrac{\abs{\rho}}{\alpha_0}) - \alpha_0 \\[1ex]
	1 - \Gamma(1+\tfrac{\abs{\rho}}{\alpha_0})
      \end{pmatrix} 
\end{equation}
if $\rho < 0$ and by
\begin{equation*}
  \bm{\beta}
  =
    \frac{\lambda }{\alpha_0^2} 
      \begin{pmatrix}  
	\gamma - (1-\gamma)^2 - \pi^2/6 \\
	\alpha_0(1 - \gamma) \\
	\gamma
      \end{pmatrix} 
\end{equation*}
if $\rho = 0$.
We will show that
\begin{equation}
\label{eq:bias:Y:iid}
  \lim_{n \to \infty} 
  \bigl( B_n x^{-\alpha_0} \log x, \, B_n x^{-\alpha_0}, \, B_n \log x \bigr)^T
  = \bm{\beta}.
\end{equation}
Elementary calculations yield that $M(\alpha_0) \, \bm{\beta} = \lambda \, B( \alpha_0, \rho)$ as required in \eqref{eq:bias:iid}.

Equation~\eqref{eq:bias:Y:iid} can be shown coordinatewise. 
We begin by some generalities. For any $f \in \Hc$ as in \eqref{eq:H}, we can write, for arbitrary $x,x_0\in(0,\infty)$,
\[
  f(x) = 
  \begin{cases}
    f(x_0) - \int_x^{x_0}  f'(y) \, dy, & \text{if $0 < x \le x_0$}, \\[1ex]
    f(x_0) + \int_{x_0}^x f'(y) \, dy, & \text{if $x_0 < x < \infty$}.
  \end{cases}
\]
By Fubini's theorem, with $G_n$ and $G$ denoting the cdf-s of $P_n$ and $P$, respectively, 
\begin{align*}
  P f 
  &= \int_{(0, x_0]} f(x) \, dP(x) + \int_{(x_0, \infty)} f(x) \, dP(x) \\
  &= f(x_0) 
  - \int_{x \in (0, x_0]} \int_{y=x}^{x_0} f'(y) \, dy \, dP(x)  + \int_{x \in (x_0, \infty)} \int_{y=x_0}^x f'(y) \, dy \, dP(x) \\
  &= f(x_0)
  - \int_{y=0}^{x_0} \int_{x \in (0, y]} \, dP(x) \, f'(y) \, dy+ \int_{y=x_0}^\infty \int_{x \in (y, \infty)} \, dP(x) \, f'(y) \, dy \\
  &= f(x_0) - \int_0^{x_0} G(y) \, f'(y) \, dy + \int_{x_0}^\infty \left\{ 1 - G(y) \right \} \, f'(y) \, dy,
\end{align*}
and the same formula holds with $P$ and $G$ replaced by $P_n$ and $G_n$, respectively.  We find that
\[
   B_n f = \sqrt {k_n}(P_n-P) f
  = - \int_0^\infty  \sqrt{k_n} \left\{ G_n(y) - G(y) \right \} \, f'(y) \, dy.
\]
Note that
\begin{align*}
  G(y) &= \exp ( - y^{-\alpha_0} ) \, \1_{(0, \infty)}(y),  \qquad
  G_n(y) = F^{r_n}(a_{r_n} y) \, \1_{[x(\delta)/a_{r_n}, \infty)}(y),
\end{align*}
From the definition of $L$ in \eqref{eq:defL}, we can write, for  $y \ge x(\delta) /a_{r_n}$,
\begin{equation*}
  G_n(y) = \exp \left( - y^{-\alpha_0} r_n \{ - \log F(a_{r_n}) \}  \, \frac{L(a_{r_n}y)}{L(a_{r_n})} \right).
\end{equation*}
For the sake of brevity, we will only carry out the subsequent parts of the proof in the case where $F$ is ultimately continuous, so that ${r_n} \, \{ -\log F(a_{r_n}) \} = 1$ for all sufficiently large $n$. In that case,  $B_n f =J_{n1}(f)+J_{n2}(f)$ 
where
\begin{align*}
J_{n1}(f) &=
\sqrt{k_n} \int_0^{x(\delta)/a_{r_n}} \exp(-y^{-\alpha_0})f'(y)\, dy, \\
J_{n2}(f) &= - \sqrt{k_n} \int_{x(\delta)/a_{r_n}} ^\infty \left[ \exp\left( -y^{-\alpha_0} \frac{L(a_{r_n}y)}{L(a_{r_n})} \right) - \exp(-y^{-\alpha_0})  \right]
f'(y)\, dy, 
\end{align*}

Let us first show that $J_{n1}(f)$ converges to $0$ for any $f \in  \Hc$. For that purpose, note that any $f \in  \Hc$ satisfies $|f'(x)| \le K x^{-\alpha_0-\eps-1}$ for any $\eps<1$ and for some constant $K=K(\eps)>0$. As a consequence, by \eqref{eq:ka}, for sufficiently large $n$,
\[
\max_{ f\in \Hc } |J_{n1}(f) | \le \{ \lambda+o(1) \} \frac{K}{A(a_{r_n})} \int_0^{x(\delta) /a_{r_n}}  \exp(-y^{-\alpha_0}) y^{-\alpha_0 - \eps - 1} \, dy.
\]
Since $A(x)$ is bounded from below by a multiple of $x^{\rho - \eps}$ for sufficiently large $x$ (by Remark~\ref{rem:secor} and Potter's theorem), the expression on the right-hand side of the last display can be easily seen to converge to $0$ for $n\to\infty$.

For the treatment of $J_{n2}$, note that
\begin{align*}
J(f,\rho) &\equiv \int_0^\infty  h_\rho(y) \exp\left( -y^{-\alpha_0}  \right) y^{-\alpha_0}f'(y)  \, dy \\
&=
\begin{cases} 
\int_0^\infty  h_\rho(y) \exp\left( -y^{-\alpha_0}  \right) y^{-2\alpha_0-1} (1-\alpha_0\log y)  \, dy &, f(y)=y^{-\alpha_0} \log y\\
\int_0^\infty  h_\rho(y) \exp\left( -y^{-\alpha_0}  \right)  (-\alpha_0y^{-2\alpha_0-1}) \, dy &, f(y)=y^{-\alpha_0} \\
\int_0^\infty  h_\rho(y) \exp\left( -y^{-\alpha_0}  \right)  y^{-\alpha_0-1} \, dy &, f(y)= \log y
\end{cases} \\
&=
\begin{cases} 
\Exp[h_\rho(Y) Y^{-\alpha_0}(\alpha_0^{-1}- \log Y) ] &, f(y)=y^{-\alpha_0} \log y\\
- \Exp[h_\rho(Y) Y^{-\alpha_0}]  &, f(y)=y^{-\alpha_0} \\
\alpha_0^{-1} \Exp[h_\rho(Y) ]  &, f(y)= \log y,
\end{cases}
\end{align*}
where $Y$ denotes a Fr\'echet$(\alpha_0,1)$ random variable. By Lemma~\ref{lem:moments} this implies
\begin{align*}
J(x^{-\alpha_0} \log x, \rho)
&=
\frac{1}{\rho\alpha_0} \left\{ \Gamma(2+\tfrac{\abs{\rho}}{\alpha_0})+\Gamma'(2+\tfrac{\abs{\rho}}{\alpha_0})-1 - \Gamma'(2) \right\} \\
&=
\frac{1}{\abs{\rho}\alpha_0} \left\{ 2 - \gamma -  \Gamma(2+\tfrac{\abs{\rho}}{\alpha_0}) - \Gamma'(2+\tfrac{\abs{\rho}}{\alpha_0}) \right\},  \\
J(x^{-\alpha_0}, \rho) 
&= 
\frac{1}{\rho} \left\{ \Gamma(2) -  \Gamma(2+\tfrac{\abs{\rho}}{\alpha_0})  \right\} 
=
\frac{1}{\abs \rho} \left\{  \Gamma(2+\tfrac{\abs{\rho}}{\alpha_0})  -1 \right\}, \\
J(\log x, \rho)
&=
\frac{1}{\rho \alpha_0} \left\{ \Gamma(1+\tfrac{\abs{\rho}}{\alpha_0}) -1 \right\} 
=
\frac{1}{\abs{\rho} \alpha_0} \left\{ 1- \Gamma(1+\tfrac{\abs{\rho}}{\alpha_0}) \right\}
\end{align*}
for $\rho<0$ and
\begin{align*}
J(x^{-\alpha_0} \log x, 0)
&=
- \frac{1}{\alpha_0^2} \left\{ \Gamma'(2)+ \Gamma''(2) \right\} 
= 
 \frac{1}{\alpha_0^2} \left\{ \gamma - (1-\gamma)^2 - \pi^2/6 \right\}, \\
J(x^{-\alpha_0}, 0) 
&= 
\frac{1}{\alpha_0}  \Gamma'(2)  
= \frac{1-\gamma}{\alpha_0},
\\
J(\log x, 0)&=
- \frac{1}{ \alpha_0^2} \Gamma'(1) 
=
 \frac{\gamma}{\alpha_0^2}.
\end{align*}
Hence, $\bm\beta= \lambda \bigl( J(x^{-\alpha_0} \log x, \rho), J(x^{-\alpha_0} , \rho), J( \log x, \rho)\bigr)^T$ and it is therefore sufficient to show that, for any $f\in \Hc$,
\begin{align} \label{eq:in2}
& J_{n2}(f) \to  \lambda\, J(f, \rho)
\end{align}
as $n\to\infty$. By the mean value theorem, we can write $J_{n2}(f)$ as
\[
J_{n2}(f) =   
\sqrt{k_n} A(a_{r_n}) \int_{x(\delta)/a_{r_n}}^\infty
\frac{L(a_{r_n}y) - L(a_{r_n})}{A(a_{r_n})L(a_{r_n})} 
 \exp\left( -y^{-\alpha_0} \xi_n(y)  \right) y^{-\alpha_0} f'(y)\, dy
\]
for some $\xi_n(y)$ between ${L(a_{r_n}y)}/{L(a_{r_n})}$ and $1$. For $n\to\infty$, the factor in front of this integral converges to $\lambda$ by assumption \eqref{eq:ka}, while the integrand in this integral converges to
\[
 h_\rho(y) \exp\left( -y^{-\alpha_0}  \right) y^{-\alpha_0} f'(y),
\]
pointwise in $y\in(0,\infty)$, by Condition~\ref{cond:secor}. Hence, the convergence in \eqref{eq:in2} follows from dominated convergence if we show that 
\[
f_n(y) =  \ind\left(y>\tfrac{x(\delta)}{a_{r_n}}\right) \left|
\frac{L(a_{r_n}y) - L(a_{r_n})}{A(a_{r_n})L(a_{r_n})} \right|
 \exp\left( -y^{-\alpha_0} \xi_n(y)  \right) y^{-\alpha_0}f'(y)
\]
can be bounded by an integrable function on $(0,\infty)$. We split the proof into two cases. 

First,  for any $1 \ge y \ge x(\delta)/a_{r_n}$, 
\[
\left| \frac{L(a_{r_n}y) - L(a_{r_n})}{A(a_{r_n})L(a_{r_n})} \right| \le c(\delta) y^{\rho-\delta}
\]
from \eqref{eq:L2b} and 
\begin{align*} 
\xi_n(y)  
\ge
 \min\left(1, \frac{L(a_{r_n}y)}{L(a_{r_n})}\right) 
\ge
(1+\delta)^{-1} y^{\delta} 
\end{align*}
from \eqref{eq:L1b}. Moreover, for any $f\in \Hc$, the function $f'(y)$ is bounded by a multiple of $y^{-\alpha_0 - \delta - 1}$ for $y\le 1$.
Therefore, for any $y\in(0,1)$,
\[
f_n(y) \le c'(\delta) \exp\{ - (1+\delta)^{-1} y^{-\alpha_0+\delta} \} y^{-2\alpha_0-2\delta-1+\rho} 
\]
and the function on the right is integrable on $(0,1)$ since $\delta< \alpha_0$.

Second, for $y\in[1,\infty)$, we have 
\[
\left| \frac{L(a_{r_n}y) - L(a_{r_n})}{A(a_{r_n})L(a_{r_n})} \right| \le c(\delta) y^{\rho+\delta}
\]
from \eqref{eq:L2b} and 
\begin{align*} 
\xi_n(y)  
\ge
 \min\left(1, \frac{L(a_{r_n}y)}{L(a_{r_n})}\right) 
\ge
(1+\delta)^{-1} y^{-\delta} 
\end{align*}
from \eqref{eq:L1b}. Moreover, $f'(y)$ is bounded by a multiple of $y^{-1}$ for any $y\ge1$ and any $f\in  \Hc$. Therefore,
\[
f_{n}(y) \le c''(\delta) \, y^{-\alpha_0-1+\rho+\delta}
\]
which is easily integrable on $[1,\infty)$. 
\end{proof}

\section{Auxiliary results}
\label{sec:aux}

Let $\Gamma(x) = \int_0^\infty t^{x-1}e^{-t} \, \diff t$ be the gamma function and let $\Gamma'$ and $\Gamma''$ be its first and second derivative, respectively. 
All proofs for this section are given in Section~\ref{sec:proofs:aux} in the supplementary material.

\begin{lemma}[Moments]
\label{lem:moments}
Let $P$ denote the Fr\'echet distribution with parameter vector $(\alpha_0, 1)$, for some $\alpha_0 \in (0, \infty)$. For all $\alpha \in (-\alpha_0, \infty)$,
\begin{align*}
  \int_0^\infty x^{-\alpha} \, \diff P(x) &= \Gamma(1+\alpha/\alpha_0), \\
  \int_0^\infty x^{-\alpha} \log(x) \, \diff P(x) &= -\frac{1}{\alpha_0} \Gamma'(1 + \alpha/\alpha_0), \\
  \int_0^\infty x^{-\alpha} (\log(x))^2 \, \diff P(x) &= \frac{1}{\alpha_0^2} \, \Gamma''(1 + \alpha/\alpha_0).
\end{align*}
\end{lemma}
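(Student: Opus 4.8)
\textbf{Proof proposal for Lemma~\ref{lem:moments}.}
The plan is to reduce all three integrals to the integral representation of the gamma function and its derivatives by a single change of variables. Recall that the Fr\'echet$(\alpha_0,1)$ density is $p_{\alpha_0,1}(x) = \alpha_0 \, x^{-\alpha_0-1} \exp(-x^{-\alpha_0})$ for $x>0$. First I would substitute $t = x^{-\alpha_0}$, so that $x = t^{-1/\alpha_0}$, $\alpha_0 x^{-\alpha_0-1}\,\diff x = -\diff t$, $\log x = -\tfrac{1}{\alpha_0}\log t$, and the limits $x\in(0,\infty)$ map to $t\in(\infty,0)$. For a measurable $\phi$ this gives
\[
  \int_0^\infty \phi(x) \, x^{-\alpha} \, p_{\alpha_0,1}(x) \, \diff x
  = \int_0^\infty \phi(t^{-1/\alpha_0}) \, t^{\alpha/\alpha_0} \, e^{-t} \, \diff t,
\]
whenever either side is well defined.

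Taking $\phi \equiv 1$ yields $\int_0^\infty x^{-\alpha}\,\diff P(x) = \int_0^\infty t^{\alpha/\alpha_0} e^{-t}\,\diff t = \Gamma(1+\alpha/\alpha_0)$, the integral converging precisely because $\alpha/\alpha_0 > -1$, i.e.\ $\alpha > -\alpha_0$; this also confirms finiteness of all three quantities on the stated range. Taking $\phi(x) = \log x$ gives $-\tfrac{1}{\alpha_0}\int_0^\infty t^{\alpha/\alpha_0} (\log t) \, e^{-t}\,\diff t$, and taking $\phi(x) = (\log x)^2$ gives $\tfrac{1}{\alpha_0^2}\int_0^\infty t^{\alpha/\alpha_0} (\log t)^2 \, e^{-t}\,\diff t$. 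It then remains to identify $\int_0^\infty t^{s-1}(\log t)^m e^{-t}\,\diff t = \Gamma^{(m)}(s)$ for $m\in\{1,2\}$ with $s = 1+\alpha/\alpha_0 > 0$.

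The latter identity is the only point requiring a word of justification: differentiating $\Gamma(s) = \int_0^\infty t^{s-1} e^{-t}\,\diff t$ twice under the integral sign. I would justify this by the standard domination argument, bounding $|\partial_s^m (t^{s-1} e^{-t})| = t^{s-1}|\log t|^m e^{-t}$ locally uniformly in $s$ on a compact neighbourhood $[s_0-\eta, s_0+\eta]\subset(0,\infty)$ by an integrable function of $t$ (split at $t=1$: near $0$ use $t^{s_0-\eta-1}|\log t|^m$, near $\infty$ use $t^{s_0+\eta-1}|\log t|^m e^{-t}$, both integrable), and then invoke the theorem on differentiation under the integral sign. Combining this with the two displayed identities gives $\int_0^\infty x^{-\alpha}\log(x)\,\diff P(x) = -\tfrac{1}{\alpha_0}\Gamma'(1+\alpha/\alpha_0)$ and $\int_0^\infty x^{-\alpha}(\log x)^2\,\diff P(x) = \tfrac{1}{\alpha_0^2}\Gamma''(1+\alpha/\alpha_0)$, completing the proof. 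I do not anticipate a genuine obstacle here; the only care needed is the elementary domination bound ensuring that the substitution and the differentiation under the integral are legitimate on the whole range $\alpha > -\alpha_0$.
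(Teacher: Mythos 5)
Your proposal is correct and is essentially the paper's own argument: the substitution $t = x^{-\alpha_0}$ is exactly the paper's observation that $P$ is the law of $Y^{-1/\alpha_0}$ for $Y$ unit exponential, after which all three integrals reduce to $\int_0^\infty t^{\alpha/\alpha_0}(\log t)^m e^{-t}\,\diff t = \Gamma^{(m)}(1+\alpha/\alpha_0)$. The only difference is that you spell out the (standard) domination argument for differentiating the gamma integral under the integral sign, which the paper takes for granted.
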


\begin{lemma}[Covariance matrix]
\label{lem:cov}
Let $X$ be a random variable whose distribution is Fr\'echet with parameter vector $(\alpha_0, 1)$. The covariance matrix of the random vector
$
  \bm{Y} = (Y_1, Y_2, Y_3)^T = \bigl( X^{-\alpha_0} \log(X), \, X^{-\alpha_0}, \, \log(X) \bigr)^T
$
is equal to
\begin{align*}
  \cov( \bm{Y} )
  &=
  \frac{1}{\alpha_0^2}
  \begin{pmatrix}
    1-4\gamma+\gamma^2+\pi^2/3 & \alpha_0(\gamma - 2) & \pi^2/6-\gamma \\
    \alpha_0(\gamma - 2) & \alpha_0^2 & -\alpha_0 \\
    \pi^2/6-\gamma &-\alpha_0 &  \pi^2/6
  \end{pmatrix}.
\end{align*}
\end{lemma}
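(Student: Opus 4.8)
The plan is to read off all first and second moments of the three coordinates of $\bm Y$ directly from Lemma~\ref{lem:moments}. The key observation is that every entrywise product $Y_iY_j$ is again a function of the form $x\mapsto x^{-\alpha}(\log x)^m$ with $\alpha\in\{0,\alpha_0,2\alpha_0\}$ and $m\in\{0,1,2\}$; since $0,\alpha_0,2\alpha_0$ all lie in $(-\alpha_0,\infty)$, Lemma~\ref{lem:moments} applies to each of them and expresses the answer through $\Gamma,\Gamma',\Gamma''$ evaluated at $1$, $2$ or $3$. A convenient shortcut worth noting first: if $X$ is Fr\'echet$(\alpha_0,1)$ then $W=X^{\alpha_0}$ is Fr\'echet$(1,1)$, and $\bm Y=\operatorname{diag}(\alpha_0^{-1},1,\alpha_0^{-1})\,\bm Y'$ with $\bm Y'=(W^{-1}\log W,\,W^{-1},\,\log W)^T$, so that $\cov(\bm Y)=\operatorname{diag}(\alpha_0^{-1},1,\alpha_0^{-1})\,\cov(\bm Y')\,\operatorname{diag}(\alpha_0^{-1},1,\alpha_0^{-1})$. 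This already matches the $\alpha_0$-dependence of the claimed matrix (the $(1,1),(1,3),(3,3)$ entries carry $\alpha_0^{-2}$, the $(1,2),(2,3)$ entries $\alpha_0^{-1}$, the $(2,2)$ entry $1$), so it is enough to do the computation for $\alpha_0=1$; I will nonetheless carry the general $\alpha_0$ through, as it costs nothing.

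Concretely I would first record, from Lemma~\ref{lem:moments},
\[
  \Exp[Y_1]=-\Gamma'(2)/\alpha_0,\qquad \Exp[Y_2]=\Gamma(2)=1,\qquad \Exp[Y_3]=-\Gamma'(1)/\alpha_0,
\]
\[
  \Exp[Y_1^2]=\Gamma''(3)/\alpha_0^2,\qquad \Exp[Y_2^2]=\Gamma(3)=2,\qquad \Exp[Y_3^2]=\Gamma''(1)/\alpha_0^2,
\]
\[
  \Exp[Y_1Y_2]=-\Gamma'(3)/\alpha_0,\qquad \Exp[Y_1Y_3]=\Gamma''(2)/\alpha_0^2,\qquad \Exp[Y_2Y_3]=-\Gamma'(2)/\alpha_0.
\]
Subtracting the products of the means gives the six distinct covariances
\[
  \Var(Y_1)=\tfrac{1}{\alpha_0^2}\bigl(\Gamma''(3)-\Gamma'(2)^2\bigr),\qquad
  \Var(Y_2)=1,\qquad
  \Var(Y_3)=\tfrac{1}{\alpha_0^2}\bigl(\Gamma''(1)-\Gamma'(1)^2\bigr),
\]
\[
  \cov(Y_1,Y_2)=-\tfrac{1}{\alpha_0}\bigl(\Gamma'(3)-\Gamma'(2)\bigr),\quad
  \cov(Y_1,Y_3)=\tfrac{1}{\alpha_0^2}\bigl(\Gamma''(2)-\Gamma'(1)\Gamma'(2)\bigr),\quad
  \cov(Y_2,Y_3)=-\tfrac{1}{\alpha_0}\bigl(\Gamma'(2)-\Gamma'(1)\bigr).
\]

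Finally I would evaluate these using the recursions $\Gamma(z+1)=z\Gamma(z)$, $\Gamma'(z+1)=\Gamma(z)+z\Gamma'(z)$, $\Gamma''(z+1)=2\Gamma'(z)+z\Gamma''(z)$, together with $\Gamma(1)=1$, $\Gamma'(1)=-\gamma$ and $\Gamma''(1)=\Gamma(1)\bigl(\psi'(1)+\psi(1)^2\bigr)=\pi^2/6+\gamma^2$. This gives $\Gamma'(2)=1-\gamma$, $\Gamma'(3)=3-2\gamma$, $\Gamma''(2)=(1-\gamma)^2+\pi^2/6-1$ and $\Gamma''(3)=2-6\gamma+2\gamma^2+\pi^2/3$. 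Substituting and simplifying turns the six expressions above into $\alpha_0^{-2}(1-4\gamma+\gamma^2+\pi^2/3)$, $1$, $\alpha_0^{-2}\pi^2/6$, $\alpha_0^{-1}(\gamma-2)$, $\alpha_0^{-2}(\pi^2/6-\gamma)$ and $-\alpha_0^{-1}$, which are precisely the entries of the matrix in the statement; filling in the symmetric entries completes the proof. There is no conceptual difficulty; the only thing demanding care is the bookkeeping of the logarithmic moments — tracking the signs, distinguishing the $1/\alpha_0$ versus $1/\alpha_0^2$ prefactors, and remembering that in Lemma~\ref{lem:moments} the gamma function and its derivatives are evaluated at $1+\alpha/\alpha_0$, not at $\alpha/\alpha_0$.
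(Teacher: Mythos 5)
Your proposal is correct and follows essentially the same route as the paper's proof: apply Lemma~\ref{lem:moments} with $\alpha \in \{0, \alpha_0, 2\alpha_0\}$ to obtain all first and second moments in terms of $\Gamma, \Gamma', \Gamma''$ at $1$, $2$, $3$, then substitute the special values ($\Gamma'(1)=-\gamma$, $\Gamma''(1)=\gamma^2+\pi^2/6$, etc.) and simplify; your intermediate expressions and final entries agree with the paper's. The scaling remark via $W=X^{\alpha_0}$ is a nice sanity check on the $\alpha_0$-dependence but is not needed, and the paper does not use it.
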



\begin{lemma}[Fisher information]
\label{lem:fisher}
Let $P_\theta$ denote the Fr\'echet distribution with parameter $\theta=(\alpha,\sigma)\in(0,\infty)^2$. The Fisher information $I_{\theta} = P_\theta (\dot{\ell}_{\theta} \dot{\ell}_{\theta}^T)$ is given by 
\[
I_{\theta} =
\left( 
\begin{array}{cc}  \iota_{11} & \iota_{12} \\ \iota_{21} & \iota_{22} \end{array} \right)
=
\left( 
\begin{array}{cc} \{ (1-\gamma)^2+\pi^2/6\}/{\alpha^2} & (1-\gamma)/{\sigma} \\ (1-\gamma)/{\sigma} & \alpha^2/\sigma^2 \end{array} \right).
\]
Its inverse is given by
\[
I_{\theta}^{-1} 
= \frac{6}{\pi^2} \left( 
\begin{array}{cc} \alpha^2 & (\gamma-1) {\sigma} \\ (\gamma-1){\sigma} &  (\sigma/\alpha)^2 \{ (1-\gamma)^2 +\pi^2/6\} \end{array} \right).
\]
\end{lemma}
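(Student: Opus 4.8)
The plan is to compute $I_\theta = P_\theta(\dot\ell_\theta \dot\ell_\theta^T)$ entrywise by a change of variables that turns the Fr\'echet law into a standard exponential, reducing each entry to a few moments of the exponential distribution that are expressible through $\Gamma$, $\Gamma'$ and $\Gamma''$ at integer arguments. Concretely, for $X \sim P_\theta$ I would set $W = (X/\sigma)^{-\alpha}$; a one-line computation of its distribution function shows $W$ is standard exponential, and $\log(X/\sigma) = -\alpha^{-1}\log W$. Substituting into the score functions \eqref{eq:score:alpha} and \eqref{eq:score:sigma} yields the compact forms
\[
  \dot\ell_{\theta,1}(X) = \alpha^{-1}\bigl\{ 1 - (W-1)\log W \bigr\}, \qquad
  \dot\ell_{\theta,2}(X) = (\alpha/\sigma)\,(1 - W).
\]
From this representation the $\sigma$-dependence factors out at once ($\iota_{22}\propto\sigma^{-2}$, $\iota_{12}\propto\sigma^{-1}$, and $\iota_{11}$ free of $\sigma$), matching the claimed structure, and every entry is now an expectation in the single variable $W$.

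Next I would evaluate the three expectations using $\Exp[W^k \log W] = \Gamma'(k+1)$ and $\Exp[W^k (\log W)^2] = \Gamma''(k+1)$, together with the integer values $\Gamma'(1)=-\gamma$, $\Gamma'(2)=1-\gamma$, $\Gamma'(3)=3-2\gamma$, $\Gamma''(1)=\gamma^2+\pi^2/6$, $\Gamma''(2)=(1-\gamma)^2+\pi^2/6-1$ (recalled in the excerpt) and $\Gamma''(3)=2-6\gamma+2\gamma^2+\pi^2/3$, all obtained from $\Gamma'(s)=\Gamma(s)\psi(s)$ and $\Gamma''(s)=\Gamma(s)\{\psi(s)^2+\psi'(s)\}$ at $s\in\{1,2,3\}$. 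Since $\Exp[W]=1$ and $\Exp[W^2]=2$ give $\Exp[(1-W)^2]=1$, one gets $\iota_{22}=\alpha^2/\sigma^2$ immediately. For the off-diagonal entry I would expand $\{1-(W-1)\log W\}(1-W) = (1-W)+(W-1)^2\log W$ and use $\Exp[(W-1)^2\log W]=\Gamma'(3)-2\Gamma'(2)+\Gamma'(1)=1-\gamma$, giving $\iota_{12}=(1-\gamma)/\sigma$. For the diagonal entry I would expand $\{1-(W-1)\log W\}^2 = 1 - 2(W-1)\log W + (W-1)^2(\log W)^2$, use $\Exp[(W-1)\log W]=\Gamma'(2)-\Gamma'(1)=1$ and $\Exp[(W-1)^2(\log W)^2]=\Gamma''(3)-2\Gamma''(2)+\Gamma''(1)=1+(1-\gamma)^2+\pi^2/6$, so that the bracket collapses to $(1-\gamma)^2+\pi^2/6$ and $\iota_{11}=\{(1-\gamma)^2+\pi^2/6\}/\alpha^2$.

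Finally, the inverse follows from the $2\times 2$ cofactor formula. The determinant simplifies cleanly because the $(1-\gamma)^2$ terms cancel:
\[
  \iota_{11}\iota_{22} - \iota_{12}^2
  = \frac{(1-\gamma)^2+\pi^2/6}{\sigma^2} - \frac{(1-\gamma)^2}{\sigma^2}
  = \frac{\pi^2}{6\sigma^2}.
\]
Hence $I_\theta^{-1} = (6\sigma^2/\pi^2)\bigl(\begin{smallmatrix}\iota_{22} & -\iota_{12}\\ -\iota_{12} & \iota_{11}\end{smallmatrix}\bigr)$, and reading off the entries while using $-(1-\gamma)=\gamma-1$ reproduces the stated form of $I_\theta^{-1}$ (and, at $\sigma=1$, Equation~\eqref{eq:fisher}).

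The computation has no conceptual difficulty once the exponential substitution is in place; the only place demanding care is the second-order moment entering $\iota_{11}$, namely $\Gamma''(3)-2\Gamma''(2)+\Gamma''(1)$, where the $\gamma^2$ and $\pi^2/3$ contributions must cancel exactly to leave $1+(1-\gamma)^2+\pi^2/6$. Keeping the signs and the integer-argument values of $\psi$ and $\psi'$ straight there is the main (purely bookkeeping) obstacle; everything else is routine.
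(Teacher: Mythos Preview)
Your proposal is correct and follows essentially the same approach as the paper: the paper rescales to $Z=X/\sigma\sim P_{(\alpha,1)}$ and invokes Lemma~\ref{lem:moments} (whose proof is precisely your exponential substitution $W=Z^{-\alpha}$), arriving at the very same combinations $\Gamma'(3)-2\Gamma'(2)+\Gamma'(1)$ and $\Gamma''(3)-2\Gamma''(2)+\Gamma''(1)$. The only addition in your write-up is the explicit $2\times2$ inversion, which the paper simply states.
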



\section*{Acknowledgments}
The authors would like to thank two anonymous referees and an Associate Editor for their constructive comments on an earlier version of this manuscript, and in particular for suggesting a sharpening of Conditions~\ref{cond:LLN} and~\ref{cond:CLT} and for pointing out the connection between Equations~\eqref{eq:ka} and~\eqref{eq:klogk}.

The research by A.\ B\"ucher  has been supported by the Collaborative Research Center ``Statistical modeling of nonlinear dynamic processes'' (SFB 823, Project A7) of the German Research Foundation, which is gratefully acknowledged. Parts of this paper were written when A.\ B\"ucher was a visiting professor at TU Dortmund University.

J. Segers gratefully acknowledges funding by contract ``Projet d'Act\-ions de Re\-cher\-che Concert\'ees'' No.\ 12/17-045 of the ``Communaut\'e fran\c{c}aise de Belgique'' and by IAP research network Grant P7/06 of the Belgian government (Belgian Science Policy).

\bibliographystyle{imsart-nameyear}

\bibliography{biblio}

\addtocounter{section}{1}

\newpage

{\LARGE\noindent Supplementary Material on  \\ [2mm] ``Maximum likelihood estimation for the Fr\'echet distribution  \\ [3mm] based on block maxima extracted from a time series''}

\bigskip
\noindent AXEL B\"UCHER and JOHAN SEGERS
\smallskip

\noindent \textit{Ruhr-Universit\"at Bochum and Universit\'e catholique de Louvain}
\bigskip


\noindent This supplementary material contains a lemma on moment convergence of block maxima used in the proof of Theorem~\ref{theo:ml} (in Section~\ref{sec:lemmom}),  the proof of Lemma~\ref{lem:movmax} (in Section~\ref{subsec:proofs:ex}) and  the proofs of auxiliary lemmas from Section~\ref{sec:aux} (in Section~\ref{sec:proofs:aux}) from the main paper.
Furthermore, we present additional Monte Carlo simulation results to quantify the finite-sample bias and variance of the maximum likelihood estimator (in Section~\ref{sec:simulextra}).

\section{Moment convergence of block maxima}\label{sec:lemmom}

The following Lemma is a variant of Proposition~2.1(i) in \cite{Res87}. It is needed in the proof of Theorem~\ref{theo:ml}.

\begin{lemma} 
\label{lem:blockmoments}
Let $\xi_1, \xi_2, \ldots$ be independent random variables with common distribution function $F$ satisfying \eqref{eq:RV}. Let $M_n = \max(\xi_1, \ldots, \xi_n)$. For every $\beta \in (-\infty, \alpha_0)$ and any constant $c >0$, we have
\[
  \limsup_{n \to \infty} 
  \Exp \bigl[ \bigl( (M_n \vee c) / a_n \bigr)^\beta \bigr]
  < 
  \infty.
\]
\end{lemma}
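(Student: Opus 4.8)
The plan is to bound $\Exp[Y^\beta]$ with $Y = (M_n \vee c)/a_n$ via the layer-cake (tail-integral) representation, using the normalization property $\lim_{n\to\infty} n\{-\log F(a_n)\} = 1$ from \eqref{eq:scaling}, regular variation of $-\log F$ with index $-\alpha_0$ from \eqref{eq:RV}, and two-sided Potter bounds (Theorem~1.5.6 in \citealp{BGT87}). The case $\beta = 0$ is trivial, so the argument splits into $\beta \in (0,\alpha_0)$ and $\beta < 0$. Recall also that \eqref{eq:RV} guarantees $F(x) < 1$ for all $x$, so $-\log F > 0$ everywhere and $F(x)^n \to 0$ exponentially fast for each fixed $x$.

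For $\beta \in (0,\alpha_0)$, write $\Exp[Y^\beta] = \beta \int_0^\infty s^{\beta-1}\Pr[Y > s]\,ds$. On $s \le c/a_n$ the crude bound $\Pr[Y>s]\le 1$ contributes $(c/a_n)^\beta \to 0$; on $c/a_n < s \le 1$ it contributes at most $\beta\int_0^1 s^{\beta-1}\,ds = 1$. For $s > 1$ use $\Pr[Y > s] = 1 - F^n(a_n s) \le n\{-\log F(a_n s)\} = n\{-\log F(a_n)\}\cdot \tfrac{-\log F(a_n s)}{-\log F(a_n)}$, bound the first factor by $2$ for $n$ large, and the ratio by the upper Potter bound $(1+\delta)s^{-\alpha_0+\delta}$ (valid for $s\ge 1$ and $a_n$ large). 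Picking $\delta < \alpha_0-\beta$ makes $\int_1^\infty s^{\beta-\alpha_0+\delta-1}\,ds$ finite, whence $\limsup_n \Exp[Y^\beta] < \infty$.

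For $\beta < 0$ use instead $\Exp[Y^\beta] = |\beta|\int_0^\infty u^{\beta-1}\Pr[Y \le u]\,du$ together with the \emph{exact} identity $\Pr[Y\le u] = F^n(a_n u)\,\ind(u \ge c/a_n)$; the substitution $v = a_n u$ turns this into $|\beta|\,a_n^{-\beta}\int_c^\infty F^n(v)\,v^{\beta-1}\,dv$, in which the diverging prefactor $a_n^{-\beta} = a_n^{|\beta|}$ (regularly varying, hence of merely polynomial growth) must be absorbed. Split the integral at a fixed Potter threshold $u_0$ and at $a_n$. On $[a_n,\infty)$ one has $F^n\le 1$ and $\int_{a_n}^\infty v^{\beta-1}\,dv = a_n^\beta/|\beta|$, so that piece is bounded by $1$. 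On the fixed compact range $[c,u_0]$ one has $F^n(v)\le F(u_0)^n$, which decays exponentially against the polynomial growth of $a_n^{|\beta|}$, so that piece vanishes. The crux is the middle range $[u_0,a_n]$: writing $v = a_n u$ with $u\le 1$, the lower Potter bound gives $-\log F(a_n u) \ge (1-\delta)\{-\log F(a_n)\}\,u^{-\alpha_0+\delta}$, hence $F^n(v) \le \exp\{-(1-\delta)^2 u^{-\alpha_0+\delta}\}$ for $n$ large; after the substitution the prefactor cancels exactly and one is left with $|\beta|\int_0^1 \exp\{-(1-\delta)^2 u^{-\alpha_0+\delta}\}\,u^{\beta-1}\,du$, which is finite because (with $\delta < \alpha_0$) the super-exponential factor dominates the $u^{\beta-1}$ singularity at the origin.

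The main obstacle is precisely this $\beta<0$ middle range: one must show that even though $\Exp[Y^\beta]$ is effectively rescaled by the diverging factor $a_n^{|\beta|}$, the distribution function $F^n$ of the un-normalized maximum decays on $[u_0,a_n]$ in a Weibull-type fashion $\exp\{-\mathrm{const}\cdot(v/a_n)^{-\alpha_0+\delta}\}$ fast enough to compensate, which is exactly what a one-sided Potter bound on the regularly varying $-\log F$ delivers. The only secondary nuisance is that Potter bounds require both arguments large, which forces the separate treatment of the fixed compact range $[c,u_0]$, where one instead exploits genuine exponential decay of $F(u_0)^n$ against polynomial growth of $a_n^{|\beta|}$.
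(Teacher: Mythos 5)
Your proposal is correct and follows essentially the same route as the paper's proof of Lemma~\ref{lem:blockmoments}: a layer-cake representation, a case split on the sign of $\beta$, Potter bounds for the regularly varying function $-\log F$ (the paper uses $1-F$ in the positive case, which is equivalent), and the Weibull-type bound $\exp\{-\mathrm{const}\cdot u^{-\alpha_0+\delta}\}$ on the middle range for $\beta<0$. The only cosmetic difference is how the range below the Potter threshold is handled — the paper extends the bound down to $c/a_n$ by monotonicity of $-\log F$, while you kill the fixed range $[c,u_0]$ via the exponential decay of $F(u_0)^n$ against the polynomial growth of $a_n^{|\beta|}$ — both of which work.
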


\begin{proof}[Proof of Lemma~\ref{lem:blockmoments}]
Since the case $\beta = 0$ is trivial, there are two cases to be considered: $\beta \in (-\infty, 0)$ and $\beta \in (0, \alpha_0)$. Write
$
  Z_n = (M_n \vee c) / a_n
$
and note that
\[
  \Pr[ Z_n < y ] = \Pr[ M_n \vee c < a_n y ] = F^n(a_n y) \, \1_{(c/a_n, \infty)}(y).
\]

\textit{Case $\beta \in (-\infty, 0)$.} 
We have
\begin{align*}
  \Exp [ Z_n^\beta ]
  = \int_0^\infty \Pr[ Z_n^\beta > x ] \, dx 
  = \int_0^\infty \Pr[ Z_n < x^{1/\beta} ] \, dx 
  &= \int_0^\infty \Pr[ Z_n < y ] \, \abs{\beta} \, y^{\beta - 1} \, dy \\
  &= \int_{c/a_n}^\infty F^n(a_n y) \, \abs{\beta} \, y^{\beta - 1} \, dy.
\end{align*}
We split the integration domain in two pieces. For $y \in (1, \infty)$, the integrand is bounded by $\abs{\beta} \, y^{\beta - 1}$, which integrates to unity. Hence we only need to consider the integral over $y \in (c/a_n, 1]$.
We have
\begin{align*}
  F^n(a_n y)
  &= \exp \{ n \log F(a_n y) \} 
  = \exp \left( - n \{ - \log F(a_n ) \} \, \frac{ - \log F(a_n y) }{  - \log F(a_n ) } \right).
\end{align*}
Fix $\delta \in (0, \alpha_0)$. By \eqref{eq:scaling}, we have $n \{ - \log F(a_n) \} \ge 1 - \delta$ for all $n$ larger than some $n(\delta)$. By Potter's theorem (\citealp{BGT87}, Theorem 1.5.6), there exists $x(\delta) > 0$ such that, for all $n$ such that $a_n \ge x(\delta)$ and for all $y \in (x(\delta) / a_n, 1]$,
\[
  \frac{- \log F(a_n)}{- \log F(a_n y)} \le (1 + \delta) \, y^{\alpha_0 - \delta}.
\]
Without loss of generality, assume $x(\delta) > c$. For all $y \in ( c / a_n, \, x(\delta) / a_n ]$, we have
\begin{align*}
  \frac{- \log F(a_n)}{ - \log F(a_n y)} 
  &\le \frac{- \log F(a_n)}{ - \log F(x(\delta))} 
  \le (1 + \delta) \, ( x(\delta) / a_n )^{\alpha_0 - \delta} 
  \le (1 + \delta) \, x(\delta)^{\alpha_0 - \delta} c^{\delta-\alpha_0} \, y^{\alpha_0 - \delta}.
\end{align*}
Combining the previous two displays, we see that there exists a constant $c(\delta) > 0$ such that
\[
  \frac{-\log F(a_n y)}{ - \log F(a_n)} \ge c(\delta) \, y^{-\alpha_0 + \delta}
\]
for all $y \in (c/a_n, 1]$.
We conclude that, for all sufficiently large $n$ and all $y \in (c/a_n, 1]$,
\[
  F^n(a_n y) \le \exp \left( - c(\delta) \, y^{-\alpha_0 + \delta} \right),
\]
where $c(\delta)$ is a positive constant, possibly different from the one in the previous equation. For such $n$, we have
\[
  \int_{c/a_n}^1 F^n(a_n y) \, \abs{\beta} \, y^{\beta - 1} \, dy
  \le \int_0^1 \exp \left( - c(\delta) \, y^{-\alpha_0 + \delta} \right) \, \abs{\beta} \, y^{\beta - 1} \, dy
  < \infty.
\]

\textit{Case $\beta \in (0, \alpha_0)$.}
Let $\delta > 0$ be sufficiently small such that $\beta + \delta < \alpha$. Let $x(\delta) > 0$ be as in Potter's theorem. Let $n(\delta)$ be sufficiently large such that $a_n \ge x(\delta) \vee c$ for all $n \ge n(\delta)$. Put $K = \sup_{n \ge 1} n \{  1- F(a_n) \}$, which is finite by \eqref{eq:scaling} and the fact that $-\log F(x) \sim 1-F(x)$ for $x\to\infty$. For $n \ge n(\delta)$, we have
\begin{align*}
  \Exp[ Z_n^\beta ]
  = \int_0^\infty \Pr[ Z_n > x^{1/\beta} ] \, dx 
  &= \int_0^\infty \Pr[ M_n \vee c > a_n x^{1/\beta} ] \, dx \\
  &\le 1 + \int_1^\infty \Pr[ M_n > a_n x^{1/\beta} ] \, dx \\
  &\le 1 + \int_1^\infty n \{ 1 - F(a_n x^{1/\beta}) \} \, dx \\
  &\le 1 + K \int_1^\infty \frac{1 - F(a_n x^{1/\beta})}{1 - F(a_n)} \, dx.
\end{align*}
By Potter's theorem, the integral on the last line is bounded by
\[
  (1 + \delta) \int_c^\infty (x^{1/\beta})^{-\alpha_0 + \delta} \, dx.
\]
The latter integral is finite, since $(-\alpha_0 + \delta)/\beta < -1$.
\end{proof}

\section{Proofs for Section~\ref{sec:ex}}
 \label{subsec:proofs:ex}


\begin{proof}[Proof of Lemma~\ref{lem:movmax}] We only give a sketch proof for the case $p=2$, the general case being similar, but notationally more involved. 
Set $b_1=b$ and $b_2=1-b$, so that $b_{(2)}=b\vee (1-b)$. Clearly, 
\begin{align*}
\Pr( M_n \le x) 
&= 
\Pr\{ Z_0 \le x(1-b)^{-1}, Z_1 \le x b_{(2)}^{-1}, \dots, Z_{n-1}\le xb_{(2)}^{-1}, Z_n \le xb^{-1}\} \\
&=
F( x(1-b)^{-1} ) \cdot F ( xb^{-1} )  \cdot F^{n-1}(xb_{(2)}^{-1}).
\end{align*}
As a consequence, with $b_{(1)}=b \wedge (1-b)$,
\begin{align*} 
H_n(x) 
&=
\Pr( M_n \le x b_{(2)} a_n )\nonumber  \\
&= F( a_nx \tfrac{b_{(2)}}{1-b} )\cdot  F(  a_nx \tfrac{b_{(2)}}{b} ) \cdot 
F^{n-1} (a_nx)\nonumber \\
&=
F( a_n x \tfrac{b_{(2)}}{b_{(1)}} ) \cdot F^n(a_nx).
\end{align*}
Since, by assumption, $F^n(xa_n)\to \exp(-x^{-\alpha_0})$, Condition~\ref{cond:DA} is satisfied.

Condition~\ref{cond:alpha} is trivial, since the process is $p$-dependent.

The proof of Condition~\ref{cond:moment} can be be carried out along the lines of the proof of Lemma~\ref{lem:blockmoments}.
For $\beta<0$, simply use that 
\begin{align*}
\Pr\{ (M_n\vee c)/\sigma_n \le x \} 
&=
H_n(x) \, \ind (x \ge c/\sigma_n)
\le 
F^n(xa_n) \, \ind (x \ge c/\sigma_n),
\end{align*}
while, for $\beta>0$,
\[
\Pr(M_n>\sigma_n x^{1/\beta} )  \le 2 n \cdot \Pr( Z_1 > \sigma_n  x^{1/\beta} b_{(2)}) 
=
2n\{ 1- F(a_nx^{1/\beta}) \},
\]
for any $x>1$.

Since $\log k_n = o(r_n)$, Condition~\ref{cond:small} follows from 
\begin{align*}
\Pr[ \min( M_{r_n,1}, \ldots, M_{r_n, k_n} ) \le c ] 
&\le k_n \Pr(M_{r_n} \le c) \\
& = \exp\{ \log k_n + (r_n-1) \log F(c b_{(2)}^{-1}) \} \cdot F( c(1-b)^{-1} ) \cdot F ( cb^{-1} ) .
\end{align*}

Finally, consider Condition~\ref{cond:bias}. As in the proof of Theorem~\ref{theo:ml}, write
\[
\sqrt{k_n} \left( \Exp\bigl[ f \bigl( (M_{r_n} \vee c) / \sigma_{r_n} \bigr)\bigr] - P f \right)
=
-\int_0^\infty \sqrt k_n \{ \tilde H_n(y)  - G(y) \} f'(y) \, dy,
\]
where $G(y) = \exp(-y^{-\alpha_0})$ and where 
\[
\tilde H_n(y) = \Prob\{ (M_{r_n} \vee c) / \sigma_{r_n}  \le y \} 
= A_n(y) G_n(y) 
\]
with 
\[
A_n(y) = F( a_{r_n} y\tfrac{b_{(2)}}{b_{(1)}} ), 
\qquad
G_n(y) = F^{r_n}(ya_{r_n}) \ind (y \ge c/\sigma_{r_n}).
\]
Write
\begin{multline} \label{eq:biasint}
\int_0^\infty \sqrt k_n \{ \tilde H_n(y)   - G(y) \} f'(y) \, dy
= 
- \int_0^{c/\sigma_{r_n}} \sqrt k_n G(y)  f'(y) \, dy  \\ 
+ \int_{c/\sigma_{r_n}}^\infty \sqrt k_n A_n(y) \{ G_n (y) - G(y) \} f'(y) \, dy  \\
+\int_{c/\sigma_{r_n}}^\infty \sqrt k_n \{A_n (y)  - 1 \}  G(y) f'(y) \, dy.
\end{multline}
The first integral converges to $0$ as shown in the proof of Theorem~\ref{theo:ml}, treatment of $J_{n1}(f)$.
The integrand of the second integral converges pointwise to the same limit as in the iid case. The integrand can further be bounded by an integrable function as shown in the treatment of  $J_{n2}$ in the proof of Theorem~\ref{theo:ml}, after splitting the integration domain at $1$. Hence, the limit of that integral is the same as in the iid case by dominated convergence.

Consider the last integral in the latter display.  Decompose
\[
\sqrt {k_n} | A_n(y) - 1 | = 
\frac{\sqrt{k_n}}{r_n}\cdot \frac{1-A_n(y)}{-\log A_n(y)}   \cdot  \frac{-\log A_n(y)}{-\log F(a_{r_n})} ,
\]
where we used the fact that $r_n\{-\log F(a_{r_n}) \}= 1$.
The second factor is bounded by $1$, since $\log(x) \le x-1$ for all $x>0$. Consider the third factor. With $L(x) = - \log \{F(x) \} x^{\alpha_0}$, we have
\[
\frac{-\log A_n(y)}{-\log F(a_{r_n})}
=
(yb_{(2)}/b_{(1)})^{-\alpha_0} \frac{L(a_{r_n}y b_{(2)}/b_{(1) } )}{L(a_{r_n})}.
\]
The fraction on the right-hand side is bounded by a multiple of $y^\delta \vee y^{-\delta}$ by Potter's theorem, for some $0<\delta<\alpha_0$. Further note that, up to a factor, $f'(y) \le y^{-\alpha_0-\delta-1}$ for $y\le 1$ and $f'(y) \le y^{-1}$ for $y>1$. We obtain that the integrand of the third integral on the right-hand side of  \eqref{eq:biasint} is bounded by a multiple of
\[
\sqrt k_n /r_n \cdot \exp(-y^{-\alpha_0}) y^{-2\alpha_0-2\delta-1}
\]
for $y\le 1$ and by a multiple of
\[
\sqrt k_n /r_n \cdot  y^{-\alpha_0-1+\delta}
\]
for $y>1$. Both functions are integrable on its respective domains. Since $k_n=o(n^{2/3})$ is equivalent to $\sqrt k_n=o(r_n)$, the third integral converges to $0$. Hence, Condition~\ref{cond:bias} is satisfied.
\end{proof}

\section{Proofs for Section~\ref{sec:aux}}
\label{sec:proofs:aux}

\begin{proof}[Proof of Lemma~\ref{lem:moments}]
If $Y$ is a unit exponential random variable, then the law of $Y^{-1/\alpha_0}$ is equal to $P$. The integrals stated in the lemma are equal to $\Exp[Y^{\alpha/\alpha_0}]$, $-\alpha_0^{-1} \, \Exp[Y^{\alpha/\alpha_0} \log(Y)]$, and $\alpha_0^{-2} \, \Exp[Y^{\alpha/\alpha_0} (\log Y)^2]$, respectively.
First,
\begin{align*}
  \int_0^\infty x^{-\alpha} \, \diff P(x)
  &= \int_0^\infty y^{\alpha/\alpha_0} \, \exp(-y) \, \diff y 
  = \Gamma(1 + \alpha/\alpha_0).
\end{align*}
Second,
\begin{align*}
  \int_0^\infty x^{-\alpha} \log(x) \, \diff P(x)
  &= -\frac{1}{\alpha_0} \int_0^\infty \log(y) \, y^{\alpha/\alpha_0} \, \exp(-y) \, \diff y 
  = -\frac{1}{\alpha_0} \Gamma'(1 + \alpha/\alpha_0).
\end{align*}
Third,
\begin{align*}
  \int_0^\infty x^{-\alpha} (\log x)^2 \, \diff P(x)
  &= \frac{1}{\alpha_0^2} \int_0^\infty (\log y)^2 \, y^{\alpha/\alpha_0} \, \exp(-y) \, \diff y 
  = \frac{1}{\alpha_0^2} \, \Gamma''(1 + \alpha/\alpha_0). \qedhere
\end{align*}
\end{proof}


\begin{proof}[Proof of Lemma~\ref{lem:cov}]
Recall a few special values of the first two derivatives of the Gamma function:
\begin{align*}
  \Gamma'(1) &= -\gamma, &
  \Gamma''(1) &= \gamma^2 + \pi^2/6, \\
  \Gamma'(2) &= 1-\gamma, &
  \Gamma''(2) &= (1-\gamma)^2 + \pi^2/6 - 1, \\
  \Gamma'(3) &= 3-2\gamma, &
  \Gamma''(3) &= 2((3/2-\gamma)^2 + \pi^2/6 - 5/4).
\end{align*}
Applying the formulas in Lemma~\ref{lem:moments} with $\alpha \in \{ 0, \alpha_0, 2 \alpha_0 \}$, we find
\begin{align*}
  \var( Y_1 ) &= \alpha_0^{-2} \, \bigl\{ \Gamma''(3) - (\Gamma'(2))^2 \bigr\} =  \alpha_0^{-2} (1-4\gamma+\gamma^2+\pi^2/3)  , \\
  \var( Y_2 ) &= \Gamma(3) - (\Gamma(2))^2 = 1, \\
  \var( Y_3 ) &= \alpha_0^{-2} \, \bigl( \Gamma''(1) - (\Gamma'(1))^2 \bigr) = \alpha_0^{-2} \pi^2/6,
\end{align*}
as well as
\begin{align*}
  \cov( Y_1, Y_2 ) &= \alpha_0^{-1} \, \bigl( (-\Gamma'(3)) - (-\Gamma'(2)) \Gamma(2) \bigr) = \alpha_0^{-1} \, (\gamma - 2), \\
  \cov( Y_1, Y_3 ) &= \alpha_0^{-2} \, \bigl( \Gamma''(2) - (-\Gamma'(2)) (-\Gamma'(1)) \bigr) = \alpha_0^{-2} (\pi^2/6 - \gamma), \\
  \cov( Y_2, Y_3 ) &= \alpha_0^{-1} \, \bigl( (-\Gamma'(2)) - \Gamma(2) (-\Gamma'(1)) \bigr) = -\alpha_0^{-1}. \qedhere
\end{align*}
\end{proof}


\begin{proof}[Proof of Lemma~\ref{lem:fisher}]
 If $X\sim P_{(\alpha,\sigma)}$, then $Z=X/\sigma \sim P_{(\alpha,1)}$. Therefore, by \eqref{eq:score:alpha} and  Lemma~\ref{lem:moments},
\begin{align*}
\iota_{11} &= \Exp\big[\{ \alpha^{-1} + (Z^{-\alpha} - 1) \log Z \}^2 \big] \\
&= 
\frac{1}{\alpha^2}\big[ 1 - 2 \{ \Gamma'(2) - \Gamma'(1) \} + \{ \Gamma''(3) - 2 \Gamma''(2) + \Gamma''(1) \} \big]  \\
&= 
\frac{1}{\alpha^2} \{ (1-\gamma)^2 + \pi^2/6\}.
\end{align*}
Similarly, by \eqref{eq:score:alpha} and  \eqref{eq:score:sigma}, 
\begin{align*}
\iota_{12} 
&= 
\frac{\alpha}{\sigma} \Exp\big[(1-Z^{-\alpha}) \{ \alpha^{-1} + (Z^{-\alpha} - 1) \log Z \} \big] \\
&=  
\frac{\alpha}{\sigma} \left[ \alpha^{-1} \{ \Gamma(1) - \Gamma(2) \} + \alpha^{-1} \{ \Gamma'(1) - 2\Gamma'(2) + \Gamma'(3) \} \right\}  \\
&= 
\frac{1-\gamma}{\sigma}. 
\end{align*}
Finally, 
\begin{align*}
\iota_{22} 
&= 
\frac{\alpha^2}{\sigma^2} \Exp[(1-Z^{-\alpha})^2 ]  
= 
\frac{\alpha^2}{\sigma^2} \{ \Gamma(1)-2\Gamma(2)+\Gamma(3)\} 
=
\frac{\alpha^2}{\sigma^2}. \qedhere
\end{align*}
\end{proof}

\section{Finite-sample bias and variance}
\label{sec:simulextra}

We work out the second-order Condition~\ref{cond:secor} and the expressions for the asymptotic bias and variance of the maximum likelihood estimator of the Fr\'echet shape parameter for the case of block maxima extracted from an independent random sample from the absolute value of a Cauchy distribution. Furthermore, we compare these expressions to those obtained in finite samples from Monte Carlo simulations.

If the random variable $\xi$ is Cauchy-distributed, then $\lvert\xi\rvert$ has distribution function 
\[
  F(x) 
  = \Prob\lbrace|\xi| \le x\rbrace 
  = \frac2\pi \arctan(x) \, \mathds{1}(x>0),
  \qquad x \in \R.
\]
Based on the asymptotic expansion
\[
  - \log \bigl( \arctan (x) \bigr) 
  =  
  \log \Bigl( \frac2\pi \Bigl) + \frac{2}{\pi x } + \frac{2}{\pi^2 x^2} + O\Bigl(\frac1{x^3}\Bigr), \qquad x \to \infty,
\]
one can show that $-\log F$ is regularly varying at infinity with index $-\alpha_0 = -1$ and that the limit relation
\begin{equation*}
  \lim_{u \to \infty} 
  \frac{1}{A(u)} 
  \left( 
    \frac{-\log F(ux)}{-\log F(u)} - x^{-1} 
  \right) 
  = 
  x^{-1} \, h_\rho(x)
\end{equation*}
is satisfied for 
\[
\rho=-1 \quad \text{ and } \quad  A(u) = - \frac{1}{1+\pi u}.
\]
In addition, the normalizing sequence $(a_n)_{n \in \mathbb{N}}$ can be chosen as $a_n=\frac{2n}{\pi}$.

By Theorem~\ref{theo:ml}, these facts imply that the theoretical bias and variance of $\hat \alpha_n$ are given by
\begin{align*}
  \operatorname{Bias} 
  &= -A(a_{r_n}) \frac{6}{\pi^2} b_1(|\rho|) =  \frac{12}{\pi^2(1+2r_n)}, & \operatorname{Variance}
  &= \frac{1}{k_n} \frac{\pi^2}{6}.
\end{align*}
In particular, the mean squared error is of the order $O(1/r_n^2)+O(1/k_n)$, which can be minimized by balancing the block size $r_n$ and the number of blocks $k_n$, that is, by choosing $r_n=O(n^{1/3})$ and $k_n=O(n^{2/3})$ so that $r_n^2\approx k_n$. More precisely, the equations $n = kr$ and $(\frac{12}{\pi^2(1+2r)})^2 = \frac{1}{k} \frac{\pi^2}{6}$ imply that $\frac{864}{\pi^6} n = r(1+2r)^2$, which for $n=1\,000$ implies that $r \approx 6$ and $k \approx 174$. These values are quite close to the optimal finite-sample values of $r=4$ and $k=250$ to be observed in the upper-left panel of Figure~\ref{fig:mse}.

\begin{figure}
\centering
\vspace{-0.8cm}
\includegraphics[width=0.48\textwidth]{./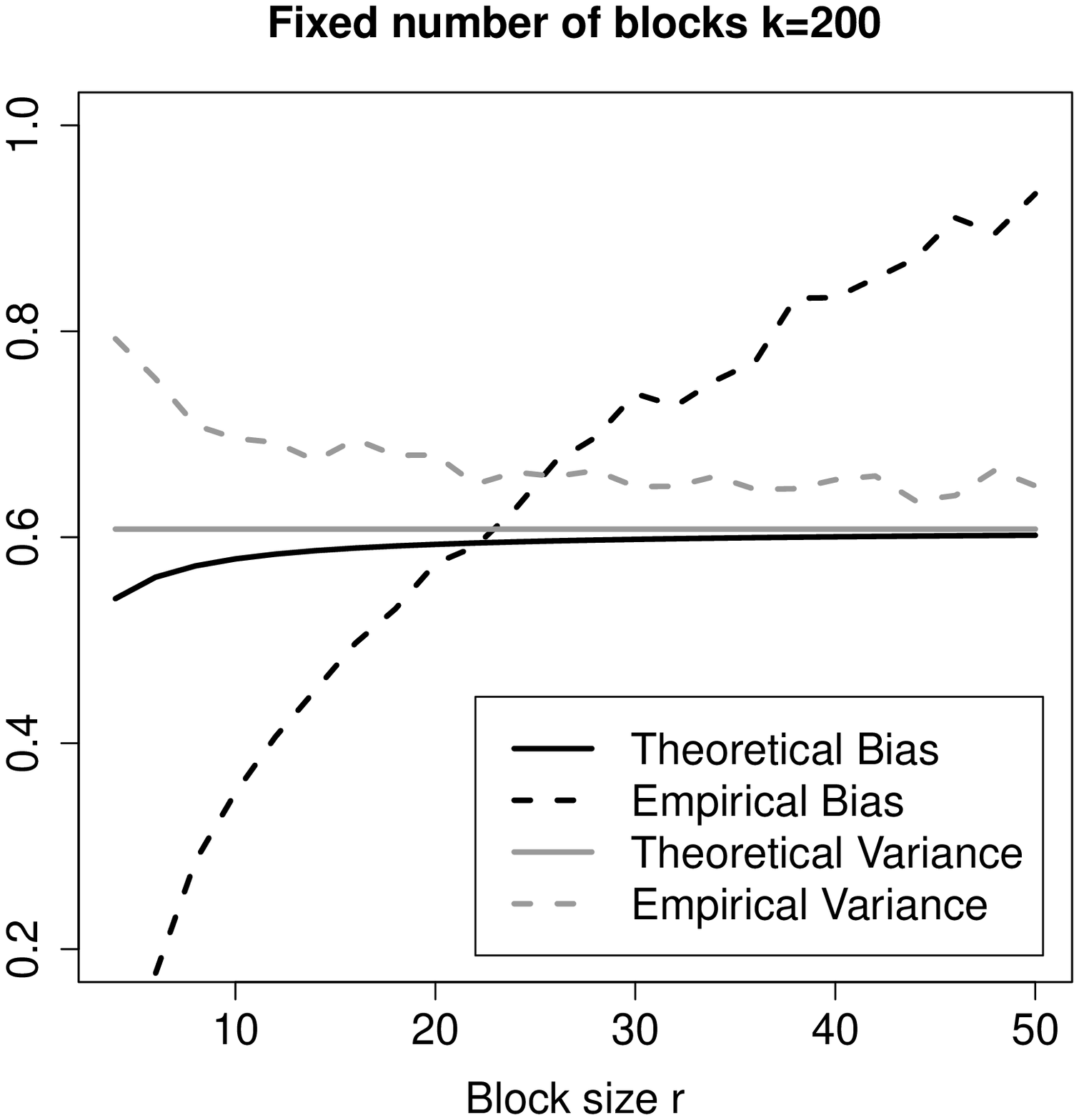}
\includegraphics[width=0.48\textwidth]{./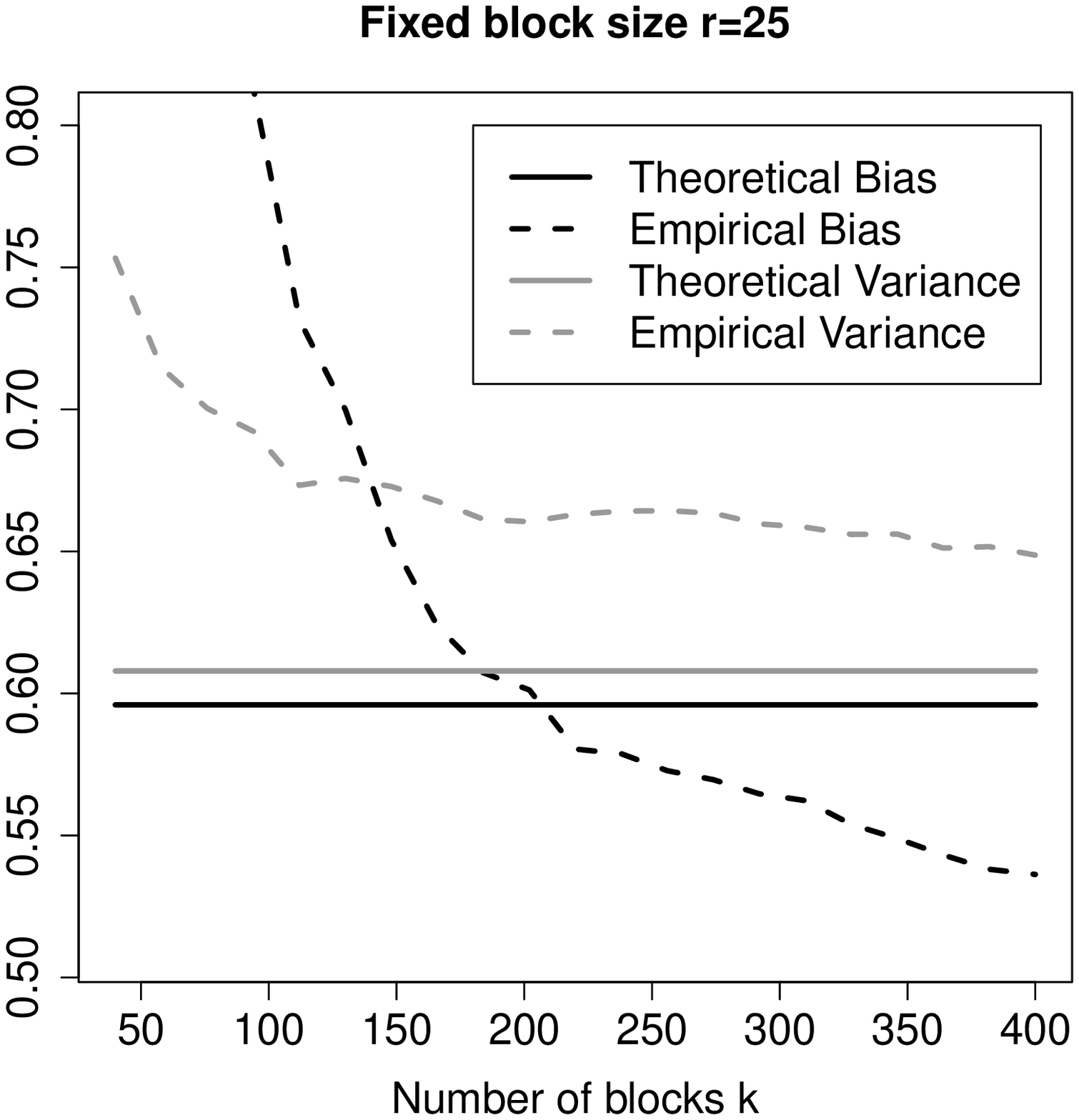}
\includegraphics[width=0.48\textwidth]{./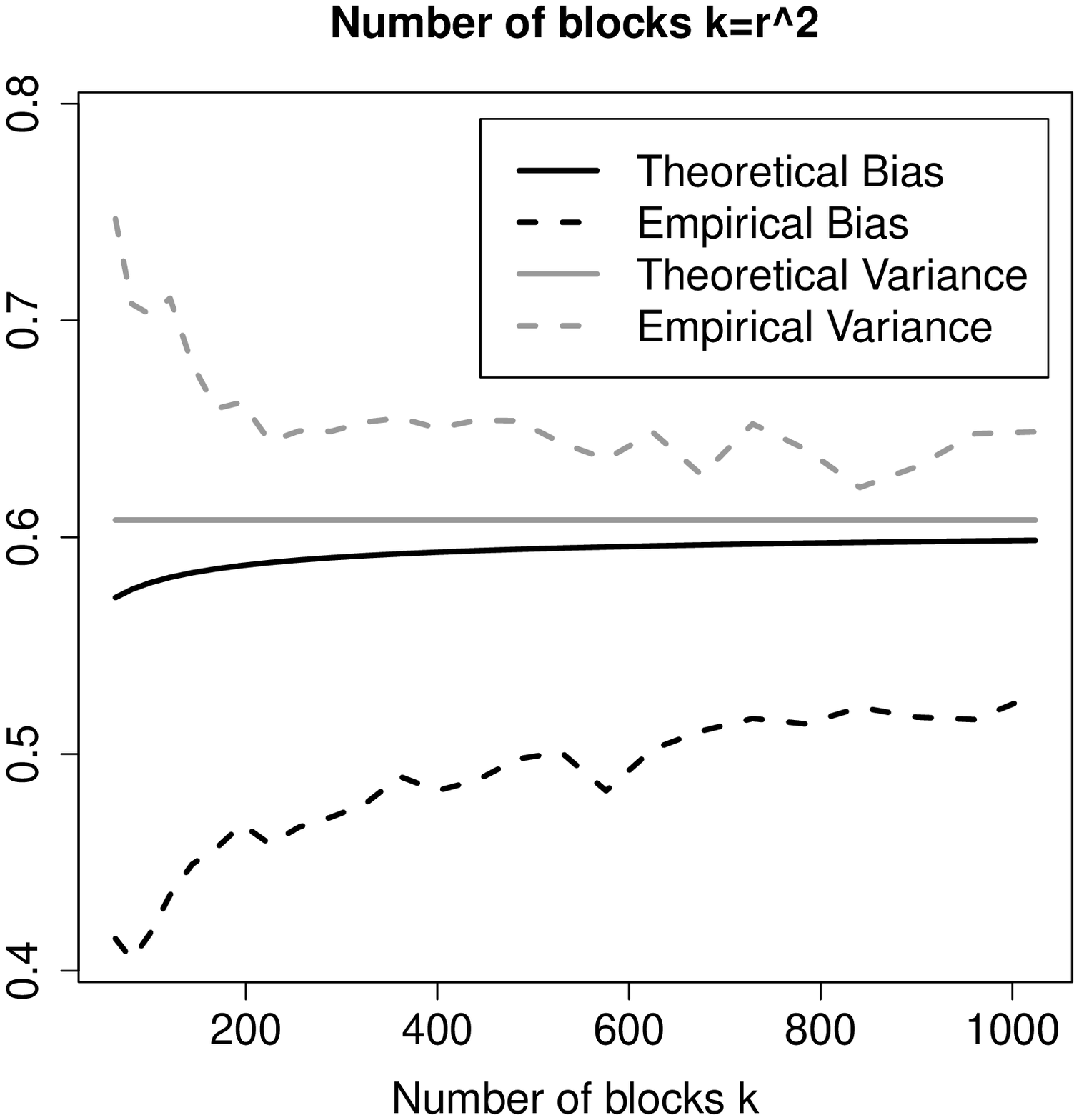}
\caption{Simulation results in the iid Cauchy-model (see Section~\ref{sec:simulextra}). Theoretical bias multiplied with $r$ and theoretical variance multiplied with $k$, together with finite-sample approximations based on $N=5\,000$ simulation runs. In the upper left picture, the number of blocks is fixed at $k=200$; in the upper right picture, the size of the blocks is fixed at $r=25$; in the lower picture, finally, the number of blocks $k$ and the block size $r$ satisfy $r^2=k$, as suggested by (approximately) minimizing the mean squared error. }
\label{fig:biasvarapprox}
\end{figure}

In Figure~\ref{fig:biasvarapprox}, we depict results of a Monte-Carlo simulation study on the finite-sample approximation of the theoretical bias, multiplied by $r$, and of the theoretical variance, multiplied by $k$. Three scenarios have been considered: 
\begin{itemize}
\item fixed number of blocks $k=200$ and block sizes $r= 4,\dots, 50$;
\item fixed block size $r=25$ and number of blocks $k=40, \dots, 400$;
\item block sizes $r=8,9, \dots, 32$ and number of blocks $k=r^2$.
\end{itemize}

We find that the variance approximation improves with increasing $r$ or $k$. For the bias approximation to improve, both $r$ and $k$ must increase.

\end{document}